\tikzset{%
	symbol/.style={%
		draw=none,
		every to/.append style={%
			edge node={node [sloped, allow upside down, auto=false]{$#1$}}}
	}
}
\theoremstyle{theorem}
\newtheorem{theorem}{Theorem}
\numberwithin{theorem}{subsection}
\newtheorem{lemma}[theorem]{Lemma}
\newtheorem{proposition}[theorem]{Proposition}
\newtheorem{corollary}[theorem]{Corollary}
\theoremstyle{conjecture}
\theoremstyle{definition}
\newtheorem{definition}[theorem]{Definition}
\newtheorem{example}[theorem]{Example}
\theoremstyle{remark}
\newtheorem{remark}[theorem]{Remark}
\theoremstyle{diagram}
\theoremstyle{notation}
\newtheorem{notation}[theorem]{Notation}
\title{Enriched Kleisli objects for pseudomonads}
\author{Adrian Miranda}
\thanks{This material is based upon work done for my PhD thesis. I am grateful for having been supported by the MQRES PhD Scholarship, 20192497, and by EPSRC under grant EP/V002325/2. I thank Steve Lack for his guidance while I was conducting this research, and Nicola Gambino for his guidance while I was preparing this paper. 2020 Mathematics subject classification 18N15. Keywords: pseudomonads, Kleisli constructions, enriched category theory, $2$-categories, presentations, projective cofibrancy.}
\address{Department of Mathematics, University of Manchester, 
	United Kingdom}
\email{adrian.miranda@manchester.ac.uk}
\begin{document}
	
	\maketitle
	
	\begin{abstract}
		\noindent A pseudomonad on a $2$-category whose underlying endomorphism is a $2$-functor can be seen as a diagram $\mathbf{Psmnd} \rightarrow \mathbf{Gray}$ for which weighted limits and colimits can be considered. The $2$-category of pseudoalgebras, pseudomorphisms and $2$-cells is such a $\mathbf{Gray}$-enriched weighted limit \cite{Coherent Approach to Pseudomonads}, however neither the Kleisli bicategory nor the $2$-category of free pseudoalgebras are the analogous weighted colimit \cite{Formal Theory of Pseudomonads}. In this paper we describe the actual weighted colimit via a presentation, and show that the comparison $2$-functor induced by any other pseudoadjunction splitting the original pseudomonad is bi-fully faithful. As a consequence, we see that biessential surjectivity on objects characterises left pseudoadjoints whose codomains have an `up to biequivalence' version of the universal property for Kleisli objects. This motivates a homotopical study of Kleisli objects for pseudomonads, and to this end we show that the weight for Kleisli objects is cofibrant in the projective model structure on $[\mathbf{Psmnd}^\text{op}, \mathbf{Gray}]$.
	\end{abstract}
	
	\onehalfspacing

\tableofcontents

\section{Introduction}\label{Introduction}

\subsection{Background and Motivation}

\noindent Many structures in mathematics can be expressed as algebras for suitable monads $\left(S, \eta, \mu\right)$ on suitable base categories $A$. Such algebras can be described via presentations, which view them as quotients of the easier to understand free algebras $\left(SX, \mu_{X}\right)$ \cite{Beck's Theorem For Pseudomonads}. The category of free algebras is also equivalent to a category whose objects are those of the base $A$, whose morphisms are of the form $f\in A\left(X, SY\right)$, and whose identities and composition are given in terms of $\eta$ and $\mu$ \cite{Kleisli standard construction induced by a pair of adjoint functors}. In this way, monads model not just algebras but also `maps with generalised outputs'. The category of algebras, or Eilenberg-Moore category \cite{Eilenberg Moore}, is a $\mathbf{Cat}$ enriched weighted limit of the data in the original monad, while the category of maps with generalised outputs, or Kleisli category, is the analogous weighted colimit \cite{Formal Theory of Monads}.
\\
\\
\noindent Pseudomonads generalise monads by allowing the usual monad axioms to hold up to invertible modifications which are subject to further axioms (Section 8 of \cite{Doctrines fully faithful adjoint string}). There are several Kleisli like constructions for pseudomonads that have been considered in the literature. Of these, the constructions of greatest importance seem to be the following.

\begin{itemize}
	\item The $2$-category of free pseudoalgebras. Once again, general pseudoalgebras can be expressed as certain colimits of free ones \cite{Beck's Theorem For Pseudomonads}.
	\item The bicategory of Kleisli arrows $f: X \rightarrow SY$. This construction has received much attention in the literature due to its myriad applications such as to bisimulation \cite{bisimulation}, topos theory \cite{Niefield Exponentiability}, universal algebra \cite{Hyland elements of a theory of algebraic theories}, and combinatorial species \cite{Fiore Gambina Hyland Winskel generalised species} \cite{Gambino Garner Vasilakopoulou}. We recall it in Subsection \ref{section Kleisli Bicategory}.
\end{itemize}
\noindent However, these both fail to have a purely $\mathbf{Gray}$-enriched universal property. The $\mathbf{Gray}$-enriched notion has been considered in Remark 4.7 and the discussion preceding it in \cite{Formal Theory of Pseudomonads}, and it was also mentioned in the discussion following Theorem 5.3 of \cite{Coherent Approach to Pseudomonads}. Since $\mathbf{Gray}$ is cocomplete when considered as category enriched over itself (3.74 of \cite{Kelly Basic Concepts of Enriched Category Theory}), a $2$-category having the enriched universal property must exist. Section \ref{Section definition and presentation of Gray enriched Kleisli objects for pseudomonads} recalls the weight defining the $\mathbf{Gray}$-enriched Kleisli construction and the corresponding presentation for the actual colimit. This presentation is a categorification of a description of the Kleisli category of a monad which may seem unfamiliar even in the one-dimensional setting, and which we describe in Subsection \ref{Subsection Kleisli via formal algebra structures}.
\\
\\
\noindent Since $2$-categories of free pseudoalgebras, and Kleisli bicategories, seem more useful than $\mathbf{Gray}$-enriched Kleisli objects which have a simpler universal property, an understanding of the relationship between the various constructions is of interest. As Theorem \ref{comparison from Gray Kleisli is bi-fully-faithful} establishes, the comparison from the $\mathbf{Gray}$-enriched Kleisli object to any other pseudoadjunction splitting is bi-fully-faithful. Hence left pseudoadjoints whose targets have an `up to biequivalence' version of the universal property are characterised by the property of being biessentially surjective on objects. The property of biessential surjectivity on objects is closed under composition, and closure under composition is a key ingredient in the development of free cocompletions under Kleisli objects for ordinary monads \cite{FTM2}. This motivates homotopical perspectives on Kleisli objects for pseudomonads, and Section \ref{Section Homotopy Theoretic Aspects} is devoted to establishing useful homotopical properties. These directions will be pursued further after the theory of tricategorical limits and colimits is developed in a forthcoming paper \cite{Miranda Tricategorical Limits and Colimits}. The aim of the present paper is to collect certain useful facts about the weight and about the $\mathbf{Gray}$-enriched Kleisli object itself.

\subsection{Outline of the paper}

\noindent The main results of this paper are Theorem \ref{when is comparison from gray kleisli a biequivalence} and Corollary \ref{TriKL weight is cofibrant}. These follow easily from the results preceding them; Theorem \ref{comparison from Gray Kleisli is bi-fully-faithful} and Proposition \ref{Pseudoalgebras via cofibrant weighted limits} respectively. An outline of the content of each Section of this paper is given below, and a summary of the key ideas and techniques used in this paper follows in Subsection \ref{Subsection Key ideas and techniques}. 

\begin{itemize}
	\item Subsection \ref{Subsection free pseudoalgebras} reviews $2$-categories of pseudoalgebras for pseudomonads and Subsection \ref{section Kleisli Bicategory} reviews Kleisli bicategories.
	\item Section \ref{Section weights and presentations} derives presentations for Kleisli objects from their universal properties as weighted colimits. Subsection \ref{Subsection Kleisli via formal algebra structures} describes the Kleisli category of a monad via the presentation inherited from its universal property. Subsection \ref{Section definition and presentation of Gray enriched Kleisli objects for pseudomonads} recalls the definitions of pseudomonads and pseudoadjunctions, and the weight for Kleisli objects. Subsection \ref{Subsection the presentation of the Kleisli object of a pseudomonad} gives a presentation for the Kleisli object $\mathcal{A}_{S}$ of a pseudomonad $\left(\mathcal{A}, S\right)$. In particular, the universal cocone is described in Remark \ref{Gray Kleilsi's universal cocone as a weighted colimit}. This is simplified by the coherence results for pseudomonads established in \cite{Coherent Approach to Pseudomonads}.
	\item Section \ref{Section the canonical comparison induced by a pseudoadjunction splitting the pseudomonad} studies canonical comparison $2$-functors induced out of Kleisli objects $\mathcal{A}_{S}$ by pseudoadjunctions splitting $\left(\mathcal{A}, S\right)$. Subsection \ref{Subsection comparison to pseudoalgebras} describes the canonical comparison $2$-functor from the Kleisli object $\mathcal{A}_{S}$ to the Eilenberg-Moore object. An explicit biequivalence inverse to a factorisation of the comparison is given in Appendix \ref{Appendix explicit inverse to comparison}. Subsection \ref{Subsection bi-fully-faithfulness} establishes bi-fully faithfulness (Theorem \ref{comparison from Gray Kleisli is bi-fully-faithful}), and characterises when such comparisons are weak equivalences of $2$-categories in the sense of \cite{Quillen 2-cat}, here called \emph{weak biequivalences} (Definition \ref{definition weak biequivalence}, Theorem \ref{when is comparison from gray kleisli a biequivalence}). Specifically, this occurs if and only if the left pseudoadjoint is biessentially surjective on objects. Subsection \ref{Subsection Applications} applies these results to simplify the description of $2$-cells in $\mathcal{A}_{S}$ (Corollary \ref{2-cells in A_S are 2-cells of free pseudoalgebras}), and to show that left pseudoadjoints whose codomains satisfy the $\mathbf{Gray}$-enriched universal property are not closed under composition (Corollary \ref{Gray Kleisli not closed under composition}).
	\item Section \ref{Section Homotopy Theoretic Aspects} works towards an understanding of Kleisli objects for pseudomonads which only satisfy a weaker universal property, using the tools of enrichment over monoidal model categories. This is motivated by examples such as Kleisli bicategories and $2$-categories of free pseudoalgebras, as well as by the better properties characterising left adjoints whose codomains satisfy these weaker universal properties. Subsection \ref{Subsection Homotopical preliminaries} reviews relevant homotopical notions, including Lack's model structure on $2$-$\mathbf{Cat}$ and projective model structures. Subsection \ref{Subsection Kleilsi weight is projective cofibrant} records that $\mathbf{Gray}$-enriched Kleisli left adjoints are cofibrations in the Lack model structure (Proposition \ref{gray kleisli left adjoint a cofibration}), and establishes projective cofibrancy of the weight for Kleisli objects (Corollary \ref{TriKL weight is cofibrant}). This will be used in the forthcoming \cite{Miranda Tricategorical Limits and Colimits} to clarify the relationship between the enriched universal property of $\mathcal{A}_{S}$, and the weaker universal properties of more familiar Kleisli constructions for pseudomonads.
\end{itemize} 

\begin{notation}\label{notation morphisms in Kleisli bicategories}
	When a morphism $f: X \rightarrow SY$ in $\mathcal{A}$ is being considered as a morphism from $X$ to $Y$ in the Kleisli bicategory, it will be denoted as $f: X \nrightarrow Y$.
\end{notation}

\subsection{Key ideas and techniques}\label{Subsection Key ideas and techniques}

\subsubsection{Enriched category theory}

\noindent The most general notion of a pseudomonad that one might consider in two-dimensional category theory would have a bicategory $\mathcal{A}$ as its base, and pseudofunctor $S: \mathcal{A} \rightsquigarrow \mathcal{A}$ as its endomorphism; a pseudomonad $\mathbb{S}$ in the tricategory $\mathbf{Bicat}$ \cite{Gurski Coherence in Three Dimensional Category Theory}. However, two-dimensional strictification $\mathbf{st}_{2}: \mathbf{Bicat} \rightsquigarrow \mathbf{Gray}$ \cite{Campbell Strictification} enables the base to be replaced with a biequivalent $2$-category $\mathbf{st}_{2}\left(\mathcal{A}\right)$ and the endomorphism to be replaced with a $2$-functor in a compatible way. Specifically, one uses cofibrancy of $\mathbf{Psmnd}$ to replace the trihomomorphism \begin{tikzcd}
	\mathbf{Psmnd} \arrow[r, "\mathbb{S}"] & \mathbf{Bicat} \arrow[r, "\mathbf{st}_{2}"] & \mathbf{Gray}
\end{tikzcd} with a trinaturally equivalent $\mathbf{Gray}$-functor (Proposition 4.1.1, \cite{Miranda strictifying operational coherences}).
\\
\\
\noindent As justified by the previous paragraph, we restrict our attention to pseudomonads whose bases are $2$-categories and whose underlying endomorphisms are $2$-functors. Such simplifications do not lose any generality up to biequivalence, and they allow us to apply the techniques of enriched category theory \cite{Kelly Basic Concepts of Enriched Category Theory} over the base symmetric monoidal closed category $\left(2\text{-}\mathbf{Cat}, \otimes, \mathbf{1}, \mathbf{Gray}\left(-, ?\right)\right)$. Here $\otimes$ denotes the $\mathbf{Gray}$-tensor product \cite{Gray Formal Category Theory} and $\mathbf{Gray}\left(\mathcal{A}, \mathcal{B}\right)$ is the $2$-category whose objects are $2$-functors from $\mathcal{A}$ to $\mathcal{B}$, morphisms are pseudonatural transformations, and $2$-cells are modifications. When being considered as a base for enrichment, or as a category enriched over itself, this structure is denoted $\mathbf{Gray}$.

\subsubsection{Presentations for weighted colimits via generators and relations}

\noindent The base $\mathbf{Gray}$ is a locally finitely presentable symmetric monoidal closed category \cite{Adamek Rosicky Locally Presentable and Accessible Categories}. As such, any weighted colimit can be described via a presentation involving generators and relations. In general, such presentations for colimits can be constructed using data that must appear in cocones. Subsection \ref{Subsection the presentation of the Kleisli object of a pseudomonad} exploits the coherence theorem for pseudomonoids and pseudoalgebras of \cite{Coherent Approach to Pseudomonads}, to describe such a presentation when the weight is that for Kleisli objects of pseudomonads. In particular, this allows a presentation involving only finitely many different types of generators and relations.

\subsubsection{Homotopy theory of $2$-categories}

\noindent As well as being locally presentable, $\mathbf{Gray}$ is also a monoidal model category when equipped with the Lack model structure \cite{Quillen 2-cat}, in which the weak equivalences are the weak biequivalences, recalled in Definition \ref{definition weak biequivalence}. As shown in Example 3.1 of \cite{Quillen 2-cat}, the biequivalence inverse of a weak biequivalence is typically a pseudofunctor rather than a morphism of $\mathbf{Gray}$. The main result of Section \ref{Section the canonical comparison induced by a pseudoadjunction splitting the pseudomonad} is a characterisation of when the comparison from the enriched Kleisli object is a weak biequivalence. This motivates a homotopical perspective on Kleisli objects for pseudomonads, which is developed in Section \ref{Section Homotopy Theoretic Aspects}.

\section{Related notions}\label{Section related notions and perspectives}

\noindent Although this paper will mostly focus on $\mathbf{Gray}$-enriched Kleisli objects for pseudomonads, there are more familiar constructions which have weaker universal properties. Subsection \ref{Subsection free pseudoalgebras} reviews the $2$-category of pseudoalgebras, including its universal property as an Eilenberg-Moore object for the pseudomonad. The full sub $2$-category on free pseudoalgebras categorifies one perspective on the ordinary Kleisli construction for monads on categories. Another perspective is categorified by the Kleisli bicategory, which we review in Subsection \ref{section Kleisli Bicategory}. As mentioned in the introduction, neither of these constructions has a $\mathbf{Gray}$-enriched universal property. Rather than $\mathbf{Gray}$-natural isomorphisms, their universal properties involve trinatural biequivalences. A study of tricategorical perspectives is deferred to the forthcoming \cite{Miranda Tricategorical Limits and Colimits}.

\subsection{Pseudoalgebras}\label{Subsection free pseudoalgebras}

\noindent We recall from section 9 of \cite{Doctrines fully faithful adjoint string} the $2$-category of pseudoalgebras $\mathcal{A}^S$ for a pseudomonad $\left(\mathcal{A}, S\right)$, and the Eilenberg Moore pseudoadjunction. The explicit descriptions of pseudoalgebras, pseudomorphisms and $2$-cells of pseudoalgebras will be helpful in understanding the constructions in Proposition \ref{Pseudoalgebras via cofibrant weighted limits}. The $2$-category structure is easily inferred from the description of these data.

\begin{definition}
\hspace{1mm}
	\begin{enumerate}
		\item A \emph{pseudoalgebra} $\left(X, \phi, u, m\right)$ consists of an object $X \in \mathcal{A}$, a morphism $\phi: SX \rightarrow X$, and two invertible $2$-cells $u: 1_{X} \Rightarrow \phi.\eta_{X}$ and $m: \phi.S\phi \Rightarrow \phi.\mu_{X}$ which satisfy the equations depicted below.
		
		$$\begin{tikzcd}[font=\fontsize{9}{6}]
			S^{3}X \arrow[dd, "\mu_{SX}"']
			\arrow[rr, "S^{2}\phi"]\arrow[rd, "S\mu_{X}"description]
			&{}\arrow[d, Rightarrow, shorten = 5, "Sm"]& S^{2}X\arrow[rd, "S\phi"]
			\\
			&S^{2}X\arrow[rr, "S\phi"]\arrow[dd, "\mu_{X}"]\arrow[d, Rightarrow, shorten = 5, shift right = 12, "\alpha_{X}"]
			&{}\arrow[dd, Rightarrow, shorten = 15, "m"]
			&SX\arrow[dd, "\phi"]
			\\
			S^{2}X\arrow[rd, "\mu_{X}"']&{}&&&=
			\\
			&SX \arrow[rr, "\phi"']
			&{}& X
		\end{tikzcd}\begin{tikzcd}[font=\fontsize{9}{6}]
			S^{3}X \arrow[dd, "\mu_{SX}"']
			\arrow[rr, "S^{2}\phi"]
			&{}\arrow[dd, Rightarrow, shorten = 20, Rightarrow, "\mu_{\phi}"]
			& S^{2}X\arrow[rd, "S\phi"]\arrow[dd, "\mu_{X}"description]
			\\
			&&&SX\arrow[dd, "\phi"]
			\arrow[d, Rightarrow, shorten = 5, shift right = 12, "m"]
			\\
			S^{2}X\arrow[rd, "\mu_{X}"']\arrow[rr, "S\phi"]
			&{}\arrow[d, Rightarrow,"m"]
			&SX\arrow[rd, "\phi"]
			&{}
			\\
			&SX \arrow[rr, "\phi"']
			&{}& X
		\end{tikzcd}$$
		
		$$\begin{tikzcd}[font=\fontsize{9}{6}]
			&&{}\arrow[dd, Rightarrow, shorten = 10, "Su"]
			&&SX\arrow[rrdd, "\phi"]\arrow[dddd, Rightarrow, shorten = 15, "m"]
			\\
			\\
			SX \arrow[rrrruu, bend left = 30, "1_{SX}"]
			\arrow[rrrrdd, bend right = 30, "1_{SX}"']
			\arrow[rr, "S\eta_{X}"]
			&& S^{2}X \arrow[rruu, "S\phi"]\arrow[rrdd, "\mu_{X}"]\arrow[dd, Rightarrow, shorten = 10, "\rho_{X}"]
			&&&&X&=&SX\arrow[rr, bend left = 20, "\phi"name=A]\arrow[rr, bend right = 20, "\phi"'name=B] && X
			\\
			\\
			&&{}&&SX\arrow[rruu, "\phi"]
			\arrow[from=A, to=B, Rightarrow, shorten = 5, "1_{\phi}"]
		\end{tikzcd}$$
		\item A \emph{pseudomorphism} $\left(f, \overline{f}\right): \left(X, \phi, u, m\right) \rightarrow \left(Y, \psi, n, v\right)$ consists of a morphism $f: X \rightarrow Y$ and an invertible $2$-cell $\overline{f}: \psi.Sf \Rightarrow f.\phi$ satisfying the axioms depicted below.

		$$\begin{tikzcd}[font=\fontsize{9}{6}]
			S^{2}X \arrow[dd, "\mu_{SX}"']
			\arrow[rr, "S^{2}f"]\arrow[rd, "S\phi"description]
			&{}\arrow[d, Rightarrow, shorten = 5, "S\overline{f}"]& S^{2}Y\arrow[rd, "S\psi"]
			\\
			&SX\arrow[rr, "Sf"]\arrow[dd, "\phi"]\arrow[d, Rightarrow, shorten = 5, shift right = 12, "m"]
			&{}\arrow[dd, Rightarrow, shorten = 15, "\overline{f}"]
			&SY\arrow[dd, "\psi"]
			\\
			SX\arrow[rd, "\phi"']&{}&&&=
			\\
			&X \arrow[rr, "f"']
			&{}& Y
		\end{tikzcd}\begin{tikzcd}[font=\fontsize{9}{6}]
			S^{2}X \arrow[dd, "\mu_{SX}"']
			\arrow[rr, "S^{2}f"]
			&{}\arrow[dd, Rightarrow, shorten = 15, "\mu_{f}"]& S^{2}Y\arrow[rd, "S\psi"]\arrow[dd, "\mu_{Y}"]
			\\
			&&{}
			&SY\arrow[dd, "\psi"]\arrow[d, Rightarrow, shift right = 10, "n"]
			\\
			SX\arrow[rd, "\phi"']\arrow[rr, "Sf"]&{}\arrow[d, Rightarrow, shorten = 5, "\overline{f}"]
			&SY\arrow[rd, "\psi"]&{}
			\\
			&X \arrow[rr, "f"']
			&{}& Y
		\end{tikzcd}$$
		
		$$\begin{tikzcd}[font=\fontsize{9}{6}]
			X \arrow[rr, "\eta_{X}"]\arrow[rrdd, bend right = 30, "1_{X}"'] &{}\arrow[dd, Rightarrow, shorten = 18, "u"]
			& SX\arrow[dd, "\phi"] \arrow[rr, "Sf"] &{}\arrow[dd, Rightarrow, shorten = 18, "\overline{f}"]
			& SY\arrow[dd, "\psi"]
			\\
			&&&&&=
			\\
			&{}& X \arrow[rr, "f"'] &{}
			& Y
		\end{tikzcd}\begin{tikzcd}[font=\fontsize{9}{6}]
			X \arrow[rr, "\eta_{X}"]\arrow[rrdd, "f"']
			&& SX \arrow[dd, Rightarrow,shorten = 15, "\eta_{f}"]
			\arrow[rr, "Sf"] && SY\arrow[dd, "\psi"]
			\\
			&&&{}\arrow[d, shorten = 5,Rightarrow, "v"]
			\\
			&&Y\arrow[rruu, "\eta_{Y}"]
			\arrow[rr, "1_{Y}"']&{}&Y
		\end{tikzcd}$$
		\item Let $\left(f, \overline{f}\right)$ be as above and let $\left(g, \overline{g}\right)$ be another parallel pseudomorphism. A $2$-cell of pseudoalgebras $\gamma: \left(f, \overline{f}\right) \Rightarrow \left(g, \overline{g}\right)$ consists of a $2$-cell $\gamma: f \rightarrow g$ satisfying the equation depicted below.
		
		$$\begin{tikzcd}[font=\fontsize{9}{6}]
			SX \arrow[dd, "\phi"']\arrow[rr, bend left = 30, "Sf"name=A]\arrow[rr, bend right = 30, "Sg"'name=B] && SY\arrow[dd, "\psi"]
			\\
			&&&=&{}
			\\
			X \arrow[rr, bend right = 30, "g"'name=D]&& Y
			\arrow[from=A, to=B, Rightarrow, shorten = 5, "S\gamma"]\arrow[from=B, to=D, Rightarrow, shorten = 15, "\overline{g}"]
		\end{tikzcd}\begin{tikzcd}[row sep = 27, font=\fontsize{9}{6}]
			SX \arrow[dd, "\phi"']\arrow[rr, bend left = 30, "Sf"name=A]&& SY\arrow[dd, "\psi"]
			\\
			\\
			X \arrow[rr, bend left = 30, "f"name=C]
			\arrow[rr, bend right = 30, "g"'name=D]&& Y
			\arrow[from=A, to=C, Rightarrow, shorten = 15, "\overline{f}"]\arrow[from=C, to=D, Rightarrow, shorten = 5, "\gamma"]
		\end{tikzcd}$$
		
	\end{enumerate}
	
\end{definition}

\noindent Pseudoalgebras, pseudomorphisms and $2$-cells of pseudoalgebras form the Eilenberg-Moore object of the pseudomonad in $\mathbf{Gray}$. We review the universal property in Proposition \ref{EM universal property}, to follow, and review how this notion can be seen as a weighted limit in Definition \ref{Gray Kleisli def as a left kan extension}. In contrast to Proposition \ref{EM universal property} and to the one-dimensional setting, neither free pseudoalgebras nor Kleisli bicategories described in Subsection \ref{section Kleisli Bicategory} have an enriched universal property.

\begin{proposition}\label{EM universal property}
	\hspace{1mm}
	\begin{enumerate}
		\item There is a $2$-category $\mathcal{A}^{S}$ whose objects are pseudoalgebras, whose morphisms are pseudomorphisms, and whose $2$-cells are $2$-cells of pseudoalgebras.
		\item There is a $2$-functor $U^S: \mathcal{A}^{S} \rightarrow \mathcal{A}$ which sends
		\begin{itemize}
			\item a pseudoalgebra $\left(X, \phi, u, m\right)$ to $X$,
			\item a pseudomorphism $\left(f, \overline{f}\right)$ to $f$,
			\item a $2$-cell of pseudoalgebras $\gamma$, to itself viewed as a $2$-cell of $\mathcal{A}$.
		\end{itemize}
		\item There is a $2$-functor $F^{S}: \mathcal{A} \rightarrow \mathcal{A}^{S}$ which sends 
		\begin{itemize}
			\item an object $X$ to the pseudoalgebra $\left(SX, \mu_{X}, \rho_{X}, \alpha_{X}\right)$,
			\item a morphism $f: X \rightarrow Y$ to the pseudomorphism $\left(Sf, \mu_{f}\right)$,
			\item a $2$-cell $\gamma: f \Rightarrow g$ to the $2$-cell of pseudoalgebras $S\gamma$.
		\end{itemize}
		\item There is a pseudonatural transformation $\varepsilon: F^{S} U^{S} \rightarrow 1_{\mathcal{A}^{S}}$ whose 
		\begin{itemize}
			\item $1$-cell component on a pseudoalgebra $\left(X, \phi, u, m\right)$ is the pseudomorphism $\left(\phi, m\right): \left(SX, \mu_{X}, \rho_{X}, \alpha_{X}\right) \rightarrow \left(X, \phi, u, m\right)$,
			\item $2$-cell component on a pseudomorphism $\left(f, \overline{f}\right): \left(X, \phi, u, m\right) \rightarrow \left(Y, \psi, v, n\right)$ is given by $\overline{f}$.
		\end{itemize}
		\item There is a modification $m: \varepsilon_{F}.F\eta \cong 1_{\varepsilon}$ whose component on the pseudoalgebra $\left(X, \phi, u, m\right)$ is given by $u$.
		\item The data described thus far define a pseudoadjunction in $\mathbf{Gray}$.
		\item Given a $2$-category $\mathcal{B}$, the canonical $2$-functor $K_\mathcal{B}: \mathbf{Gray}\left(\mathcal{B}, \mathcal{A}^{S}\right) \rightarrow {\mathbf{Gray}\left(\mathcal{B}, \mathcal{A}\right)}^{\mathbf{Gray}\left(\mathcal{B}, S\right)}$ is an isomorphism of $2$-categories.
		\item The assignment of $\mathcal{B}$ to $K_\mathcal{B}$ defines a $\mathbf{Gray}$-natural transformation.
	\end{enumerate}
\end{proposition}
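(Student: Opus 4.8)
The plan is to treat parts (1)--(6) as direct verifications from the pseudomonad axioms and to concentrate effort on the representability statement (7), from which (8) follows formally. Throughout I would lean on the coherence theorem for pseudoalgebras of \cite{Coherent Approach to Pseudomonads} to suppress constraint cells and reduce the pasting diagrams to manageable form. For (1)--(3), to see that $\mathcal{A}^{S}$ is a $2$-category I would check that the composite of two pseudomorphisms, with structure cell obtained by pasting $\overline{g}$ and $\overline{f}$, again satisfies the two pseudomorphism axioms, and that vertical and horizontal composites of $2$-cells of pseudoalgebras are again such; the $2$-category axioms are then inherited from $\mathcal{A}$. That $U^{S}$ is a $2$-functor is immediate since it only forgets structure. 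For $F^{S}$ the content is that $\left(SX, \mu_{X}, \rho_{X}, \alpha_{X}\right)$ satisfies the pseudoalgebra axioms, which is precisely the associativity and unit coherence of $\left(\mathcal{A}, S\right)$; functoriality then follows from $2$-naturality of $\mu$ together with the pseudomonad coherences.

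For (4)--(6), the components of $\varepsilon$ are the free-algebra structure maps $\left(\phi, m\right)$, and pseudonaturality of $\varepsilon$ together with the triangle coherence witnessed by $m$ amount to rewriting the pseudomonad's own coherence data; I would verify the two pseudoadjunction axioms by pasting, again invoking \cite{Coherent Approach to Pseudomonads} to collapse the coherence cells.

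The heart of the argument is (7). Here I would first observe that, since $S$ is a genuine $\mathbf{Gray}$-endofunctor carrying pseudomonad structure and $\mathbf{Gray}\left(\mathcal{B}, -\right)$ is a $\mathbf{Gray}$-functor, $\mathbf{Gray}\left(\mathcal{B}, S\right)$ is a pseudomonad on $\mathbf{Gray}\left(\mathcal{B}, \mathcal{A}\right)$, so that the codomain of $K_{\mathcal{B}}$ is well defined and $K_{\mathcal{B}}$ is the canonical comparison induced by the image of the pseudoadjunction of (6) under $\mathbf{Gray}\left(\mathcal{B}, -\right)$. I would then exhibit a strict inverse by unpacking both sides: a $2$-functor $H: \mathcal{B} \rightarrow \mathcal{A}^{S}$ assigns to each object $b$ a pseudoalgebra $\left(Gb, \phi_{b}, u_{b}, m_{b}\right)$, to each $f$ a pseudomorphism $\left(Gf, \overline{Gf}\right)$, and to each $2$-cell a $2$-cell of pseudoalgebras; dually, a pseudoalgebra for $\mathbf{Gray}\left(\mathcal{B}, S\right)$ consists of a $2$-functor $G$, a pseudonatural $\Phi: SG \Rightarrow G$ with components $\phi_{b}$ and naturality cells $\overline{Gf}$, and invertible modifications $u, m$ with components $u_{b}, m_{b}$. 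The key point is that strict $2$-functoriality of $H$ is logically equivalent to the pseudonaturality axioms of $\Phi$ together with the modification axioms for $u$ and $m$: preservation of composites and identities by $H$ is exactly respect of $\Phi$ for composition and identities, and preservation of $2$-cells is exactly the $2$-cell naturality of $\Phi$. Running the same comparison one dimension up identifies pseudonatural transformations $H \Rightarrow H'$ with pseudomorphisms of $\mathbf{Gray}\left(\mathcal{B}, S\right)$-pseudoalgebras, and modifications with $2$-cells of pseudoalgebras. Assembling these bijections yields the two-sided strict inverse to $K_{\mathcal{B}}$.

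I expect the main obstacle to lie in verifying that these correspondences are compatible on the nose, that is, that $K_{\mathcal{B}}$ is a strict \emph{isomorphism} rather than merely a biequivalence. The delicate point is the interaction between the structure $2$-cells and the $\mathbf{Gray}$-tensor: pseudonaturality of $\Phi$ and composition of pseudomorphisms both involve the non-strict interchange of $\mathbf{Gray}$, and one must check that the two recipes for building a composite structure cell agree identically. I would handle this by appealing again to the coherence theorem of \cite{Coherent Approach to Pseudomonads}, which pins down a canonical normal form for such pastings and thereby makes the two descriptions literally equal. Finally, for (8), I would argue formally: for $P: \mathcal{B}' \rightarrow \mathcal{B}$, restriction along $P$ commutes with postcomposition by $U^{S}$ and with the formation of the induced pseudoalgebra structure, so the relevant square of $2$-functors commutes strictly; the enriched, as opposed to merely ordinary, naturality is the compatibility of these squares with the $\mathbf{Gray}$-tensor, which holds because every construction in sight is built from $\mathbf{Gray}$-functors. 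Hence $K$ is a $\mathbf{Gray}$-natural isomorphism, exhibiting $\mathcal{A}^{S}$ as the $\mathbf{Gray}$-enriched Eilenberg--Moore object.
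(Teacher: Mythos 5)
The paper does not actually prove this proposition: its entire proof is a one-line citation to Theorem 6.2 of \cite{Coherent Approach to Pseudomonads}, where parts (1)--(8) are established. Your proposal is therefore a genuinely different route; you reconstruct that theorem by direct verification. Your central move for part (7) --- unpacking a $2$-functor $H: \mathcal{B} \rightarrow \mathcal{A}^{S}$ into objectwise pseudoalgebras, morphismwise pseudomorphism data and $2$-cells of pseudoalgebras, and matching this against a $\mathbf{Gray}\left(\mathcal{B}, S\right)$-pseudoalgebra $\left(G, \Phi, u, m\right)$, with strict $2$-functoriality of $H$ corresponding to the composition, identity and $2$-cell axioms of pseudonaturality of $\Phi$, and the pseudomorphism axioms for $Hf$ corresponding to the modification axioms for $u$ and $m$ at $f$ --- is exactly right, and it is the same dictionary the paper itself uses on the colimit side in Remark \ref{Gray Kleisli generators and relations}. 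What the citation (i.e.\ Lack's machinery) buys is conceptual leverage: the isomorphism in (7) and the $\mathbf{Gray}$-naturality in (8) fall out of the enriched weighted-limit calculus together with his coherence theorem for pseudoalgebras, so the long pasting verifications (notably the swallowtail axioms in your part (6)) are done once and for all at the level of the free structures $\mathbf{Psmnd}$ and $\mathbf{Psadj}$. What your route buys is self-containedness and an explicit, elementary description of the inverse to $K_{\mathcal{B}}$.

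Two small corrections to your sketch. First, strict functoriality of $F^{S}$ does not follow from ``$2$-naturality of $\mu$'': the multiplication $\mu$ is only pseudonatural, and that is the point --- the structure cell of $F^{S}f$ is the constraint $\mu_{f}$, and preservation of composites and identities by $F^{S}$ is precisely the composition and identity axioms for the pseudonatural transformation $\mu$. Second, the ``main obstacle'' you identify is not actually there. The non-strict interchange of the $\mathbf{Gray}$ tensor concerns horizontal composition of transformations across different $2$-categories, whereas every pasting appearing in your comparison (composition of pseudomorphisms, the constraint cells of $\Phi$) lives inside the single $2$-category $\mathcal{A}$, or inside the hom $2$-category $\mathbf{Gray}\left(\mathcal{B}, \mathcal{A}\right)$, where whiskering and vertical composition satisfy strict interchange; hence the two recipes for composite structure cells agree on the nose for elementary reasons, and no appeal to the coherence theorem is needed to make $K_{\mathcal{B}}$ a strict isomorphism rather than a biequivalence.
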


\begin{proof}
	See Theorem 6.2 of \cite{Coherent Approach to Pseudomonads}.
\end{proof}

\begin{notation}
	The $2$-category $\mathbf{FreePsAlg}\left(\mathcal{A}, S\right)$ will denote the full sub-$2$-category of $\mathcal{A}^S$ on those pseudoalgebras of the form $\left(SX, \mu_{X}, \rho_{X}, \alpha_{X}\right)$ for some $X \in \mathcal{A}$.
\end{notation}

\subsection{Kleisli bicategories}\label{section Kleisli Bicategory}

\noindent We review the construction of the Kleisli bicategory of a pseudomonad given in \cite{Cheng Hyland Power Pseudodistributive Laws}. Even when the pseudomonad being considered is in $\mathbf{Gray}$, and hence the base $\mathcal{A}$ and the endomorphism $S: \mathcal{A} \rightarrow \mathcal{A}$ are strict, this type of Kleisli construction only produces a bicategory. Hence $\mathbf{Gray}$-enriched notions are unable to express its universal property. We do not need the Kleisli bicategory for any of the results in this paper. Rather, once our results are built upon further in \cite{Miranda Tricategorical Limits and Colimits}, a deeper understanding of its universal property can be achieved. This is desirable, since as we mentioned in the introduction, Kleisli bicategories occur more naturally than the $\mathbf{Gray}$-enriched Kleisli objects that we will describe in Subsection \ref{Subsection the presentation of the Kleisli object of a pseudomonad}.
\\
\\
\noindent Since it has many parts, we summarise the content of Proposition \ref{Kleilsi bicategory}.
\begin{itemize}
	\item Part (1) recalls the bicategory structure,
	\item Parts (2) and (3) recall the left and right pseudoadjoints respectively,
	\item Parts (4) and (5) respectively describe the counit and left triangulator of the pseudoadjunction, which is collected in part (6),
	\item Part (7) records that the pseudoadjunction only splits the original pseudomonad up to an invertible icon,
	\item Part (8) recalls the universal property.
\end{itemize}   

\begin{proposition}\label{Kleilsi bicategory}
	Let $\left(\mathcal{A}, S\right)$ a pseudomonad in $\mathbf{Gray}$.
	
	\begin{enumerate}
		\item There is a bicategory $\mathbf{KL}\left(\mathcal{A}, S\right)$ with the following data. 
		
		\begin{itemize}
			\item Objects as in $\mathcal{A}$,
			\item Hom-categories given by $\mathcal{A}\left(X, SY\right)$,
			\item Units given by $\eta_{X}: X \rightarrow SX$,
			\item Composition functors given by
		\end{itemize}
		$$\begin{tikzcd}
			\mathcal{A}\left(Y{,} SZ\right) \times \mathcal{A}\left(X{,} SY\right) \arrow[r, "S \times 1"]
			& \mathcal{A}\left(SY{,} S^2Z\right) \times \mathcal{A}\left(X{,} SY\right) \arrow[r, "\circ"]
			& \mathcal{A}\left(X{,} S^{2}Z\right) \arrow[r, "\mathcal{A}\left(X{,}\mu_{Z}\right)"]
			& \mathcal{A}\left(X{,} SZ\right) 
		\end{tikzcd}$$
		\begin{itemize}
			\item Left and right unitors on $f: X \rightarrow SY$ given by the pastings in $\mathcal{A}$ depicted below. 
			
			$$\begin{tikzcd}[font=\fontsize{9}{6}]
				X \arrow[rr, "f"]
				&& SY \arrow[rr, "S\eta_{Y}"] \arrow[rrdd,bend right =30, "1_{SY}"']
				&{}\arrow[dd,Rightarrow, shorten = 10, "\rho_{Y}"]& S^{2}Y \arrow[dd, "\mu_{Y}"]
				\\
				\\
				&&&
				{}
				&SY&{}
			\end{tikzcd}
			\begin{tikzcd}[font=\fontsize{9}{6}]
				X \arrow[rrdd, "f"]
				\arrow[rr, "\eta_{X}"]
				&& SX
				\arrow[dd, Rightarrow, shorten = 5, "\eta_{f}"]
				\arrow[rr, "Sf"] 
				&& S^{2}Y \arrow[dd, "\mu_{Y}"]
				\\
				&&&{}\arrow[d, Rightarrow, "\lambda_{Y}"]
				\\
				&&
				SY
				\arrow[rr, "1_{SY}"']
				\arrow[rruu, "\eta_{SY}"]
				&{}&SY
			\end{tikzcd}$$
			\item Associator given by the pasting in $\mathcal{A}$ depicted below. 
			
			$$\begin{tikzcd}[font=\fontsize{9}{6}]
				W \arrow[rr, "f"]
				&& SX \arrow[rr, "Sg"]
				&& {S^2}Y \arrow[rr,"{S^2}h"]
				\arrow[dd, "{\mu}_Y"']
				&{}\arrow[dd,Rightarrow, shorten = 10, "\mu_{h}"]
				&{S^3}Z \arrow[rr, "S {\mu}_Z"]
				\arrow[dd, "{\mu}_{{S}_{Z}}"']
				&{}\arrow[dd, Rightarrow, shorten = 10, "\alpha_{Z}"]& {S^2}Z \arrow[dd, "{\mu}_{Z}"]
				\\
				\\
				&&&& SY \arrow[rr, "Sh"']
				&{}& {S^2}Z \arrow[rr, "{\mu}_Z"']
				&{}& SZ
			\end{tikzcd}$$
		\end{itemize}
		\item There is a normal pseudofunctor $L: \mathcal{A} \rightsquigarrow \mathbf{KL}\left(\mathcal{A}, S\right)$ with the following description.
		\begin{itemize}
			\item It is given by the identity on objects.
			\item It is given by composition with $\eta_{Y}$ between hom-categories:
			
			$$\begin{tikzcd}
				\mathcal{A}\left(X{,}Y\right) \arrow[rr, "\mathcal{A}\left(X{,}\eta_{Y}\right)"] && \mathcal{A}\left(X{,}SY\right)
			\end{tikzcd}$$
			\item The compositor on \begin{tikzcd}
				X \arrow[r, "f"] & Y \arrow[r, "g"] &Z
			\end{tikzcd} is given by the $2$-cell in $\mathbf{KL}\left(\mathcal{A}, S\right)$ determined by the pasting in $\mathcal{A}$ depicted below.
			
			$$\begin{tikzcd}[font=\fontsize{9}{6}]
				X \arrow[rr, "f"] 
				&& Y\arrow[rrdd, "g"'] 
				\arrow[rr, "\eta_{Y}"]
				&& SY \arrow[rr, "Sg"]
				\arrow[dd, Rightarrow, shorten = 5, "\eta_{g}"]
				&& SZ \arrow[rrdd, bend right = 30, "1_{SZ}"']
				\arrow[rr, "S\eta_{Z}"]
				&{}\arrow[dd, Rightarrow, shorten = 10, "\rho_{Z}"]
				& S^{2}Z\arrow[dd, "\mu_{Z}"]
				\\
				\\
				&&&&
				Z\arrow[rruu, "\eta_{Z}"']
				&&&{}& SZ
			\end{tikzcd}$$
		\end{itemize}
		\item There is a pseudofunctor $R: \mathbf{KL}\left(\mathcal{A}, S\right) \rightsquigarrow \mathcal{A}$ with the following description.
		
		\begin{itemize}
			\item It is given on objects by sending $X$ to $SX$.
			\item It is given between hom-categories as depicted below.
			
			$$\begin{tikzcd}
				\mathcal{A}\left(X{,}SY\right) \arrow[rr, "S"] && \mathcal{A}\left(SX{,}S^2Y\right) \arrow[rr, "\mathcal{A}\left(X{,}\mu_{Y}\right)"]
				&& \mathcal{A}\left(SX{,}SY\right)
			\end{tikzcd}$$
			\item The component of its unitor on $X$ is given by $\rho_{X}$
			\item The component of its compositor on $\left(f: X \rightarrow SY, g: Y \rightarrow SZ\right)$ is given by the pasting depicted below.
			$$\begin{tikzcd}[font=\fontsize{9}{6}]
				SX \arrow[rr, "Sf"] 
				&& S^2Y
				\arrow[rrdd, "S^2g"']
				\arrow[rr, "\mu_{Y}"] 
				&& SY
				\arrow[dd, Rightarrow, shorten = 8, "\mu_{g}"]
				\arrow[rr, "Sg"]
				&&S^2Z
				\arrow[rr, "\mu_{Z}"]
				\arrow[dd, Rightarrow, shorten = 8, "\alpha_{Z}"]
				&&SZ
				\\
				\\
				&&&&S^3Z
				\arrow[rruu, "\mu_{SZ}"]
				\arrow[rr, "S\mu_{Z}"']
				&{}
				&	S^2Z
				\arrow[rruu, "\mu_{Z}"']
			\end{tikzcd}$$		
			
		\end{itemize}
		\item There is a pseudonatural transformation $\varepsilon: LR \Rightarrow 1_{\mathbf{KL}\left(\mathcal{A}, S\right)}$ admitting the following description.
		\begin{itemize}
			\item Its $1$-cell component on $X$ given by $1_{SX}: SX \rightarrow SX$, seen as a morphism in $\mathbf{KL}\left(\mathcal{A}, S\right)$,
			\item Its $2$-cell component on $f: X \rightarrow SY$ given by the $2$-cell in $\mathbf{KL}\left(\mathcal{A}, S\right)$ determined by the left whiskering of $\lambda_{Y}$ with $Rf$ in $\mathcal{A}$.
		\end{itemize} 
		\item There is an invertible modification $\left(R\varepsilon\right).\left(\eta R\right) \Rrightarrow 1_{R}$, whose component on $X$ is given by the $2$-cell $\lambda_{X}: \mu_{X}.\eta_{SX} \Rightarrow 1_{SX}$.
		\item The data described in (1)-(5) define a pseudoadjunction in $\mathbf{Bicat}$.
		\item The pseudomonad induced on $\mathcal{A}$ is isomorphic to $\left(\mathcal{A}, S\right)$ via an invertible icon $\phi: S \rightarrow RL$ whose $2$-cell component on $f: X \rightarrow Y$ is given by the left whiskering of $\rho_{Y}$ with $f$.
		\item For any bicategory $\mathcal{B}$, precomposition with $L$ induces a biequivalence $\mathbf{Bicat}\left(\mathbf{KL}\left(\mathcal{A}, S\right), \mathcal{B}\right) \rightsquigarrow {\mathbf{Bicat}\left(\mathcal{A}, \mathcal{B}\right)}^{\mathbf{Bicat}\left(S, \mathcal{B}\right)}$, where the target is the bicategory of pseudoalgebras, pseudomorphisms and $2$-cells for the pseudomonad determined by restriction along $S$.
	\end{enumerate} 
\end{proposition}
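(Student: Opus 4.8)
The whole statement specialises the construction of Cheng, Hyland and Power \cite{Cheng Hyland Power Pseudodistributive Laws} to a pseudomonad in $\mathbf{Gray}$, where the base $\mathcal{A}$ is a strict $2$-category and the endomorphism $S$ is a $2$-functor. My plan is to exploit this strictness: because $S$ preserves composition and identities on the nose, the pseudofunctoriality constraints of $S$ that would appear in the general construction become identities, so every coherence cell in sight is built only from the pseudomonad data $\eta$, $\mu$ and the invertible modifications $\alpha$, $\lambda$, $\rho$. I would therefore verify parts (1)--(6) by reducing the relevant bicategory, pseudofunctor, pseudonaturality and modification axioms to instances of the pseudomonad coherence axioms of Section~8 of \cite{Doctrines fully faithful adjoint string}. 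For the bicategory structure in part (1) the substantive checks are the pentagon for the associator and the two triangles for the unitors; each unpacks, after cancelling the trivial $S$-constraints, into a pasting identity among $\alpha$, $\lambda$ and $\rho$. For the normal pseudofunctor $L$ of part (2) I would note that its action is post-composition with $\eta$ and that the units of $\mathbf{KL}\left(\mathcal{A}, S\right)$ are exactly the $\eta_{X}$, so normality is immediate and the coherence of the compositor follows from the unit axioms; for $R$ in part (3), which sends $f\colon X\to SY$ to $\mu_{Y}\circ Sf$, the unitor $\rho$ and the compositor assembled from $\mu$ and $\alpha$ satisfy the pseudofunctor axioms by the associativity coherence of the pseudomonad.

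Parts (4)--(7) proceed similarly. The counit $\varepsilon$ has identity $1$-cell components, and its $2$-cell component on $f$ is a whiskering of $\lambda$, so pseudonaturality follows from the naturality and coherence of $\lambda$. The triangulator of part (5) has component $\lambda_{X}$, and its modification axiom is again a pseudomonad coherence. Part (6) assembles this data into a pseudoadjunction in $\mathbf{Bicat}$: the triangle identities hold up to the invertible modifications just described, and the residual coherence reduces to the unit axioms for $S$. For the icon in part (7) I would compute that $RL$ is the identity on objects with value $SX$ and sends $f\colon X\to Y$ to $\mu_{Y}\circ S\eta_{Y}\circ Sf$; whiskering the right-unit modification $\rho_{Y}$ with $f$ gives an invertible $2$-cell to $Sf$, and checking that these components are natural and compatible with $\eta$ and $\mu$ exhibits the invertible icon $\phi\colon S\Rightarrow RL$.

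The real content is the universal property of part (8), which I expect to be the main obstacle. Here the target is the bicategory of pseudoalgebras for the precomposition pseudomonad $\mathbf{Bicat}\left(S, \mathcal{B}\right)$ on $\mathbf{Bicat}\left(\mathcal{A}, \mathcal{B}\right)$, whose objects are pseudofunctors $G\colon \mathcal{A}\rightsquigarrow\mathcal{B}$ equipped with a coherent action $a\colon G\circ S\Rightarrow G$. The plan is to show that precomposition with $L$ is a biequivalence by producing an explicit pseudo-inverse. Given an algebra $\left(G, a\right)$ I would define $\widetilde{G}\colon \mathbf{KL}\left(\mathcal{A}, S\right)\rightsquigarrow\mathcal{B}$ to agree with $G$ on objects and to send a Kleisli arrow $f\colon X\to SY$ to $a_{Y}\circ Gf\colon GX\to GY$. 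The delicate step is checking that $\widetilde{G}$ is a pseudofunctor: its coherence cells must carry the associator and unitors of $\mathbf{KL}\left(\mathcal{A}, S\right)$, built from $\alpha$, $\lambda$ and $\rho$, to the structural cells of $\mathcal{B}$, and this is exactly where the pseudoalgebra axioms for $\left(G, a\right)$ are consumed. To see that the two constructions are mutually pseudo-inverse I would use the factorisation $f\cong \varepsilon_{Y}\circ L\left(f\right)$ of an arbitrary Kleisli arrow through the counit, a consequence of the left-unit modification $\lambda$, together with the algebra unit law $a\circ G\eta\cong 1$: the former shows that a pseudofunctor out of $\mathbf{KL}\left(\mathcal{A}, S\right)$ is determined up to equivalence by its restriction along $L$ and the action of $\varepsilon$, and the latter shows that extending and then restricting returns the original algebra. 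Assembling these comparisons, whose coherence cells are drawn from $\lambda$, $\rho$ and $a$, into pseudonatural equivalences witnessing a biequivalence of bicategories is the bookkeeping I expect to be the most laborious part.
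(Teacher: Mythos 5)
Your proposal is correct and takes essentially the same route as the paper: the paper's own proof is just a citation to \cite{Cheng Hyland Power Pseudodistributive Laws} (Definitions 4.1--4.2, Theorem 4.3, Corollary 4.4), where exactly the direct calculation you sketch is carried out, including the pseudo-inverse for part (8) that extends an algebra $\left(G, a\right)$ to $\widetilde{G}$ with $\widetilde{G}\left(f\right) = a_{Y}\circ Gf$ and the factorisation $f \cong \varepsilon_{Y}\circ L\left(f\right)$ via $\lambda$. Your additional observation that strictness of $S$ trivialises its constraint cells is a harmless specialisation of that calculation to the $\mathbf{Gray}$ setting.
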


\begin{proof}
	The proof is via a direct calculation. See \cite{Cheng Hyland Power Pseudodistributive Laws} Definition 4.1 for the definition of the Kleisli bicategory, Definition 4.2 for a description of the bicategory of pseudoalgebras ${\mathbf{Bicat}\left(\mathcal{A}, \mathcal{B}\right)}^{\mathbf{Bicat}\left(S, \mathcal{B}\right)}$ via cocones, Theorem 4.3 for the universal property in part (8), and Corollary 4.4 for the pseudoadjunction in part (6).
\end{proof}

\noindent In the forthcoming \cite{Miranda Tricategorical Limits and Colimits} we will show that the biequivalences of Proposition \ref{Kleilsi bicategory} part (8) are trinatural in $\mathcal{B}$, and that Kleisli bicategories have a universal property of a tricategorical colimit. This will answer the question posed in Remark 4.7 of \cite{Formal Theory of Pseudomonads}.

\begin{remark}\label{combinatorics of Kleisli arrows and composition}
	We highlight what we consider to be the most combinatorially interesting aspect of the proof of Proposition \ref{Kleilsi bicategory}, namely the proof of the pentagon axiom for the bicategory structure on $\mathbf{KL}\left(\mathcal{A}, S\right)$. In the diagram below, we have omitted notation for the $2$-cells which fill each square. These are all components of $\mu$ or $\alpha$. The boundary corresponds to the pentagon of associators. The two cubes are the two steps of the proof, with the right cube commuting by the associativity axiom for $\left(\mathcal{A}, S\right)$ on $Z$, and the left cube commuting by the modification condition for $\alpha$ on the morphism $z: Y \rightarrow SZ$.
	
	$$\begin{tikzcd}[column sep=12,row sep=12, font=\fontsize{9}{6}]
		V \arrow[rrrr, "w"]
		&&&& SW \arrow[rrrr, "Sx"]
		&&&& {S^2}X \arrow[rrrr, "{S^2}y"] \arrow[dddd, "{\mu}_X"']
		&&&& {S^3}Y
		\arrow[rrrr, "{S^3}z"]
		\arrow[rddd, "S{{\mu}_Y}"]
		\arrow[dddd, "{\mu}_{SY}"']
		&&&& {S^4}Z 
		\arrow[rrrr, "{S^2}{{\mu}_Z}"]
		\arrow[rddd,"S{{\mu}_{SZ}}"]
		\arrow[dddd, "{\mu}_{{S^2Z}}"', near start]
		&&&& {S^3}Z
		\arrow[rddd, "S{{\mu}_Z}"]
		\arrow[dddd, "{\mu}_{SZ}"', near start]
		\\
		\\
		\\
		&&&&&&&&&&&&& {S^2}Y \arrow[rrrr, "{S^2}z", near start] 
		\arrow[dddd, "{\mu}_Y", near end]
		&&&& {S^3}Z \arrow[rrrr, "S{{\mu}_Z}", near start] 
		\arrow[dddd, "{\mu}_{SZ}", near end]
		&&&& {S^2}Z \arrow[dddd, "{\mu}_Z", near end]
		\\
		&&&&&&&& SX \arrow[rrrr, "Sy"']
		&&&& {S^2}Y \arrow[rrrr, "{S^2}z"', near end] \arrow[rddd, "{\mu}_Y"']
		&&&& {S^3}Z \arrow[rrrr, "{S}{{\mu}_Z}"', near end] \arrow[rddd, "{\mu}_{SZ}"']
		&&&& {S^2}Z \arrow[rddd, "{\mu}_Z"']
		\\
		\\
		\\
		&&&&&&&&&&&&& SY \arrow[rrrr, "Sz"']
		&&&& {S^2}Z \arrow[rrrr, "{\mu}_z"']
		&&&& SZ
	\end{tikzcd}$$
\end{remark}

\section{Weights and presentations}\label{Section weights and presentations}

\subsection{Kleisli categories for monads presented via formal algebra structures}\label{Subsection Kleisli via formal algebra structures}

\noindent Let $\mathbf{n}$ denote the ordered set $\{1 \leq ...\leq n\}$, and let $\mathbf{0}$ denote the empty set. Let $\Delta_{+}$ denote the category of finite ordinals and monotone maps. Ordinal sum $\left(\mathbf{m}, \mathbf{n}\right)\mapsto \mathbf{m}\oplus \mathbf{n}$ endows this category with a monoidal product, and the canonical maps \begin{tikzcd}
	\mathbf{0} \arrow[r, "!"] & \mathbf{1} &\mathbf{2}\arrow[l, "!"']
\end{tikzcd} endow the terminal object with a monoid structure. Moreover, $\Delta_{+}$ is the free strict monoidal category containing a monoid object. Let $\mathbf{Mnd}$ denote the suspension of $\Delta_{+}$, as a $2$-category with one object. A \emph{monad} in a $2$-category $\mathcal{K}$ is a $2$-functor $\mathbb{S}: \mathbf{Mnd} \rightarrow \mathcal{K}$. We say this monad is on the object $\mathbb{S}\left(\star\right)$, where $\star$ denotes the unique object of $\mathbf{Mnd}$.

\begin{example}\label{Example weight for ordinary Kleisli}
	Let $\Delta_{\bot}$ denote the sub-category of $\Delta_{+}$ consisting of those ordinals which have a bottom element, and of those monotone maps which preserve the bottom element. There is a $2$-functor $\mathbf{M}: \mathbf{Mnd} \rightarrow \mathbf{Cat}$ corresponding to the monad $ \mathbf{1}\oplus -$ on $\Delta_{\bot}$ given by freely adjoining a bottom element.
\end{example}

\noindent Let $\left(A, S, \eta, \mu\right)$ be a monad in $\mathbf{Cat}$ and let $\mathbb{S}: \mathbf{Mnd} \rightarrow \mathbf{Cat}$ denote the corresponding $2$-functor. The Kleisli category of a monad $\left(A, S, \eta, \mu\right)$ is the colimit of $\mathbb{S}$ weighted by the $2$-functor $\mathbb{M}: \mathbf{Mnd}^\text{op} \rightarrow \mathbf{Cat}$ described in Example \ref{Example weight for ordinary Kleisli} \cite{Formal Theory of Monads}. We take this as the definition of the Kleisli category $A_S$. Unwinding this definition yields the following coequaliser in $\mathbf{Cat}$.

$$\begin{tikzcd}
	{\Delta}_{\bot} \times \Delta_{+} \times A \arrow[rr, shift right = 2, "1_{\Delta_{\bot}}\times \mathbb{S}"']
	\arrow[rr, shift left = 2, "\mathbb{M}\times 1_{A}"] && \Delta_{\bot} \times A \arrow[rr, "\mathbb{F}"] && A_{S} 
\end{tikzcd}$$

\noindent However, by the coherence theorem for algebras over monoids, the remaining data of a functor $\mathbb{F}': \Delta_{\bot}\times A \rightarrow B$ satisfying $\mathbb{F}'.\left(\mathbb{M}\times 1_{A}\right) = \mathbb{F}'.\left(1_{\Delta_\bot}\times \mathbb{S}\right)$ is determined by the following information.

\begin{itemize}
	\item A functor $F': A \rightarrow B$,
	\item A natural transformation $\varepsilon': F'S \rightarrow F'$,
\end{itemize}

\noindent Such that $\left(F', \varepsilon'\right)$ is an algebra for the monad on the functor category $[A, A_{S}]$ given by restriction along $\left(S, \eta, \mu\right)$. Being such an algebra is to say that the following equations of natural transformations hold.

$$\begin{tikzcd}[font=\fontsize{9}{6}]
	F'S^{2} \arrow[rr, "F'\mu"]\arrow[dd, "\varepsilon' S"'] && F'S\arrow[dd, "\varepsilon'"]
	\\
	\\
	F'S \arrow[rr, "\varepsilon'"'] && F'&{}
\end{tikzcd}\begin{tikzcd}[font=\fontsize{9}{6}]
F' \arrow[rrdd, bend right = 30, "1"']
\arrow[rr, "F'.\eta"] && F'S\arrow[dd, "\varepsilon'"]
\\
\\
&& F'
\end{tikzcd}$$

\noindent A presentation for the Kleisli category can be deduced from this universal property. The objects in $A_S$ are the same as in $A$, and for every morphism $f: X \rightarrow Y$ in $A$, there is a generating morphism with the same domain and codomain in $A_S$. Moreover, for every $X \in A$, there is also a generator of the form $\varepsilon_{X}: SX \rightarrow X$. These data are subject to relations corresponding to relations between morphisms in $A$, as well as three new relations. These relations pertain to naturality of $\varepsilon$ in each generator of the form $f \in A\left(X, Y\right)$ as depicted below, and the two algebra axioms listed above.

$$\begin{tikzcd}[font=\fontsize{9}{6}]
	SX \arrow[rr, "Sf"]\arrow[dd, "\varepsilon_{X}"'] && SY\arrow[dd, "\varepsilon_{Y}"]
	\\
	\\
	X \arrow[rr, "f"'] && Y
\end{tikzcd}$$

\noindent The category defined via this presentation is indeed isomorphic to the more familiar version of the Kleisli category, consisting of Kleisli arrows. Letting $\mathbf{KL}\left(A, S\right)$ denote the Kleisli category whose morphisms are of the form $f: X \rightarrow SY$ with units $\eta_{X}$ and composition $g\circ_\mathbb{S}f : = \mu_{Z}.Sg.f$, the isomorphism of categories $I: A_{S} \rightarrow \mathbf{KL}\left(A, S\right)$ is given by the identity on objects, $\left(f: X \rightarrow Y\right) \mapsto \eta_{Y}.f$ for $f$ a generator from $A$, and $\left(\varepsilon_{X}: SX \rightarrow X\right) \mapsto 1_{SX}$.

\subsection{Kleisli objects for pseudomonads as weighted colimits}\label{Section definition and presentation of Gray enriched Kleisli objects for pseudomonads}
\noindent The following definitions are discussed and justified in \cite{Coherent Approach to Pseudomonads}. Here $\otimes$ denotes the Gray tensor product.

\begin{definition}\label{free living pseudomonad and pseudpadjunction}
	This definition recalls the free $\mathbf{Gray}$-monoid containing a pseudomonoid $\overline{\Delta}_{+}$, with the free $\mathbf{Gray}$-category containing a pseudomonad being its suspension.
	\begin{itemize}
		\item Let $\bar{\Delta}_{+}$ be the $2$-category in which \begin{itemize}
			\item The objects are the natural numbers,
			\item The morphisms are freely generated by the generating coface and codegeneracy simplicial maps $d_{0}, ... , d_{n}: [n] \rightarrow [n+1]$ and $s_{0}, ...., s_{n}: [n+1] \rightarrow [n]$,
			\item A $2$-cell $\phi \Rightarrow \psi$ exists whenever the paths $\phi$ and $\psi$ compose to give equal morphisms in the usual category of simplices $\Delta_{+}$. Note that these will necessarily be invertible.
		\end{itemize} 
		\item Let $\oplus: \bar{\Delta}_{+} \otimes \bar{\Delta}_{+} \rightarrow \bar{\Delta}_{+}$ denote the ordinal sum described in section 2 of \cite{Coherent Approach to Pseudomonads}.
		\item Let $\mathbf{Psmnd}$ be the suspension of the $\mathbf{Gray}$-monoid $\bar{\Delta}_{+}$.
		\item Let $\mathbf{Psadj}$ be the $\mathbf{Gray}$-category similarly constructed from the free-living adjunction $\mathbf{Adj}$ of \cite{Free Adjunction}, via a factorisation as described in section 5.1 of \cite{Coherent Approach to Pseudomonads} and recalled below. In this factorisation
		\begin{itemize}
			\item $\mathbb{ADJ}$ is a $2$-computad \cite{Street Limits Indexed By Category Values 2-functors} with two objects $X, Y$, two edges $f: X \rightarrow Y$ and $u: Y \rightarrow X$, and two $2$-cells $\eta: 1_{X} \Rightarrow uf$ and $\varepsilon: fu \Rightarrow 1_{Y}$. 
			\item $\mathcal{F}_2\left(-\right)$ denotes the free $2$-category of a $2$-computad and $D$ takes the discrete $\mathbf{Gray}$-category of a sesquicategory.
			\item $B$ is bijective on $k$-cells for $0 \leq k \leq 2$, while $F$ is fully faithful on $3$-cells.
		\end{itemize}
		
		$$\begin{tikzcd}
			\mathcal{F}_2\left(\mathbb{ADJ}\right)
			\arrow[rr, "B"]
			&& \mathbf{Psadj}
			\arrow[rr, "F"] 
			&& D\left(\mathbf{Adj}\right)
		\end{tikzcd}$$
		
		\item Let $J: \mathbf{Psmnd} \rightarrow \mathbf{Psadj}$ be the full-sub-$\mathbf{Gray}$-category inclusion on the object $\mathbf{m}$ on which the pseudomonad lives and let $\mathbf{c}$ denote the other object.
		\item A \emph{pseudomonad} in a $\mathbf{Gray}$-category $\mathfrak{K}$ will be a $\mathbf{Gray}$-functor $\mathbb{S}: \mathbf{Psmnd} \rightarrow \mathfrak{K}$.
	\end{itemize}
	
\end{definition}

\begin{remark}\label{remark sufficient data for a pseudomonad}
	By Theorem 3.5 of \cite{Coherent Approach to Pseudomonads}, if $\mathfrak{K}$ is a $\mathbf{Gray}$-category then to specify a pseudomonad $\mathbb{S}: \mathbf{Psmnd} \rightarrow \mathfrak{K}$ it suffices to do the following
	
	\begin{enumerate}
		\item Specify the images of
		
		\begin{itemize}
			\item The unique object $\mathbf{m}$ of $\mathbf{Psmnd}$. This will be referred to as the \emph{base} of the pseudomonad $\mathbb{S}$.
			\item The object $\mathbf{1} \in \bar{\Delta}_{+}$, viewed as an arrow in $\mathbf{Psmnd}$. This will be referred to as the \emph{underlying endomorphism} of the pseudomonad $\mathbb{S}$.
			\item The unique arrows $d_{0}: \mathbf{0} \rightarrow \mathbf{1}$ and $s_{0}: \mathbf{2} \leftarrow \mathbf{1}:$ in $\bar{\Delta}_{+}$, viewed as $2$-cells in $\mathbf{Psmnd}$. Then $\mathbb{S}d_{0}$ and $\mathbb{S}s_{0}$ will be referred to as the \emph{unit} and \emph{multiplication} of $\mathbb{S}$ respectively.
			\item The following invertible $2$-cells in $\bar{\Delta}_{+}$, each viewed as $3$-cells in $\mathbf{Psmnd}$ \begin{itemize}
				\item $1_\mathbf{1} \Rightarrow s_{0} . \left(d_{0} \oplus \mathbf{1}\right)$,
				\item $ s_{0}.\left(\mathbf{1}\oplus d_{0}\right) \Rightarrow 1_\mathbf{1}$, and
				\item $ s_{0}.\left(\mathbf{1}\oplus s_{0}\right) \Rightarrow s_{0}.\left(s_{0} \oplus \mathbf{1}\right)$.
			\end{itemize}
			The images of this data under $\mathbb{S}$ will be respectively referred to as the \emph{left unitor}, \emph{right unitor}, and \emph{associator} of $\mathbb{S}$. 
		\end{itemize}
		\item Check that coherences 1 and 2 as in Definition 3.1 of \cite{Marmolejo Pseudodistributive 1}, hold for these data. These coherence conditions are recalled below, using the more familiar $2$-categorical notation introduced in Notation \ref{pseudomonad notation}.
	\end{enumerate}
	Indeed, what we have described in this remark is usually what is understood by a pseudomonad and it is the content of Theorem 3.5 of \cite{Coherent Approach to Pseudomonads} that this data uniquely determines the rest of the $\mathbf{Gray}$-functor $\mathbb{S}:\mathbf{Psmnd} \rightarrow \mathfrak{K}$. We choose to take the $\mathbf{Gray}$-functor formulation as primitive since we will be considering its weighted limits and colimits.

	$$\begin{tikzcd}[font=\fontsize{9}{6}]
		S^{4} \arrow[dd, "\mu S^{2}"']
		\arrow[rr, "S^{2}\mu"]\arrow[rd, "S\mu S"description]
		&{}\arrow[d, Leftarrow, shorten = 5, "S\alpha"]& S^{3}\arrow[rd, "S\mu"]
		\\
		&S^{3}\arrow[rr, "S\mu"]\arrow[dd, "\mu S"]\arrow[d, Leftarrow, shorten = 5, shift right = 12, "\alpha S"]
		&{}\arrow[dd, Leftarrow, shorten = 15, "\alpha"]
		&S^2\arrow[dd, "\mu"]
		\\
		S^{3}\arrow[rd, "\mu S"']&{}&&&=
		\\
		&S^2 \arrow[rr, "\mu"']
		&{}& S
	\end{tikzcd}\begin{tikzcd}[font=\fontsize{9}{6}]
		S^{4} \arrow[dd, "\mu S^{2}"']
		\arrow[rr, "S^{2}\mu"]
		&{}\arrow[dd, Leftarrow, shorten = 20, Leftarrow, "\mu_{\mu}"]
		& S^{3}\arrow[rd, "S\mu"]\arrow[dd, "\mu S"description]
		\\
		&&&S^2\arrow[dd, "\mu"]
		\arrow[d, Leftarrow, shorten = 5, shift right = 12, "\alpha"]
		\\
		S^{3}\arrow[rd, "\mu S"']\arrow[rr, "S\mu"]
		&{}\arrow[d, Leftarrow,"\alpha"]
		&S^2\arrow[rd, "\mu"]
		&{}
		\\
		&S^2 \arrow[rr, "\mu"']
		&{}& S
	\end{tikzcd}$$
	
	$$\begin{tikzcd}[font=\fontsize{9}{6}]
		&&{}\arrow[dd, Rightarrow, shorten = 10, "\rho S"]
		&&S^{2}\arrow[rrdd, "\mu"]\arrow[dddd, Rightarrow, shorten = 15, "\alpha"]
		\\
		\\
		S^{2} \arrow[rrrruu, bend left = 30, "1_{S^2}"]
		\arrow[rrrrdd, bend right = 30, "1_{S^{2}}"']
		\arrow[rr, "S\eta S"]
		&& S^{3} \arrow[rruu, "\mu S"]\arrow[rrdd, "S\mu"]\arrow[dd, Rightarrow, shorten = 10, "S\lambda"]
		&&&&S&=&S^{2}\arrow[rr, bend left = 20, "\mu"name=A]\arrow[rr, bend right = 20, "\mu"'name=B] && S
		\\
		\\
		&&{}&&S^{2}\arrow[rruu, "\mu"]
		\arrow[from=A, to=B, Rightarrow, shorten = 5, "1_{\mu}"]
	\end{tikzcd}$$
\end{remark}

\begin{definition}\label{Gray Kleisli def as a left kan extension}
	Let $\mathfrak{K}$ be a $\mathbf{Gray}$-category and let $\mathbb{S}: \mathbf{Psmnd} \rightarrow \mathfrak{K}$ be a pseudomonad. 
	
	\begin{itemize}
		\item The \emph{$\mathbf{Gray}$-enriched Eilenberg-Moore pseudoadjunction} associated to $\mathbb{S}$ will be, if it exists, the right Kan extension of $\mathbb{S}$ along $J: \mathbf{Psmnd} \rightarrow \mathbf{Psadj}$. The image under $\mathbf{Ran}_{J}\mathbb{S}$ of $\mathbf{c}$ will be called the \emph{$\mathbf{Gray}$-categorical Eilenberg-Moore object of $\mathbb{S}$} and will be denoted as $A^S$ where $A:={\mathbb{S}\left(\mathbf{m}\right)}$ and $S:={\mathbb{S}\mathbf{1}}$.
		\item The \emph{$\mathbf{Gray}$-enriched Kleisli pseudoadjunction} associated to $\mathbb{S}: \mathbf{Psmnd} \rightarrow \mathfrak{K}$ will be, if it exists, the left Kan extension of $\mathbb{S}$ along $J: \mathbf{Psmnd} \rightarrow \mathbf{Psadj}$. The image under $\mathbf{Lan}_{J}\mathbb{S}$ of $\mathbf{c}$ will be called the \emph{$\mathbf{Gray}$-categorical Kleisli object of $\mathbb{S}$} and will be denoted ${\mathbb{S}\left(\mathbf{m}\right)}_{\mathbb{S}\mathbf{1}}$.
	\end{itemize}
\end{definition}

\begin{notation}\label{pseudomonad notation}
	When the rest of the data of a pseudomonad $\mathbb{S}$ is clear from context, $\mathbb{S}$ will be denoted by $\left(\mathbb{S}\left(\mathbf{m}\right), \mathbb{S}\mathbf{1}\right)$. In most of what follows, we will be considering a pseudomonad in $\mathbf{Gray}$ for which we fix the following notation.
	
	\begin{itemize}
		\item  The base of our pseudomonad will be a $2$-category $\mathcal{A}$, 
		\item The underlying endomorphism will be a $2$-functor $S: \mathcal{A} \rightarrow \mathcal{A}$, 
		\item The unit and multiplication will be pseudonatural transformations $\eta: 1_\mathcal{A} \Rightarrow S$ and $\mu: S^2 \Rightarrow S$ respectively,
		\item The left and right unitors will be invertible modifications $\lambda: 1_{S} \Rrightarrow \mu. \eta_{S}$ and $\rho: \mu.S\eta \Rrightarrow 1_{S}$ respectively,
		\item The associator will be an invertible modification $\alpha: \mu.S\mu \Rrightarrow \mu.\mu_{S}$.
	\end{itemize}  
\end{notation}

\begin{remark}\label{Gray Kleilsi's universal cocone as a weighted colimit}
	Given a pseudomonad $\left(\mathcal{A}, S\right): \mathbf{Psmnd} \rightarrow \mathbf{Gray}$, we can re-express the universal properties of $\mathcal{A}^S$ and $\mathcal{A}_S$ in terms of weighted limits and colimits, and describe their universal cones and cocones.
	
	\begin{itemize}
		\item The $\mathbf{Gray}$-categorical Eilenberg Moore object $\mathcal{A}^S$ can equivalently be described as the $\mathbf{Gray}$-categorical limit of $\left(\mathcal{A}, S\right)$ weighted by the $\mathbf{Gray}$-functor displayed below.
		
		$$\begin{tikzcd}
			\mathbf{Psmnd} \arrow[rr, "J"] && \mathbf{Psadj} \arrow[rr, "\mathbf{Psadj}\left(\mathbf{c}{,} -\right)"] &&\mathbf{Gray}
		\end{tikzcd}$$
		\item The $\mathbf{Gray}$-categorical Kleisli object $\mathcal{A}_S$ can equivalently be described as the $\mathbf{Gray}$-categorical colimit of $\left(\mathcal{A}, S\right)$ weighted by the $\mathbf{Gray}$-functor displayed below.
		$$\begin{tikzcd}
			\mathbf{Psmnd}^\text{op} \arrow[rr, "J^\text{op}"] && \mathbf{Psadj}^\text{op} \arrow[rr, "\mathbf{Psadj}\left(-{,} \mathbf{c}\right)"] &&\mathbf{Gray}
		\end{tikzcd}$$ 
	\end{itemize} 
	As per Proposition \ref{EM universal property}, $\mathcal{A}^S$ is precisely the $2$-category of pseudoalgebras, pseudomorphisms and pseudoalgebra $2$-cells described in Section \ref{Subsection free pseudoalgebras}. Now for $\mathcal{A}_S$ we have for every $\mathcal{B} \in \mathbf{Gray}$ the universal property exhibiting isomorphism of $2$-categories depicted below, which varies $\mathbf{Gray}$-naturally in $\mathcal{B}$.
	\\
	$$ \Phi: \mathbf{Gray}\left(\mathcal{A}_S {,} \mathcal{B}\right) \rightarrow {[}\mathbf{Psmnd}{,}\mathbf{Gray}{]}\big(\mathbf{Psadj}\left(J-{,}\mathbf{c}\right){,}\mathbf{Gray}\left(\left(\mathcal{A}{,}S\right)-{,}\mathcal{B}\right)\big)$$

\end{remark}

\noindent As observed in Remark 4.7 of \cite{Formal Theory of Pseudomonads}, if one attempts to take $\mathcal{A}_{S}$ to be free pseudoalgebras then the appropriate universal property does not hold. The analogous $2$-functor is only a weak biequivalence in the sense of Definition \ref{definition weak biequivalence}, to follow.

\begin{definition}\label{definition weak biequivalence}
	Call a $2$-functor $F: \mathcal{A} \rightarrow \mathcal{B}$ a \emph{weak biequivalence} if it is biessentially surjective on objects, and for each $X, Y \in \mathcal{A}$ the functor $F_{X, Y}: \mathcal{A}\left(X, Y\right) \rightarrow \mathcal{B}\left(FX, FY\right)$ is an equivalence of categories.
\end{definition}

\subsection{The presentation of the Kleisli object of a pseudomonad in $\mathbf{Gray}$}\label{Subsection the presentation of the Kleisli object of a pseudomonad}

	\noindent By the coherence theorem for pseudoalgebras, pseudomorphisms, and pseudoalgebra $2$-cells (Proposition 4.2 of \cite{Coherent Approach to Pseudomonads}) the codomain of $\Phi$ can be identified with $ {\mathbf{Gray}\left(\mathcal{A}{,}\mathcal{B}\right)}^{\mathbf{Gray}\left(S{,} \mathcal{B}\right)}$. Taking $\mathcal{B}= \mathcal{A}_S$ we can describe the universal cocone determining the $\mathbf{Gray}$-enriched Kleisli object of $\left(\mathcal{A}, S\right)$. This cocone is the image under $\Phi$ of the identity $2$-functor on $\mathcal{A}_S$. Hence this universal cocone consists of the $2$-category $\mathcal{A}_S$ and a pseudoalgebra for the pseudomonad $\left(\mathbf{Gray}\left(\mathcal{A}, \mathcal{A}_S\right), \mathbf{Gray}\left(S, \mathcal{A}_S\right)\right)$. Note that by contravariance of $\mathbf{Gray}\left(-, \mathcal{A}_{S}\right)$, the roles of $\lambda$ and $\rho$ are swapped.
	\\
	\\
	\noindent The pseudoalgebra corresponding to the universal cocone consists of a $2$-functor $F: \mathcal{A} \rightarrow \mathcal{A}_S$, a pseudonatural transformation $\varepsilon: FS \Rightarrow F$ and two invertible modifications $u$ and $m$ as depicted below.
	
	$$\begin{tikzcd}[column sep= 15]
		\mathcal{A} \arrow[dd,"1_\mathcal{A}"'{name=A},bend right =60] \arrow[dd,"S"{name=B}] \arrow[rrd,"F"{name=C},bend left] 
		\\
		&& \mathcal{A}_{S}
		\\
		\mathcal{A} \arrow[rru,"F"'{name=D},bend right]
		\arrow[Rightarrow,from=A,to=B,"\eta",shorten=2mm]
		\arrow[Rightarrow,from=D,to=C,"\varepsilon"',shorten=5mm,shift left=5]
	\end{tikzcd} \begin{tikzcd}[column sep = 15]
		{} \arrow[r,Rightarrow,"u"] \arrow[r, no head]
		& {}
	\end{tikzcd} \begin{tikzcd}[column sep = 12]
		\mathcal{A} \arrow[rrd,"F"] \arrow[dd,"1_\mathcal{A}"']
		\\
		& {=}
		& \mathcal{A}_{S}
		\\
		\mathcal{A} \arrow[rru,"F"']
	\end{tikzcd} $$
	
	$$\begin{tikzcd}
		\mathcal{A} \arrow[rrd,"F"{name=A},bend left] \arrow[d,"S"']
		\\
		\mathcal{A} \arrow[rr,"F"{name=B}] \arrow[d,"S"']
		&& \mathcal{A}_{S}
		\\
		\mathcal{A} \arrow[rru,"F"'{name=C},bend right]
		\arrow[Rightarrow,from=B,to=A,"\varepsilon"',shorten=2mm]
		\arrow[Rightarrow,from=C,to=B,"\varepsilon"',shorten=2mm]
	\end{tikzcd} \begin{tikzcd}
		{} \arrow[r,Rightarrow,"m"'] \arrow[r, no head]
		& {}
	\end{tikzcd} \begin{tikzcd}
		\mathcal{A} \arrow[d,"S"'{name=A}] \arrow[dd,bend left=60,"S"{name=B}] \arrow[rrd,"F"{name=C},bend left]
		\\
		\mathcal{A} \arrow[to=B,Rightarrow,"\mu"] \arrow[d,"S"']
		&& \mathcal{A}_{S}
		\\
		\mathcal{A} \arrow[rru,"F"'{name=D},bend right]
		\arrow[Rightarrow,from=D,to=C,"\varepsilon"',shift left=1,shorten=5mm]
	\end{tikzcd}$$
	
	\noindent The pseudoalgebra axioms then become the following equalities of $2$-cells in $\mathbf{Gray}\left(\mathcal{A}, \mathcal{A}_{S}\right)$.
	\\
	\\
	(Right unit coherence)
	\\
	\\
	\begin{tikzcd}[font=\fontsize{9}{6}]
		&&{}\arrow[d, Rightarrow, shorten = 5, "u_{S}"]
		&& FS \arrow[rrd,"\varepsilon"] \arrow[dd,Rightarrow,shorten=6mm,"m"]
		\\
		FS \arrow[rr,"F{\eta}_S"']
		\arrow[rrrru, bend left, "1_{FS}"]
		\arrow[rrrrd, bend right, "1_{FS}"']
		&& FS^2
		\arrow[d, shorten = 5, Rightarrow, "F\lambda"]
		\arrow[rru,"\varepsilon_{S}"] \arrow[rrd,"F_\mu"']
		&&&& F &&&=&1_\varepsilon
		\\
		&&{}
		&& FS \arrow[rru,"\varepsilon"']
	\end{tikzcd}
	\\
	\\
	(Associativity coherence)
	\\
	\\
	\begin{tikzcd}[font=\fontsize{9}{6}]
		FS^3 \arrow[rr,"{\varepsilon}_{S^2}"]
		\arrow[rrd,"F{\mu}_S"] \arrow[dd,"FS\mu"']
		&& FS^2 \arrow[rrd,"{\varepsilon}_S"] \arrow[d,Rightarrow,"m_S",shorten=1mm]
		\\
		&& FS^2 \arrow[rr,"{\epsilon}_S"'] \arrow[lld,"F\alpha",Rightarrow,shorten=7mm] \arrow[dd,"F\mu"']
		&& FS \arrow[lldd,"m"',Rightarrow,shorten=7mm]
		\arrow[dd,"\varepsilon"]
		\\
		FS^2 \arrow[drr,"F\mu"']
		\\
		&& FS \arrow[rr,"\varepsilon"']
		&& F
	\end{tikzcd} = \begin{tikzcd}[font=\fontsize{9}{6}]
		FS^3 \arrow[rr,"{\varepsilon}_{S^2}"] \arrow[dd,"FS\mu"']
		&& FS^2 \arrow[rrd,"{\varepsilon}_S"] \arrow[dd,"F\mu"'] \arrow[lldd,"{\varepsilon}_\mu",Rightarrow,shorten=7mm] \arrow[rrd,"{\varepsilon}_S"]
		\\
		&&&& FS \arrow[lld,"m"',Rightarrow,shorten=7mm]
		\arrow[dd,"\varepsilon"]
		\\
		FS^2 \arrow[drr,"F\mu"'] \arrow[rr,"{\varepsilon}_S"]
		&& FS \arrow[rrd,"\varepsilon"] \arrow[d,Rightarrow,"m"',shorten=1mm]
		\\
		&& FS \arrow[rr,"\varepsilon"']
		&& F
	\end{tikzcd}

\begin{remark}\label{Remark derivable law for pseudoalgebras}
	By Lemma 9.1 of \cite{Doctrines fully faithful adjoint string}, there is also the following derivable left unit coherence for the pseudoalgebra $\left(F, \varepsilon, u, m\right)$.
	
	$$\begin{tikzcd}[font=\fontsize{9}{6}]
		FS \arrow[rr,"FS\eta"{name=A}] \arrow[rrdd,"\text{id}"'{name=B}]
		&& FS^2 \arrow[dd,"F_{\mu}"] \arrow[rr,"{\varepsilon}_S"]
		&& FS \arrow[dd,"\varepsilon"] \arrow[lldd,Rightarrow,"m",shorten=5mm]
		\\
		\\
		&& FS \arrow[rr,"\varepsilon"']
		&& F
		\arrow[from=A,to=B,Rightarrow,"F\rho",shorten=2mm,shift left=2]
	\end{tikzcd} = \begin{tikzcd}
		FS \arrow[rr,"FS\eta"] \arrow[rrdd,"\varepsilon"']
		&& FS^2 \arrow[rr,"{\epsilon}_{S}"] \arrow[dd, Rightarrow,"{\varepsilon}_{\eta}"',shorten=4mm]
		&& FS \arrow[dd,"\varepsilon"']
		\\
		\\
		&& F \arrow[rruu,"F\eta"{name=A}] \arrow[rr,"\text{id}"'{name=B}]
		&& F \arrow[from=A,to=B,Rightarrow,shorten=2mm,shift left=3,"u"']
	\end{tikzcd}$$
	
\end{remark}

\begin{remark}\label{Gray Kleisli generators and relations}
	The description of a universal cocone determining $\mathcal{A}_S$ from Remark \ref{Gray Kleilsi's universal cocone as a weighted colimit} gives a presentation for $\mathcal{A}_S$ in terms of generators and relations. Specifically, $\mathcal{A}_S$ is constructed by
	
	\begin{itemize}
		\item Including all data from $\mathcal{A}$ subject to composition conditions in $\mathcal{A}$,
		\item Freely adding a generating $1$-cell $\varepsilon_{X}: SX \rightarrow X$ for every $X \in \mathcal{A}$,
		\item Adding the following invertible $2$-cells 
		
		$$\begin{tikzcd}[font=\fontsize{9}{6}]
			SX \arrow[rr, "Sf"]
			\arrow[dd,"{\varepsilon}_X"']
			&& SY \arrow[dd,"{\varepsilon}_Y"]
			\\
			\\
			X \arrow[rr,"f"'] \arrow[rruu,Rightarrow,shorten=6mm,"{\varepsilon}_f"]
			&& Y&{}
		\end{tikzcd} \begin{tikzcd}[font=\fontsize{9}{6}]
			X \arrow[rr,"{\eta}_X"] \arrow[rrdd,"1_X"'{name=A},bend right]
			&& SX \arrow[dd,"{\varepsilon}_X"] \arrow[to=A,Rightarrow,"{u}_X",shorten=5mm]
			\\
			\\
			&& X&{}
		\end{tikzcd} \begin{tikzcd}[font=\fontsize{9}{6}]
			{S^2}X \arrow[rr,"{\epsilon}_{SX}"] \arrow[dd,"{\mu}_X"']
			&& SX \arrow[dd,"{\epsilon}_X"] \arrow[lldd,Rightarrow,shorten=5mm,"m_X"]
			\\
			\\
			SX \arrow[rr,"{\varepsilon}_X"']
			&& X
		\end{tikzcd}$$
		\item Specifying the evident relations to ensure that $\left(X, f\right)\mapsto \left(\varepsilon_{X}, \varepsilon_{f}\right)$ is a pseudonatural transformation, that $X \mapsto u_{X}$ and $X \mapsto m_{X}$ are invertible modifications, and that the left unit and associativity coherences hold. Note that each of these conditions are indeed equations between parallel pairs of $2$-cells in $\mathcal{A}_S$, and that the modification conditions for $u$ and $m$ on a morphism $f: X \rightarrow Y$ respectively say that the pair $\left(f, \varepsilon_{f}\right)$ satisfies the unit and multiplication axioms for a pseudomorphism.
	\end{itemize}
	
	\noindent The remaining data of the $\mathbf{Gray}$-enriched Kleisli pseudoadjunction can be described as follows.
	
	\begin{itemize}
		\item The $\mathbf{Gray}$-enriched Kleisli left pseudoadjoint $F: \mathcal{A} \rightarrow \mathcal{A}_S$ is the evident $2$-functor which includes data from $\mathcal{A}$. 
		\item The right pseudoadjoint $U$ sends any data in $\mathcal{A}_S$ from $\mathcal{A}$ to its image under $S$, sends $\varepsilon_{X}$ to $\mu_{X}$, $\varepsilon_{f}$ to $\mu_{f}$, $u_{X}$ to $\rho_{X}$ and $m_{X}$ to $\alpha_{X}$.
		\item The counit has component at $X$ given by $\varepsilon_{X}$, constraint at $f$ from $\mathcal{A}$ given by $\varepsilon_{f}$ and constraint at $\varepsilon_{X}$ given by $m_{X}$.
		\item The left triangulator has component at $X$ given by $u_{X}$.
	\end{itemize}  
	
\end{remark}

\section{The canonical comparison $2$-functor}\label{Section the canonical comparison induced by a pseudoadjunction splitting the pseudomonad}

\noindent The aim of this section is to show that the comparison $2$-functor from $\mathcal{A}_S$ induced by a pseudoadjunction giving rise to $\left(\mathcal{A}, S\right)$ is bi-fully faithful. We begin by describing this comparison in Remark \ref{comparison from gray kleisli}. 

\subsection{Comparison to pseudoalgebras}\label{Subsection comparison to pseudoalgebras}

\begin{remark}\label{comparison from gray kleisli}
	Let $\left(\bar{F}: \mathcal{A} \rightarrow \mathcal{B}, \bar{U}: \mathcal{B} \rightarrow \mathcal{A}, \eta, \bar{\varepsilon}, \lambda, \bar{u}: 1_{\bar{F}} \Rrightarrow \bar{\varepsilon}_{\bar{F}}.\bar{F}\eta\right)$ be a pseudoadjunction with induced pseudomonad $\left(\mathcal{A}, S\right)$. Then the canonical comparison $2$-functor $K:\mathcal{A}_S \rightarrow \mathcal{B}$ sends 
	\begin{itemize}
		\item Any data in $\mathcal{A}_S$ from $\mathcal{A}$ to its image under $\bar{F}: \mathcal{A} \rightarrow \mathcal{B}$,
		\item $\varepsilon_{X}$ to $\bar{\varepsilon}_{\bar{F}X}$
		\item The generating $2$-cells $\varepsilon_{f}$, $u_{X}$, and $m_{X}$ to $\bar{\varepsilon}_{\bar{F}f}$, $ \bar{u}_X$, and $ \bar{\varepsilon}_{\bar{\varepsilon}_{\bar{F}X}}$ respectively.
	\end{itemize}
\end{remark}

\noindent Before proving bi-fully faithfulness of $K$ we describe the concrete example of when the pseudoadjunction is Eilenberg-Moore.

\begin{example}\label{comparison to free pseudoalgebras}
	When the pseudoadjunction of Remark \ref{comparison from gray kleisli} is the Eilenberg-Moore pseudoadjunction then $K$ sends
	
	\begin{itemize}
		\item $X$ to $\left(SX, \mu_X, \lambda_{X}, \alpha_{X}\right)$, which will be abbreviated as $\left(SX, \mu_{X}\right)$,
		\item $f: X \rightarrow Y$ to $\left(f, \mu_{f}\right)$ whenever $f$ is a morphism in $\mathcal{A}$, and similarly for generating $2$-cells from $\mathcal{A}$.
		\item $\varepsilon_{X}$ to $\left(\mu_{X}, \alpha_{X}\right): \left(S^2X, \mu_{S^2 X}\right) \rightarrow \left(SX, \mu_{X}\right)$,
		\item The generating $2$-cells from $\mathcal{A}$ to their images under $S$, and the $2$-cells $\varepsilon_{f}$, $u_{X}$ and $m_{X}$ to the following $2$-cells.
	\end{itemize}
	$$\begin{tikzcd}[font=\fontsize{9}{6}]
		\left({S^2}X{,}{\mu}_{SX}\right) \arrow[rr, "\left({S^2}f{,}{\mu}_{Sf}\right)"]
		\arrow[dd,"\left({\mu}_X{,}{\alpha}_X\right)"' description]
		&& \left({S^2}Y{,}{\mu}_{SY}\right) \arrow[dd,"\left({\mu}_Y{,}{\alpha}_Y\right)" description]
		\\
		\\
		\left(SX{,}{\mu}_X\right) \arrow[rr,"\left(Sf{,}{\mu}_{f}\right)"'] \arrow[rruu,Leftarrow,shorten=6mm,"{\mu}_f"]
		&& \left(SY{,}{\mu}_Y\right)
	\end{tikzcd} 
	\begin{tikzcd}[font=\fontsize{9}{6}]
		\left(SX{,}{\mu}_X\right) \arrow[rr,"\left(S{\eta}_{X}{,}{\mu}_{{\eta}_{X}}\right)"] \arrow[rrdd,"{\text{id}}_{\left(SX{,}{\mu}_X\right)}"'{name=A},bend right]
		&& \left({S^2}X{,}{\mu}_{SX}\right) \arrow[dd,"\left({\mu}_{X}{,}{\alpha}_{X}\right)" description] \arrow[to=A,Rightarrow,"{\rho}_X",shorten=5mm]
		\\
		\\
		&& \left(SX{,}{\mu}_X\right)
	\end{tikzcd}\begin{tikzcd}[font=\fontsize{9}{6}]
		\left({S^3}X{,}{\mu}_{{S^2}X}\right) \arrow[rr,"\left({\mu}_{SX}{,}{\alpha}_{SX}\right)"] \arrow[dd,"\left(S{\mu}_{X}{,}{\mu}_{{\mu}_{X}}\right)"' description]
		&& \left({S^2}X{,}{\mu}_{{S}X}\right) \arrow[dd,"\left({\mu}_{X}{,}{\alpha}_{X}\right)" description] \arrow[lldd,Rightarrow,shorten=5mm,"{\alpha}_X"]
		\\
		\\
		\left({S^2}X{,}{\mu}_{{S}X}\right) \arrow[rr,"\left({\mu}_{X}{,}{\alpha}_{X}\right)"']
		&& \left({S}X{,}{\mu}_{X}\right)
	\end{tikzcd}$$ 		
\end{example}

\begin{notation}\label{Notation free pseudoalgebras}
	Note that the $2$-functor $K$ of Example \ref{comparison to free pseudoalgebras} factorises through the full sub-$2$-category of $\mathcal{A}^S$ on those pseudoalgebras which are free on an object of $\mathcal{A}$. We will denote this $2$-category by $\mathbf{FreePsAlg}\left(\mathcal{A}, S\right)$.
\end{notation}

\subsection{Bi-fully-faithfulness of the comparison}\label{Subsection bi-fully-faithfulness} We now prove that all comparison $2$-functors of the form $K: \mathcal{A}_S \rightarrow \mathcal{B}$ described in Remark \ref{comparison from gray kleisli} are bi-fully faithful, which is to say that their actions on hom-categories are equivalences. 

\begin{theorem}\label{comparison from Gray Kleisli is bi-fully-faithful}
	The $2$-functor $K: \mathcal{A}_S \rightarrow \mathcal{B}$ of Remark \ref{comparison from gray kleisli} is bi-fully faithful.
\end{theorem}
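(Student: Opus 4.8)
The plan is to factor each hom-functor $K_{X,Y}\colon \mathcal{A}_S(X,Y)\to\mathcal{B}(\bar F X,\bar F Y)$ through the hom-equivalences supplied by the two pseudoadjunctions that split $(\mathcal{A},S)$, and then to conclude by the two-out-of-three property for equivalences of categories. The governing idea is that a pseudoadjunction between $2$-categories induces honest equivalences of hom-categories $\mathcal{B}(\bar F X,B)\simeq\mathcal{A}(X,\bar U B)$, so the one-dimensional statement of bi-fully-faithfulness can be reduced to this formal fact together with a compatibility check on $K$.

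Concretely, recall from Remark~\ref{Gray Kleisli generators and relations} that the $\mathbf{Gray}$-enriched Kleisli pseudoadjunction $F\dashv U$ has left adjoint $F\colon\mathcal{A}\to\mathcal{A}_S$ equal to the identity on objects, counit with component $\varepsilon_Y\colon SY\to Y$, and invertible left triangulator $u$; in particular every object of $\mathcal{A}_S$ is $FX=X$ for some $X\in\mathcal{A}$, so $KX=\bar F X$, and $UFY=SY$ on objects. Being a pseudoadjunction, $F\dashv U$ yields for each $X,Y$ an equivalence of hom-categories whose backward transpose $\Psi^{\mathrm{Kl}}_{X,Y}\colon\mathcal{A}(X,SY)\to\mathcal{A}_S(X,Y)$ is $g\mapsto\varepsilon_Y\circ Fg$, that is $\mathcal{A}_S(X,\varepsilon_Y)\circ F_{X,SY}$. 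Symmetrically, the given splitting $\bar F\dashv\bar U$ has $\bar U\bar F Y=SY$ and backward transpose $\Psi_{X,Y}\colon\mathcal{A}(X,SY)\to\mathcal{B}(\bar F X,\bar F Y)$ given by $g\mapsto\bar\varepsilon_{\bar F Y}\circ\bar F g$, which is an equivalence for the same reason.

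Since $K$ is a strict $2$-functor with $KF=\bar F$ and, by Remark~\ref{comparison from gray kleisli}, $K\varepsilon_Y=\bar\varepsilon_{\bar F Y}$, postcomposition is preserved on the nose, so $K_{X,Y}\circ\Psi^{\mathrm{Kl}}_{X,Y}=\mathcal{B}(\bar F X,\bar\varepsilon_{\bar F Y})\circ\bar F_{X,SY}=\Psi_{X,Y}$; the triangle commutes. As $\Psi^{\mathrm{Kl}}_{X,Y}$ and $\Psi_{X,Y}$ are both equivalences, two-out-of-three forces $K_{X,Y}$ to be an equivalence for all $X,Y$, which is exactly bi-fully-faithfulness of $K$.

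The step carrying all the content is the assertion that the backward transposes are equivalences, i.e.\ that the Kleisli data of Remark~\ref{Gray Kleisli generators and relations} really do assemble into a pseudoadjunction inducing hom-equivalences; this is where coherence for pseudomonads enters, through the universal property of Definition~\ref{Gray Kleisli def as a left kan extension}. A more hands-on alternative would bypass the abstract transposes and work directly from the presentation: one shows every $1$-cell of $\mathcal{A}_S(X,Y)$ is isomorphic to a canonical form $\varepsilon_Y\circ g$ with $g\in\mathcal{A}(X,SY)$, and that $2$-cells between canonical forms biject with $2$-cells in $\mathcal{A}(X,SY)$, so that $\Psi^{\mathrm{Kl}}_{X,Y}$ is manifestly an equivalence. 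This is precisely the categorification of the one-dimensional computation in Subsection~\ref{Subsection Kleisli via formal algebra structures}, and the reduction to canonical form is the only laborious point.
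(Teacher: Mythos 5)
Your proposal is correct and takes essentially the same approach as the paper: both arguments sandwich $K_{X,Y}$ between the hom-category equivalences supplied by the Kleisli pseudoadjunction $F\dashv U$ and the splitting pseudoadjunction $\bar F\dashv\bar U$, and then conclude by two-out-of-three for equivalences of categories. The only (cosmetic) difference is that you compose the two \emph{backward} transposes, so your triangle $K_{X,Y}\circ\Psi^{\mathrm{Kl}}_{X,Y}=\Psi_{X,Y}$ commutes strictly, whereas the paper pairs the forward transpose of $\bar F\dashv\bar U$ with the backward transpose of the Kleisli pseudoadjunction, so its comparison diagram commutes only up to a pasting of invertible natural transformations.
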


\begin{proof}
	Consider the following pasting diagram in $\mathbf{Cat}$.
	
	$$\begin{tikzcd}[column sep = 15, font=\fontsize{9}{6}]
		\mathcal{B}\left({FX}{,}{FY}\right)
		\arrow[rr,"U", blue]
		\arrow[rdddd,"\mathcal{B}\left({\varepsilon_{FX}}{,}{1}\right)"]
		\arrow[dddddd,bend right = 30, "1"']
		&
		{}
		\arrow[dddd,Rightarrow,shorten=40,shift left=8,"\varepsilon"]
		&
		\mathcal{A}\left({UFX}{,}{UFY}\right)
		\arrow[rr,"\mathcal{A}\left({\eta_{X}}{,}{1}\right)", blue]
		\arrow[dd,"F"]
		&&
		\mathcal{A}\left({X}{,}{UFY}\right)
		\arrow[dddddd,"F_S", red]
		\arrow[ldddd,"F"']
		\\
		&&&
		=
		\\
		&
		{} \arrow[dd,shift right=30,Rightarrow,shorten=8,"\mathcal{B}\left({\lambda_X}{,}{1}\right)"]
		&
		\mathcal{B}\left({FUFX}{,}{FUFY}\right)
		\arrow[ldd,"\mathcal{B}\left(1{,}{\varepsilon_{FY}}\right)"']
		\arrow[rdd,"\mathcal{B}\left(F{\eta_{X}}{,}{1}\right)"]
		\\
		\\
		&
		\mathcal{B}\left({FUFX}{,}{FY}\right)
		\arrow[ldd,"\mathcal{B}\left(F{\eta_{X}}{,}{1}\right)" description]
		&
		=
		&
		\mathcal{B}\left({FX}{,}{FUFY}\right)
		\arrow[llldd,"\mathcal{B}\left(1{,}{\varepsilon_{FY}}\right)" near start]
		\arrow[r,Rightarrow,no head,shorten=29.3]
		\arrow[dd,Rightarrow,shorten=10,"\varepsilon"']
		&
		{}
		\\
		\\
		\mathcal{B}\left({FX}{,}{FY}\right)
		&&
		\mathcal{A}_{S}\left({{F_S}X}{,}{{F_S}Y}\right)
		\arrow[ll,"K"]
		&
		{}
		&
		\mathcal{A}_{S}\left({{F_S}X}{,}{{F_S}UFY}\right)
		\arrow[ll,"\mathcal{A}_{S}\left({1}{,}{{\varepsilon}_{FY}}\right)", red]
		\arrow[luu,"K"]
	\end{tikzcd}$$
	
	\noindent Observe that all $2$-cells appearing in this pasting are invertible, so by the two-out-of-three property of equivalences of categories it suffices to show that both of the following statements are true.
	\begin{enumerate}
		\item The composite of the functors depicted in \color{blue}blue\color{black} \hspace{1mm} are an equivalence.
		\item The composite of the functors depicted in \color{red}red\color{black} \hspace{1mm} are an equivalence.
	\end{enumerate}
	\noindent But (1) holds by the pseudoadjunction $F \dashv U$ and (2) holds by the pseudoadjunction $\bar{F} \dashv \bar{U}$. 
\end{proof}

\noindent The following result follows easily from Theorem \ref{comparison from Gray Kleisli is bi-fully-faithful}. It can be seen as categorifying the well-known analogous statement for monads, with biessential surjectivity on objects replacing surjectivity on objects, and weak biequivalences replacing equivalences of categories.

\begin{theorem}\label{when is comparison from gray kleisli a biequivalence}
	For the pseudoadjunction described in Remark \ref{comparison from gray kleisli}, the following conditions are equivalent \begin{enumerate}
		\item The comparison $2$-functor $K: \mathcal{A}_S \rightarrow \mathcal{B}$ is a weak biequivalence.
		\item $\bar{F}$ is biessentially surjective on objects.
		\item For every $2$-category $\mathcal{C}$, the canonical comparison depicted below is a weak biequivalence.
	\end{enumerate}
	$$\begin{tikzcd}
		\mathbf{Gray}\left(\mathcal{B}{,} \mathcal{C}\right) \arrow[rr, "\mathbf{Gray}\left(K{,} \mathcal{C}\right)"] && \mathbf{Gray}\left(\mathcal{A}_{S}{,} \mathcal{C}\right) \arrow[rr, "\cong"] && {\mathbf{Gray}\left(\mathcal{A}{,} C\right)}^{\mathbf{Gray}\left(S{,} \mathcal{C}\right)}
	\end{tikzcd} $$
\end{theorem}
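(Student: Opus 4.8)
The plan is to reduce every statement to the bi-fully-faithfulness of $K$ from Theorem \ref{comparison from Gray Kleisli is bi-fully-faithful}, supplemented by the coherence principle that the inclusion of $2$-functors into pseudofunctors is a biequivalence of hom-$2$-categories.

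First I would settle $(1)\Leftrightarrow(2)$. Since $K$ is bi-fully faithful by Theorem \ref{comparison from Gray Kleisli is bi-fully-faithful}, its action on each hom-category is already an equivalence, so by Definition \ref{definition weak biequivalence} it is a weak biequivalence exactly when it is biessentially surjective on objects. The objects of $\mathcal{A}_S$ are precisely those of $\mathcal{A}$, and by Remark \ref{comparison from gray kleisli} the object $X$ is sent to $\bar{F}X$; hence $K$ and $\bar{F}$ have the same essential image on objects, and $K$ is biessentially surjective if and only if $\bar{F}$ is.

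For $(1)\Leftrightarrow(3)$ I would pass through $\mathbf{Bicat}$. Using the identification of the codomain of the universal-property isomorphism with ${\mathbf{Gray}\left(\mathcal{A},\mathcal{C}\right)}^{\mathbf{Gray}\left(S,\mathcal{C}\right)}$, condition $(3)$ says exactly that precomposition $\mathbf{Gray}\left(K,\mathcal{C}\right)$ is a weak biequivalence for every $\mathcal{C}$. For any $2$-categories $\mathcal{X},\mathcal{C}$ the inclusion $\iota_{\mathcal{X}}\colon \mathbf{Gray}\left(\mathcal{X},\mathcal{C}\right) \to \mathbf{Bicat}\left(\mathcal{X},\mathcal{C}\right)$ is the identity on pseudonatural transformations and modifications, and is biessentially surjective on objects by coherence for pseudofunctors; thus each $\iota_{\mathcal{X}}$ is a biequivalence. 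These inclusions fit into the commuting square $\iota_{\mathcal{A}_S}\circ \mathbf{Gray}\left(K,\mathcal{C}\right) = \mathbf{Bicat}\left(K,\mathcal{C}\right)\circ \iota_{\mathcal{B}}$. If $(1)$ holds then $K$ is a biequivalence, so precomposition $\mathbf{Bicat}\left(K,\mathcal{C}\right)$ is a biequivalence, and two-out-of-three for weak biequivalences in this square forces $\mathbf{Gray}\left(K,\mathcal{C}\right)$ to be a weak biequivalence for every $\mathcal{C}$, giving $(3)$. Conversely, assuming $(3)$, evaluating at $\mathcal{C}=\mathcal{A}_S$ and using biessential surjectivity of $\mathbf{Gray}\left(K,\mathcal{A}_S\right)$ produces a genuine $2$-functor $G\colon \mathcal{B} \to \mathcal{A}_S$ with $GK \simeq 1_{\mathcal{A}_S}$; evaluating at $\mathcal{C}=\mathcal{B}$ and using that the bi-fully-faithful $2$-functor $\mathbf{Gray}\left(K,\mathcal{B}\right)$ reflects equivalences, together with $KGK \simeq K$, yields $KG \simeq 1_{\mathcal{B}}$. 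Hence $K$ is a biequivalence and in particular a weak biequivalence, which is $(1)$.

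The main obstacle is the forward implication $(1)\Rightarrow(3)$. The difficulty is that the biequivalence inverse of the weak biequivalence $K$ is in general only a pseudofunctor rather than a morphism of $\mathbf{Gray}$, as noted in the discussion of Example 3.1 of \cite{Quillen 2-cat}, so one cannot simply precompose with it to invert $\mathbf{Gray}\left(K,\mathcal{C}\right)$ inside $\mathbf{Gray}$. The device that circumvents this is the passage to $\mathbf{Bicat}$ via the inclusions $\iota_{\mathcal{X}}$, where precomposition with a biequivalence behaves well; the step demanding the most care is verifying that each $\iota_{\mathcal{X}}$ is biessentially surjective on objects, that is, the coherence statement that every pseudofunctor between $2$-categories is equivalent to a $2$-functor, and checking that two-out-of-three and reflection of equivalences apply to weak biequivalences as used above. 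Each of these is standard, so once they are in place the theorem does indeed follow easily from Theorem \ref{comparison from Gray Kleisli is bi-fully-faithful}.
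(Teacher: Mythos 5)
Your proof of $(1)\Leftrightarrow(2)$ is correct and is essentially the paper's argument: the paper phrases it as the cancellation property of biessentially-surjective-on-objects $2$-functors applied to $\bar{F}=KF$, which amounts to your observation that $K$ and $\bar{F}$ have the same action on objects, combined with Theorem \ref{comparison from Gray Kleisli is bi-fully-faithful}. Your argument for $(3)\Rightarrow(1)$ --- evaluating at $\mathcal{C}=\mathcal{A}_S$ to produce a $2$-functor $G$ with $GK\simeq 1_{\mathcal{A}_S}$, then at $\mathcal{C}=\mathcal{B}$ and using that the bi-fully-faithful $\mathbf{Gray}\left(K,\mathcal{B}\right)$ reflects equivalences to conclude $KG\simeq 1_{\mathcal{B}}$ --- is also correct, and is a pleasantly elementary alternative to the paper's appeal to joint reflection of weak equivalences by the representables $\mathbf{Gray}\left(-,\mathcal{C}\right)$.

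The genuine gap is in $(1)\Rightarrow(3)$. Your argument rests on the claim that for all $2$-categories $\mathcal{X},\mathcal{C}$ the inclusion $\iota_{\mathcal{X}}\colon\mathbf{Gray}\left(\mathcal{X},\mathcal{C}\right)\to\mathbf{Bicat}\left(\mathcal{X},\mathcal{C}\right)$ is biessentially surjective on objects, i.e.\ that every pseudofunctor between $2$-categories is pseudonaturally equivalent to a $2$-functor. This is false, not merely nonstandard. Take $\mathcal{X}=\Sigma\left(\mathbb{Z}/2\right)$, the $2$-category with one object, with $1$-cells the elements of $\mathbb{Z}/2$, and with only identity $2$-cells; take $\mathcal{C}=\Sigma^{2}\left(\mathbb{Z}/2\right)$, the $2$-category with one object, one $1$-cell, and $2$-cells the elements of $\mathbb{Z}/2$. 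A pseudofunctor $\mathcal{X}\to\mathcal{C}$ amounts to a normalised $2$-cocycle on $\mathbb{Z}/2$ with values in $\mathbb{Z}/2$ (its compositor), and a pseudonatural transformation between two such pseudofunctors amounts to a coboundary between the corresponding cocycles; moreover every pseudonatural transformation here is an equivalence. Hence the objects of $\mathbf{Bicat}\left(\mathcal{X},\mathcal{C}\right)$ up to equivalence are classified by $H^{2}\left(\mathbb{Z}/2;\mathbb{Z}/2\right)\cong\mathbb{Z}/2$, whereas the unique $2$-functor $\mathcal{X}\to\mathcal{C}$ represents the trivial class. So $\iota_{\mathcal{X}}$ is not biessentially surjective on objects. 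Strictification of pseudofunctors does hold when the domain is cofibrant in the Lack model structure (equivalently, has free underlying category), but in the theorem neither $\mathcal{A}_S$ nor $\mathcal{B}$ is assumed cofibrant, since $\mathcal{A}$ and the splitting pseudoadjunction are arbitrary. Consequently the commuting square through $\mathbf{Bicat}$ gives you nothing: three of its sides are no longer known to be weak biequivalences, and your two-out-of-three step cannot be run.

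This failure of strictification is exactly the phenomenon that the Lack model structure is designed to detect, and it is why the paper's proof of $(1)\Leftrightarrow(3)$ never leaves $\mathbf{Gray}$: it argues that, all objects of $2$-$\mathbf{Cat}$ being fibrant in the monoidal model structure of \cite{Quillen 2-cat}, the representables $\mathbf{Gray}\left(-,\mathcal{C}\right)$ preserve and jointly reflect weak biequivalences, which yields both implications at once. To repair your argument you would need to replace the passage through $\mathbf{Bicat}$ in the preservation direction $(1)\Rightarrow(3)$ by a $\mathbf{Gray}$-internal or model-categorical argument of this kind; the reflection direction $(3)\Rightarrow(1)$ you already have.
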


\begin{proof}
	Note that $\bar{F} = KF$ with $F$ biessentially surjective on objects. Thus $(a) \iff (b)$ follows from the cancellation property of biessentially surjective on objects functors. The equivalence $(a) \iff (c)$ follows by observing that, since all $2$-categories are fibrant, the representables $\mathbf{Gray}\left(-, \mathcal{C}\right)$ preserve and jointly reflect weak equivalences in the Lack model structure of \cite{Quillen 2-cat}, and these are precisely weak biequivalences.
\end{proof}

\subsection{Applications}\label{Subsection Applications} A simpler description of the $2$-cells in $\mathcal{A}_S$ is given in Corollary \ref{2-cells in A_S are 2-cells of free pseudoalgebras}, to follow.

\begin{corollary}\label{2-cells in A_S are 2-cells of free pseudoalgebras}
	Let $f, g \in \mathcal{A}_{S}\left(X, Y\right)$ be arbitrary morphisms. The $2$-cells $\phi: f \Rightarrow g$ in $\mathcal{A}_{S}$ may equivalently be described as $2$-cells between the pseudomorphisms of free pseudoalgebras corresponding to $Kf$ and $Kg$.
\end{corollary}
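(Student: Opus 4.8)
The plan is to read the statement off directly from Theorem \ref{comparison from Gray Kleisli is bi-fully-faithful}, specialised to the Eilenberg--Moore pseudoadjunction. First I would take the splitting in Remark \ref{comparison from gray kleisli} to be the Eilenberg--Moore pseudoadjunction $F^S \dashv U^S$ of Proposition \ref{EM universal property}, which indeed induces the pseudomonad $\left(\mathcal{A}, S\right)$. The associated comparison is then the $2$-functor $K: \mathcal{A}_S \to \mathcal{A}^S$ computed in Example \ref{comparison to free pseudoalgebras}, which by Notation \ref{Notation free pseudoalgebras} factors through the full sub-$2$-category $\mathbf{FreePsAlg}\left(\mathcal{A}, S\right)$ and sends each object $X$ to the free pseudoalgebra $\left(SX, \mu_X\right)$.

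Next I would invoke Theorem \ref{comparison from Gray Kleisli is bi-fully-faithful}, which asserts that each functor $K_{X, Y}: \mathcal{A}_S\left(X, Y\right) \to \mathcal{A}^S\left(KX, KY\right)$ is an equivalence of categories. Since an equivalence of categories is in particular fully faithful, for any parallel pair $f, g \in \mathcal{A}_S\left(X, Y\right)$ the induced function on hom-sets
$$ \mathcal{A}_S\left(X, Y\right)\left(f, g\right) \longrightarrow \mathcal{A}^S\left(KX, KY\right)\left(Kf, Kg\right) $$
is a bijection. The left-hand side is by definition the set of $2$-cells $\phi: f \Rightarrow g$ in $\mathcal{A}_S$, while the right-hand side is the set of $2$-cells of pseudoalgebras between the pseudomorphisms $Kf$ and $Kg$.

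Finally I would identify the right-hand side with $2$-cells between pseudomorphisms of \emph{free} pseudoalgebras. Because $K$ factors through $\mathbf{FreePsAlg}\left(\mathcal{A}, S\right)$, both $Kf$ and $Kg$ are pseudomorphisms of free pseudoalgebras, and fullness of the inclusion $\mathbf{FreePsAlg}\left(\mathcal{A}, S\right) \hookrightarrow \mathcal{A}^S$ ensures that the $2$-cells $Kf \Rightarrow Kg$ computed in $\mathcal{A}^S$ are exactly those computed in $\mathbf{FreePsAlg}\left(\mathcal{A}, S\right)$. The bijection above, being the action on hom-sets of the fully faithful functor $K_{X, Y}$, is moreover compatible with vertical composition, so it yields the claimed equivalence of descriptions rather than a mere bijection of underlying sets.

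There is essentially no substantive obstacle here, since the content is carried entirely by Theorem \ref{comparison from Gray Kleisli is bi-fully-faithful}; the remaining points are bookkeeping. The two things requiring care are to extract fully faithfulness, hence bijectivity on $2$-cells, from the stronger statement that $K_{X, Y}$ is an equivalence, and to note that it is precisely the factorisation through $\mathbf{FreePsAlg}\left(\mathcal{A}, S\right)$ together with fullness of this sub-$2$-category that licenses phrasing the target in terms of free pseudoalgebras.
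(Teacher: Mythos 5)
Your proposal is correct and follows essentially the same route as the paper: the paper's proof also applies Theorem \ref{comparison from Gray Kleisli is bi-fully-faithful} to the Eilenberg--Moore splitting of Example \ref{comparison to free pseudoalgebras} and extracts the local fully-faithfulness of $K$ to get the bijection on $2$-cells. Your additional remarks (deducing fully faithfulness from the hom-wise equivalences, and using fullness of the inclusion $\mathbf{FreePsAlg}\left(\mathcal{A}, S\right) \hookrightarrow \mathcal{A}^S$ to phrase the target in terms of free pseudoalgebras) are just careful spellings-out of what the paper leaves implicit.
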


\begin{proof}
	Apply Theorem \ref{comparison from Gray Kleisli is bi-fully-faithful} to the free pseudoalgebras pseudoadjunction described in Example \ref{comparison to free pseudoalgebras}. The result follows from the local fully-faithfulness aspect of $K$ being bi-fully faithful.
\end{proof}

\noindent Since the left pseudoadjoint $\mathcal{A} \rightarrow \mathbf{FreePsAlg}\left(\mathcal{A}, S\right)$ is biessentially surjective on objects it follows that $\mathcal{A}_S \rightarrow \mathbf{FreePsAlg}\left(\mathcal{A}, S\right)$ is a weak biequivalence. We end this section by showing that this comparison is only an isomorphism when $\mathcal{A} = \mathbf{0}$, and noting that $\mathbf{Gray}$-Kleisli left pseudoadjoints are not closed under composition.

\begin{proposition}\label{Gray Kleilsi of identity pseudomonads}
	Let $\mathcal{A}$ be a $2$-category and $S$ the identity pseudomonad on $\mathcal{A}$. Denote by $\pi_{2}\left(\mathbf{Mnd}\right)$ the $2$-category constructed from the free-living monad $\mathbf{Mnd}$ by universally inverting the $2$-cells. Then
	\begin{enumerate}
		\item $\mathcal{A}_{S}$ is isomorphic to the Gray tensor product $\pi_{2}\left(\mathbf{Mnd}\right) \otimes \mathcal{A}$. 
		\item The biequivalence $\mathcal{A}_S \rightarrow \mathbf{FreePsAlg}\left(\mathcal{A}, S\right)$ is an isomorphism of $2$-categories if and only if $\mathcal{A} = \mathbf{0}$. 
	\end{enumerate} 
\end{proposition}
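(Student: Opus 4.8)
The plan is to prove part (1) by a $\mathbf{Gray}$-enriched Yoneda argument, assembling a chain of isomorphisms of $2$-categories that is $\mathbf{Gray}$-natural in a test object $\mathcal{B}$. Starting from the universal property recorded in Remark \ref{Gray Kleilsi's universal cocone as a weighted colimit}, together with the coherence theorem (Proposition 4.2 of \cite{Coherent Approach to Pseudomonads}), there is a $\mathbf{Gray}$-natural isomorphism
$$\mathbf{Gray}\left(\mathcal{A}_S, \mathcal{B}\right) \cong {\mathbf{Gray}\left(\mathcal{A}, \mathcal{B}\right)}^{\mathbf{Gray}\left(S, \mathcal{B}\right)}.$$
When $S = 1_{\mathcal{A}}$ the pseudomonad $\mathbf{Gray}\left(S, \mathcal{B}\right)$ acts by precomposition with $1_{\mathcal{A}}$ and so is the identity pseudomonad on $\mathcal{C} := \mathbf{Gray}\left(\mathcal{A}, \mathcal{B}\right)$; hence the right-hand side is the $2$-category of pseudoalgebras for the identity pseudomonad on $\mathcal{C}$. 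The next step is to identify this with $\mathbf{Gray}\left(\pi_2\left(\mathbf{Mnd}\right), \mathcal{C}\right)$, after which the closed structure of the Gray tensor product \cite{Gray Formal Category Theory} supplies a further $\mathbf{Gray}$-natural isomorphism $\mathbf{Gray}\left(\pi_2\left(\mathbf{Mnd}\right), \mathcal{C}\right) \cong \mathbf{Gray}\left(\pi_2\left(\mathbf{Mnd}\right) \otimes \mathcal{A}, \mathcal{B}\right)$. Composing and invoking enriched Yoneda \cite{Kelly Basic Concepts of Enriched Category Theory} then yields $\mathcal{A}_S \cong \pi_2\left(\mathbf{Mnd}\right) \otimes \mathcal{A}$.

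The crux, and the step I expect to require the most care, is the identification of pseudoalgebras for the identity pseudomonad on $\mathcal{C}$ with $2$-functors $\pi_2\left(\mathbf{Mnd}\right) \to \mathcal{C}$, simultaneously on objects, morphisms and $2$-cells. On objects, a pseudoalgebra for the identity pseudomonad is an object $G$ with a structure $1$-cell $\Phi: G \to G$ and invertible structure $2$-cells $u: 1_G \cong \Phi$ and $m: \Phi\Phi \cong \Phi$; since every coherence cell of the identity pseudomonad is an identity, the pseudoalgebra axioms collapse to the strict associativity and unit equations for $(\Phi, u, m)$, that is, to a monad on $G$ whose unit and multiplication are invertible. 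By the universal property of $\pi_2\left(\mathbf{Mnd}\right)$ as the universal inversion of the $2$-cells of the free-living monad, such monads are exactly $2$-functors $\pi_2\left(\mathbf{Mnd}\right) \to \mathcal{C}$. The same collapse of coherence data shows that pseudomorphisms of such pseudoalgebras are precisely pseudonatural transformations of the corresponding $2$-functors (the pseudomorphism constraint $\overline{H}: \Phi' H \Rightarrow H\Phi$ being the pseudonaturality constraint at the generating $1$-cell, with the two pseudomorphism axioms matching naturality with respect to $u$ and $m$), and that $2$-cells of pseudoalgebras are exactly modifications. These assignments are manifestly natural in $\mathcal{B}$.

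For part (2), write $K': \mathcal{A}_S \to \mathbf{FreePsAlg}\left(\mathcal{A}, S\right)$ for the comparison of Notation \ref{Notation free pseudoalgebras}. It is always bijective on objects (being the identity on underlying objects, $X \mapsto \left(SX, \ldots\right) = \left(X, \ldots\right)$) and bi-fully-faithful by Theorem \ref{comparison from Gray Kleisli is bi-fully-faithful}, so it is an isomorphism of $2$-categories if and only if each hom-functor is bijective on $1$-cells. If $\mathcal{A} = \mathbf{0}$ then both $2$-categories are empty and $K'$ is vacuously an isomorphism. Conversely, suppose $\mathcal{A}$ has an object $X$. In $\mathcal{A}_S$ the freely added generating $1$-cell $\varepsilon_X: X \to X$ is distinct from $1_X$, since the presentation of Remark \ref{Gray Kleisli generators and relations} imposes only an invertible $2$-cell $u_X: 1_X \cong \varepsilon_X$ between them and never an equation of $1$-cells. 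However, by Example \ref{comparison to free pseudoalgebras} we have $K'\varepsilon_X = \left(\mu_X, \alpha_X\right) = \left(1_X, 1\right)$ and $K' 1_X = \left(1_X, 1\right)$, because $\mu_X = 1_X$ and $\alpha_X$ is an identity for the identity pseudomonad. Thus $K'$ fails to be injective on the $1$-cells of $\mathcal{A}_S\left(X, X\right)$ and so is not an isomorphism of $2$-categories, which proves part (2).

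The main obstacle is the second paragraph: verifying that the collapse of coherence for the identity pseudomonad matches pseudoalgebra data and axioms with the $\pi_2\left(\mathbf{Mnd}\right)$-indexed data in all three dimensions, and that the resulting isomorphism is $\mathbf{Gray}$-natural in $\mathcal{B}$. By contrast, the remaining isomorphisms in the chain for part (1) and the object- and $1$-cell-level bookkeeping in part (2) are routine.
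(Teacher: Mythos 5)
Your proposal is correct, but it takes a genuinely different route from the paper's in both parts. For part (1) the paper argues \emph{internally}, by comparing presentations: it takes the generators-and-relations description of $\mathcal{A}_S$ from Remark \ref{Gray Kleisli generators and relations} and matches it cell-by-cell against the presentation of $\pi_{2}\left(\mathbf{Mnd}\right) \otimes \mathcal{A}$ --- the freely added $1$-cells $\varepsilon_{X}$ generate the powers $\varepsilon_{X,n}$ of the generating endomorphism (the underlying free monoid $\mathbb{N}$), the $2$-cells $u_{X}$ and $m_{X}$ become the inverted generating $2$-cells of $\mathbf{Mnd}$, and the $\varepsilon_{f}$ become the interchange generators of the Gray tensor product, with the pseudoalgebra relations matching the monoid and interchange relations. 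You instead argue \emph{externally}: you show $\pi_{2}\left(\mathbf{Mnd}\right)\otimes\mathcal{A}$ represents the same $\mathbf{Gray}$-functor as $\mathcal{A}_S$, via identifying pseudoalgebras for the identity pseudomonad on $\mathbf{Gray}\left(\mathcal{A},\mathcal{B}\right)$ with $\mathbf{Gray}\left(\pi_{2}\left(\mathbf{Mnd}\right), \mathbf{Gray}\left(\mathcal{A},\mathcal{B}\right)\right) \cong \mathbf{Gray}\left(\pi_{2}\left(\mathbf{Mnd}\right)\otimes\mathcal{A},\mathcal{B}\right)$, and conclude by enriched Yoneda. The combinatorial content is the same (your key identification rests on the underlying category of $\mathbf{Mnd}$ being free on one endomorphism and its $2$-cells being generated by $u$ and $m$), but your version trades the explicit presentation of the Gray tensor product for its closed-structure universal property, and naturality in $\mathcal{B}$ then makes the Yoneda step automatic. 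One small point to patch: the pseudoalgebra axioms give associativity and only \emph{one} unit law, so to see that a pseudoalgebra for the identity pseudomonad is precisely a monad with invertible unit and multiplication you should invoke the derivable unit coherence of Remark \ref{Remark derivable law for pseudoalgebras}. For part (2) your argument is also different and, if anything, more direct: you exhibit the failure of injectivity on $1$-cells ($K'$ sends both $\varepsilon_{X}$ and $1_{X}$ to $\left(1_{X}, \mathrm{id}\right)$, while these are distinct in $\mathcal{A}_S$ by the presentation or by part (1)), whereas the paper argues via a cancellation property of fullness --- $KF_{S}$ is full on morphisms while $F_{S}$ is not --- an argument which, as stated there, really uses faithfulness of $K$ (supplied by assuming it is an isomorphism) rather than fullness, so your route avoids a slight imprecision in the paper's phrasing.
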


\begin{proof}
	The morphisms in $\mathcal{A}_S$ are freely generated by those in $\mathcal{A}$ and a morphism $\varepsilon_{X}: X \rightarrow X$, hence morphisms $\varepsilon_{X, n}: X \rightarrow X$ subject to $\varepsilon_{X,n}\circ \varepsilon_{X, m} = \varepsilon_{X, n+m}$ for natural numbers $m, n$ with $\varepsilon_{X, 0} = 1_{X}$. This is the same as in the Gray tensor product. For $2$-cells, the generators $m_{X}:\varepsilon_{X, 1}.\varepsilon_{X, 1} =\varepsilon_{X, 2} \Rightarrow \varepsilon_{X}$ and $u_{X}: 1_{X}= \varepsilon_{X, 0} \Rightarrow \varepsilon_{X, 1}$ become the generating $2$-cells in $\mathbf{Mnd}$ and the pseudoalgebra relations in $\mathcal{A}_S$ amount to saying that $u$ and $m$ are invertible and satisfy the monoid axioms. Finally, the generators $\varepsilon_{f}:\varepsilon_{X}.f \Rightarrow f.\varepsilon_{X}$ give rise to the generators $\varepsilon_{X, 1}.f \Rightarrow f\varepsilon_{X, 1}$ in the Gray tensor product, and pseudonaturality of $\varepsilon$ says that this satisfies the relations in the Gray tensor product.
	\\
	\\
	\noindent For part (2), observe that if $\mathcal{A}$ has an object $X$ then the morphism $\varepsilon_{X}: SX \rightarrow X$ is freely added in $\mathcal{A}_{S}$, and in particular is not in the image of $F_S$ for any $SX \rightarrow X$ in $\mathcal{A}$. In particular, $F_S$ is not full on $1$-cells. Now by the cancellation property of $2$-functors which are full on morphisms, since $\mathcal{A} \rightarrow \mathbf{FreePsAlg}\left(\mathcal{A}, S\right)$ is such but $F_S: \mathcal{A} \rightarrow \mathcal{A}_{S}$ is not, $K: \mathcal{A}_{S} \rightarrow \mathbf{FreePsAlg}\left(\mathcal{A}, S\right)$ can not be full on morphisms. In particular it is not an isomorphism if $\mathcal{A}$ has any objects.
\end{proof}

\begin{remark}
	Note that if the invertibility relations on $u$ and $m$ were omitted, such as would be the case if the data $\left(F_{S}, \varepsilon, u, m\right)$ were only required to be a lax algebra for $\left(-\right)\circ S$, then the resulting $2$-category would be isomorphic to $\mathbf{Mnd} \otimes \mathcal{A}$, a similar construction but without freely inverting the $2$-cells of $\mathbf{Mnd}$.
\end{remark}

\begin{corollary}\label{Gray Kleisli not closed under composition}
	$\mathbf{Gray}$-enriched Kleisli left pseudoadjoints are not closed under composition. 
\end{corollary}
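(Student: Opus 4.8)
The plan is to produce an explicit counterexample by iterating the Kleisli construction for identity pseudomonads, where Proposition~\ref{Gray Kleilsi of identity pseudomonads}(1) gives a completely concrete description of the objects involved. Fix a nonempty $2$-category $\mathcal{A}$ (the terminal $2$-category $\mathbf{1}$ will do) and let $S = 1_\mathcal{A}$ be the identity pseudomonad. Its $\mathbf{Gray}$-enriched Kleisli left pseudoadjoint $F_1 \colon \mathcal{A} \to \mathcal{A}_S$ is a Kleisli left pseudoadjoint by definition, and $\mathcal{A}_S \cong \pi_2(\mathbf{Mnd}) \otimes \mathcal{A}$. Now repeat the construction: let $S' = 1_{\mathcal{A}_S}$ be the identity pseudomonad on $\mathcal{A}_S$, with Kleisli left pseudoadjoint $F_2 \colon \mathcal{A}_S \to (\mathcal{A}_S)_{S'}$, again Kleisli by definition, and with $(\mathcal{A}_S)_{S'} \cong \pi_2(\mathbf{Mnd}) \otimes \mathcal{A}_S \cong \pi_2(\mathbf{Mnd}) \otimes \pi_2(\mathbf{Mnd}) \otimes \mathcal{A}$. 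The goal is to show that the composite left pseudoadjoint $F_2 F_1$ is \emph{not} a Kleisli left pseudoadjoint.

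Next I would identify the pseudomonad that the composite pseudoadjunction $F_2 F_1 \dashv U_1 U_2$ induces on $\mathcal{A}$. Since each factor splits an identity pseudomonad, the explicit description of the Kleisli right pseudoadjoint in Remark~\ref{Gray Kleisli generators and relations} gives $U_2 F_2 = 1_{\mathcal{A}_S}$ and $U_1 F_1 = 1_\mathcal{A}$ on the nose, whence the underlying endo-$2$-functor $U_1 U_2 F_2 F_1$ equals $1_\mathcal{A}$; and because the units, multiplications and coherence modifications of both factors are identities, the composite induces \emph{exactly} the identity pseudomonad on $\mathcal{A}$. Consequently the $\mathbf{Gray}$-enriched Kleisli object of the induced pseudomonad is again $\mathcal{A}_S \cong \pi_2(\mathbf{Mnd}) \otimes \mathcal{A}$, and the relevant canonical comparison (Remark~\ref{comparison from gray kleisli}) is a $2$-functor $K \colon \mathcal{A}_S \to (\mathcal{A}_S)_{S'}$.

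For $F_2 F_1$ to qualify as a Kleisli left pseudoadjoint, this $K$ would have to be an isomorphism of $2$-categories, so it suffices to prove that $\mathcal{A}_S$ and $(\mathcal{A}_S)_{S'}$ are not isomorphic at all when $\mathcal{A} \neq \mathbf{0}$. Taking $\mathcal{A} = \mathbf{1}$, I would compare the monoids of endomorphism $1$-cells on the unique object: for $\pi_2(\mathbf{Mnd})$ this monoid is $(\mathbb{N}, +)$, which is commutative, whereas for $\pi_2(\mathbf{Mnd}) \otimes \pi_2(\mathbf{Mnd})$ the two families of generating $1$-cells coming from the two tensor factors compose freely (their interchange is a non-identity invertible $2$-cell, not an equality of $1$-cells), so this monoid is the free product $\mathbb{N} \ast \mathbb{N}$, which is non-commutative. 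Any isomorphism of $2$-categories restricts to an isomorphism of hom-monoids of $1$-cells, and would therefore force $(\mathbb{N},+) \cong \mathbb{N}\ast\mathbb{N}$, a contradiction. Hence $K$ is not an isomorphism, $F_2 F_1$ is not a Kleisli left pseudoadjoint, and the class fails to be closed under composition.

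I expect the main obstacle to be the second step: carefully verifying that the composite pseudoadjunction induces the identity pseudomonad on $\mathcal{A}$ \emph{strictly}, since this is exactly what guarantees that the induced Kleisli object is $\mathcal{A}_S$ rather than some larger $2$-category, and hence that the comparison has the claimed domain. It is worth remarking, for contrast, that $F_2 F_1$ is bijective on objects and so biessentially surjective on objects; thus by Theorem~\ref{comparison from Gray Kleisli is bi-fully-faithful} together with Theorem~\ref{when is comparison from gray kleisli a biequivalence} the comparison $K$ is nonetheless a \emph{weak} biequivalence. The failure of closure is therefore entirely a phenomenon of the strict enriched universal property, while the weaker `up to biequivalence' notion remains closed under composition precisely because biessential surjectivity on objects is.
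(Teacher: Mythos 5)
Your proposal is correct and follows essentially the same route as the paper: iterate the identity-pseudomonad Kleisli construction starting from $\mathbf{1}$ (using Proposition \ref{Gray Kleilsi of identity pseudomonads}), note that $\mathbf{1}$ carries only one pseudomonad, and conclude that closure under composition would force an isomorphism $\pi_{2}\left(\mathbf{Mnd}\right) \otimes \pi_{2}\left(\mathbf{Mnd}\right) \cong \pi_{2}\left(\mathbf{Mnd}\right)$, which does not exist. Your endomorphism-monoid comparison ($\left(\mathbb{N}, +\right)$ is commutative while the $1$-cells of the Gray tensor square compose freely, giving the non-commutative free monoid $\mathbb{N} \ast \mathbb{N}$) makes explicit the non-isomorphism that the paper's proof leaves implicit in its remark that the second left pseudoadjoint is not an isomorphism.
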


\begin{proof}
	Consider the $2$-functor $\mathbf{1} \rightarrow \pi_{2}\left(\mathbf{Mnd}\right)$, which is a special case of Proposition \ref{Gray Kleilsi of identity pseudomonads}. Then apply the same construction again for another $\mathbf{Gray}$-enriched Kleisli left pseudoadjoint $\pi_{2}\left(\mathbf{Mnd}\right) \rightarrow \pi_{2}\left(\mathbf{Mnd}\right) \otimes \pi_{2}\left(\mathbf{Mnd}\right)$. Since this second $2$-functor is not an isomorphism, and since there is only one pseudomonad on $\mathbf{1}$, the composite is not itself a $\mathbf{Gray}$-enriched Kleisli left pseudoadjoint.
\end{proof}

\section{Homotopy theoretic aspects}\label{Section Homotopy Theoretic Aspects}

\noindent In Section \ref{Section related notions and perspectives} we recalled the Kleisli constructions for pseudomonads which seem to be of greatest mathematical interest, namely the Kleisli bicategory and the $2$-category of free pseudoalgebras. In contrast, in Subsection \ref{Subsection the presentation of the Kleisli object of a pseudomonad} we were only able to describe $\mathbf{Gray}$-enriched Kleisli objects for pseudomonads via a presentation. As per Theorem \ref{when is comparison from gray kleisli a biequivalence}, left pseudoadjoints which only satisfy the universal property up to biequivalence have been characterised as those which are biessentially surjective on objects, a property that is closed under composition. In contrast, Corollary \ref{Gray Kleisli not closed under composition} recorded that those left pseudoadjoints whose codomains satisfy the enriched universal property are not closed under composition.
\\
\\
\noindent The discrepancies described in the previous paragraph are unfortunate both for applications and for the development of the theory of pseudomonads. Closure under composition is an essential ingredient used in \cite{FTM2} to analyse free cocompletions under Kleisli objects for monads, and to develop the theory of wreaths. Any attempt to extend this theory to the two-dimensional setting would need to address these issues. In this Section we prove some useful homotopical facts about the weight for $\mathbf{Gray}$-enriched Kleisli objects and the $\mathbf{Gray}$-enriched Kleisli left pseudoadjoint. In the forthcoming \cite{Miranda Tricategorical Limits and Colimits} we will use these properties to characterise the left pseudoadjoints in $\mathbf{Gray}$ that satisfy a tricategorical universal property as precisely those which are biessentially surjective on objects.

\subsection{Preliminaries}\label{Subsection Homotopical preliminaries}

\begin{notation}
	Let $\left(\mathcal{V}, \otimes, I\right)$ be a monoidal category with an initial object $\mathbf{0}$. For $X$ an object of $\mathcal{V}$, there is a $\mathcal{V}$-category $\mathbf{2}_\mathcal{V}[X]$ with two objects $0, 1$, and homs given by $\mathbf{2}_\mathcal{V}[X]\left(0, 1\right)=X$, $\mathbf{2}_\mathcal{V}[X]\left(0, 0\right)= \mathbf{2}_\mathcal{V}[X]\left(1, 1\right) = I$, and $\mathbf{2}_\mathcal{V}[X]\left(1, 0\right)= \mathbf{0}$. In this section, $\mathcal{V}$ will be either $\mathbf{Set}$ or $\mathbf{Cat}$ with the cartesian monoidal structure, and the subscript $\mathcal{V}$ will be omitted when it is clear from context.
\end{notation}

\begin{notation}
	In a model category \cite{Categorical Homotopy Theory}, \cite{Hovey Model Categories}
	\begin{itemize}
		\item fibrations will be denoted \begin{tikzcd}
			\bullet \arrow[r, two heads] & \bullet
		\end{tikzcd}
		\item weak equivalences will be denoted \begin{tikzcd}
			\bullet \arrow[r, "\sim"] & \bullet
		\end{tikzcd}
		\item cofibrations will be denoted \begin{tikzcd}
			\bullet \arrow[r, rightarrowtail] & \bullet
		\end{tikzcd}
	\end{itemize}
\end{notation}

\noindent The model structure of relevance is the Lack model structure on $2$-$\mathbf{Cat}$, which we now review.

\begin{example}\label{Example Lack model structure}
	In \cite{Quillen 2-cat}, a model structure is defined on the category $2$-$\mathbf{Cat}$ of $2$-categories and $2$-functors. Its weak equivalences are the $2$-functors which are biessentially surjective on objects and equivalences of categories between homs, while its fibrations are the so called \emph{equiv-fibrations}. Equiv-fibrations are those $2$-functors which are isofibrations between hom-categories, and which also lift adjoint equivalences. In this model structure, all objects are fibrant while the cofibrant objects are precisely those $2$-categories whose underlying categories are free on graphs. This model structure is combinatorial, and is monoidal with respect to the $\mathbf{Gray}$-tensor product. The generating cofibrations in this model structure are depicted below. Note that apart from $i_{1}$, the generating cofibrations are all of the form $\mathbf{2}[f]$ for some $f$ a generating cofibration in the canonical model structure on $\mathbf{Cat}$.

$$\begin{tikzcd}
	\{ \text{ } \}
	\arrow[dd,"i_1"',shorten=4.5,red]
	\\
	\\
	\bullet
	& {}
\end{tikzcd} \begin{tikzcd}[column sep=3]
	\bullet
	&
	{}
	\arrow[dd,"i_2",shorten=5,red]
	&
	\bullet
	\\
	\\
	\bullet
	\arrow[rr]
	&
	{}
	&
	\bullet
	& {}
	& {}
\end{tikzcd} \begin{tikzcd}
	\bullet
	\arrow[rr,bend left=20]
	\arrow[rr,bend right=20]
	&
	{}
	\arrow[dd,"i_3",shorten=10,red]
	&
	\bullet
	\\
	\\
	\bullet
	\arrow[rr,bend left=20,""{name=A}]
	\arrow[rr,bend right=20,""{name=B}]
	&
	{}
	&
	\bullet
	\arrow[from=A,to=B,Rightarrow,shorten=2]
	&
	{}
\end{tikzcd} \begin{tikzcd}
	\bullet
	\arrow[rr,bend left=20,""{name=C}]
	\arrow[rr,bend right=20,""{name=D}]
	&
	{}
	\arrow[dd,"i_4",shorten=10,red]
	&
	\bullet
	\\
	\\
	\bullet
	\arrow[rr,bend left=20,""{name=A}]
	\arrow[rr,bend right=20,""{name=B}]
	&
	{}
	&
	\bullet
	\arrow[from=A,to=B,Rightarrow,shorten=2]
	\arrow[from=C,to=D,Rightarrow,shift left=3,shorten=2]
	\arrow[from=C,to=D,Rightarrow,shift right=3,shorten=2]
\end{tikzcd}$$

\end{example}

\noindent The following proposition follows immediately from the generators and relations description of $\mathbf{Gray}$-enriched Kleisli objects $\mathcal{A}_S$ given in Remark \ref{Gray Kleisli generators and relations}. 

\begin{proposition}\label{gray kleisli left adjoint a cofibration}
	The left pseudoadjoint $F: \mathcal{A} \rightarrowtail \mathcal{A}_S$ is a cofibration in the Lack model structure on $2$-$\mathbf{Cat}$.
\end{proposition}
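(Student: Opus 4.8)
The plan is to present $F$ as a relative cell complex built from the generating cofibrations $i_1,\dots,i_4$ of Example \ref{Example Lack model structure}. Since the Lack model structure is cofibrantly generated, its cofibrations are exactly the retracts of transfinite composites of pushouts of $\{i_1,i_2,i_3,i_4\}$; in particular the class of cofibrations is closed under pushout and transfinite composition, so it suffices to realise $F:\mathcal{A}\to\mathcal{A}_S$ as such a composite, reading the required cell attachments directly off the presentation of Remark \ref{Gray Kleisli generators and relations}. Because $F$ is bijective on objects, no instance of $i_1$ is needed.

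Next I would account for the adjoined $1$-cells and $2$-cells. Each fresh arrow $\varepsilon_X\colon SX\to X$ is obtained by a pushout of $i_2$ along the $2$-functor classifying the ordered pair of objects $(SX,X)$ already present, so adjoining all of them is a transfinite composite (equivalently a single pushout of a coproduct) of copies of $i_2$. Each generating $2$-cell $\varepsilon_f$, $u_X$, $m_X$ lies between a parallel pair of $1$-cells present at the relevant stage, and is therefore adjoined by a pushout of $i_3$ along the map classifying that parallel pair.

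The one point that needs care is the requirement that $u$, $m$ and the components $\varepsilon_f$ be \emph{invertible}, since freely adjoining an invertible $2$-cell is not itself a single generating cofibration. I would handle this in the standard way: adjoin the prospective inverse $2$-cell by a further pushout of $i_3$, and then force the two invertibility equations by pushouts of $i_4$, which identifies a chosen parallel pair of $2$-cells — here each pasting composite with the appropriate identity $2$-cell. As Remark \ref{Gray Kleisli generators and relations} already emphasises, the remaining relations — pseudonaturality of $\varepsilon$, the modification conditions on $u$ and $m$, and the left-unit and associativity coherences — are all equations between parallel pairs of $2$-cells, so each is likewise imposed by a pushout of $i_4$.

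Finally I would organise these attachments into a single transfinite composite (objects, then $1$-cells, then $2$-cells, then relations), concluding that $F$ is a relative $\{i_1,\dots,i_4\}$-cell complex and hence a cofibration. The only genuine obstacle is the invertibility bookkeeping just described; everything else is a direct transcription of the generators-and-relations data, which is why the result is immediate from Remark \ref{Gray Kleisli generators and relations}.
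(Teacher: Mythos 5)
Your proof is correct, but it takes a genuinely different route from the paper's. The paper's proof is a two-line appeal to Lack's characterisation of cofibrations (Lemma 4.1 and Corollary 4.1.2 of \cite{Quillen 2-cat}): a $2$-functor is a cofibration in the Lack model structure precisely when, at the level of underlying categories, it is injective on objects and freely adjoins new generating arrows. Since the $1$-cells of $\mathcal{A}_S$ are obtained from those of $\mathcal{A}$ by freely adjoining the arrows $\varepsilon_{X}$, the conclusion is immediate --- crucially, the $2$-cell data ($\varepsilon_f$, $u_X$, $m_X$, their invertibility, and all the relations) is \emph{irrelevant} to cofibrancy, so none of the bookkeeping you perform is needed. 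Your argument instead exhibits $F$ as an explicit relative $\{i_1,\dots,i_4\}$-cell complex: pushouts of $i_2$ for the $\varepsilon_X$, pushouts of $i_3$ for the generating $2$-cells and their formal inverses, and pushouts of $i_4$ for the invertibility equations and the remaining relations (all of which are, as Remark \ref{Gray Kleisli generators and relations} notes, equations between parallel pairs of $2$-cells, so this works). What your approach buys: it is self-contained modulo standard facts about cofibrantly generated model categories, it does not require Lack's structural characterisation, and it establishes the stronger conclusion that $F$ is \emph{cellular} rather than merely a cofibration (i.e.\ no retracts are needed), in the same spirit as the constructions of Proposition \ref{Pseudoalgebras via cofibrant weighted limits} and Corollary \ref{TriKL weight is cofibrant}. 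What it costs is length and care: your invertibility device (adjoin a candidate inverse by $i_3$, then impose the two composition equations by $i_4$) is exactly the content of the paper's Lemma \ref{iso 3 cell inserter from 3 cell inserter and 3 cell equifier}, so you could shorten your argument by taking pushouts of the cofibration $k$ from that lemma instead of redoing the construction by hand.
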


\begin{proof}
	As described in Remark \ref{Gray Kleisli generators and relations}, the $1$-cells of $\mathcal{A}_S$ are constructed by freely adding certain $1$-cells to $\mathcal{A}$ via $F$. Thus $F$ is a cofibration by Lemma 4.1 and Corollary 4.1.2 of \cite{Quillen 2-cat}.
\end{proof}

\begin{definition}\label{definition projective cofibrations}
	Let $\mathcal{V}$ be a combinatorial monoidal model category with set of generating cofibrations $\mathcal{G}$ and let $\mathcal{C}$ be enriched in $\mathcal{V}$. Consider the projective model structure on the $\mathcal{V}$-enriched functor category $[\mathcal{C}^\text{op}, \mathcal{V}]$, whose fibrations and weak equivalences are defined pointwise and whose set of generating cofibrations in $[\mathcal{C}^\text{op}, \mathcal{V}]$ is given as depicted below, where $\cdot$ denotes copower.
	
	$$\{f\cdot \mathcal{C}\left(-{,} X\right): A \cdot \mathcal{C}\left(-{,}X\right) \rightarrow  B \cdot \mathcal{C}\left(-{,}X\right) |\left(f: A \rightarrow B\right) \in \mathcal{G}{,} X \in \mathcal{C}\}$$
	
	\noindent A projective cofibration in $[\mathcal{C}^\text{op}, \mathcal{V}]$ is called
	\begin{itemize}
		\item a \emph{connected cellular} if it can be built out of generating cofibrations using just pushouts of spans $X \leftarrow Y \rightarrow Z$ where $Y \neq \mathbf{0}$, and countable composites.
		\item a \emph{cellular} if it can be built out of generating cofibrations using just arbitrary pushouts, and countable composites.
	\end{itemize} 
	\noindent A projective cofibrant object $W \in [\mathcal{C}^\text{op}, \mathcal{V}] $ will be called cellular (resp. connected cellular) if the projective cofibration $\mathbf{0} \rightarrowtail W$ is cellular (resp. projective cellular).
\end{definition}

\begin{remark}\label{general projective cofibrants, and relation to flexibility in Cat}
	Retracts are also needed to build a general projective cofibration. Projective cofibrant weights on finite dimensional categories are homotopically well-behaved in that when taking limits equations only need to be imposed on cells of the highest dimension, while when taking colimits one only needs to quotient by equivalence relations on cells of the highest dimension. We review these notions when $\mathcal{V}= \mathbf{Cat}$. 
	\begin{itemize}
		\item The connected cellular projective cofibrant objects are the saturation of the weights for (co)inserters and (co)equifiers.
		\item The cellular projective cofibrant objects are the saturation of the weights for (co)products, (co)inserters and (co)equifiers. These are often referred to by the acronym \emph{PIE}. They are coalgebras for the cofibrant replacement comonad, which on $[\mathcal{C}^\text{op}, \mathbf{Cat}]$ is equivalently described as the $2$-comonad whose left $2$-adjoint is the pseudonatural transformation classifier $L: [\mathcal{C}^\text{op}, \mathbf{Cat}] \rightarrow \mathbf{GRAY}\left(\mathcal{C}^\text{op}, \mathbf{Cat}\right)$.
		\item The projective cofibrant objects are the saturation of the weights for (co)products, (co)inserters, (co)equifiers, and pseudo splittings of pseudo-idempotents. These are often called \emph{flexible}. They are equivalently those $W$ for which the counit $\varepsilon_{W}: LW \rightarrow W$ $2$-natural transformation has a section that is also strict, as opposed to merely pseudonatural.
	\end{itemize}
	
	\noindent See \cite{Categorical Homotopy Theory}, \cite{Shulman Enriched Homotopy Limits and Colimits} and \cite{Moser Injective and Projective Mondel Structures on Enriched Diagram Categories} for the projective model structure. See \cite{Power Robinson PIE} for PIE weights, \cite{Bird Kelly Power Street} for flexible weights, \cite{Two Dimensional Monad Theory} and \cite{Lack Homotopy Theoretic Aspects of 2-monads} for a $2$-monadic perspective on flexibility and its relation to cofibrancy, and \cite{Albert Kelly closure of a class of colimits} for saturations of classes of weights.
\end{remark}

\subsection{Projective cofibrancy of the weight for Kleisli objects}\label{Subsection Kleilsi weight is projective cofibrant}
\noindent The main result of this section is Corollary \ref{TriKL weight is cofibrant}, in which we construct the weight for Kleisli objects showing that it is projective cellular cofibrant. This construction is dual to what will be described in the proof of Proposition \ref{Pseudoalgebras via cofibrant weighted limits}, which will build the Eilenberg-Moore object of a pseudomonad using $2$-cell inserters, iso-$3$-cell inserters, and $3$-cell equifiers.
\\
\\
\noindent Recall that in $2$-category theory, the weight for iso-inserters is projective cofibrant since iso-inserters can be constructed using inserters and equifiers. In a similar way, iso-$3$-cell inserters in a $\mathbf{Gray}$-category can be constructed using $3$-cell inserters and $3$-cell equifiers. To see this we need Lemma \ref{iso 3 cell inserter from 3 cell inserter and 3 cell equifier}, whose proof resembles the proof of the analogous result about iso-inserters in dimension two. The proof is given in Appendix \ref{Proof of Lemma 3-cell inserter}.

\begin{lemma}\label{iso 3 cell inserter from 3 cell inserter and 3 cell equifier}
	The $2$-functor $k: \mathbf{2}[\mathbf{1}+\mathbf{1}] \rightarrow \mathbf{2}[\mathbb{I}]$ depicted below can be built as a finite composite of pushouts of copowers of the generating cofibrations in the Lack model structure recalled in Example \ref{Example Lack model structure}. In particular, it is itself a cofibration.
	
	$$\begin{tikzcd}
		X\arrow[rr, bend left = 30, "f"name=A]\arrow[rr, bend right = 30, "g"'name=B] &{}& Y
		&{}\arrow[rrr, mapsto]&&&{}
		&X\arrow[rr, bend left = 30, "f"name=C]\arrow[rr, bend right = 30, "g"'name=D] &\cong_{\alpha}& Y
	\end{tikzcd}$$
	
\end{lemma}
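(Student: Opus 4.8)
The statement asserts that the map $k$ freely adjoining an invertible $3$-cell (in the one-object/two-arrow setting encoded by $\mathbf{2}[-]$) can be factored as a finite composite of pushouts of copowers of the generating cofibrations $i_1,\dots,i_4$. My strategy is to mimic the classical two-dimensional argument that shows the inclusion of the free-walking $2$-cell into the free-walking invertible $2$-cell is a cofibration, built from inserters and equifiers. Concretely, I would first adjoin a single (not-necessarily-invertible) $3$-cell $\alpha$, then adjoin a second $3$-cell $\beta$ going the other way, and finally impose the two equations $\beta\alpha = 1$ and $\alpha\beta = 1$ forcing $\alpha$ to be invertible with chosen inverse $\beta$. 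Each of these three moves should be realised as a pushout of a copower of one of the $i_j$.

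\emph{First steps.} Starting from $\mathbf{2}[\mathbf{1}+\mathbf{1}]$, whose hom from $X$ to $Y$ is the discrete category on two objects $f,g$ (the parallel pair with no connecting cell), I would adjoin the $3$-cell $\alpha\colon f\Rightarrow g$ by pushing out $i_3$ along the $2$-functor selecting the parallel pair $(f,g)$ in the relevant hom; this yields $\mathbf{2}[\mathbf{2}]$, where the hom is the free-living arrow. I would then adjoin a reverse $3$-cell $\beta\colon g\Rightarrow f$ by another pushout of $i_3$, selecting the (now single) pair $(g,f)$, producing the hom-category that is free on two mutually opposite arrows. The key observation making these pushouts well-defined is that $i_3$ is exactly the generator that freely adjoins a $2$-cell to a $2$-category hom, and $\mathbf{2}[i_3]$-type maps at the level of homs add a $2$-cell to the hom-category — so at the enriched level these are the copowers $i_3 \cdot \mathbf{2}[-]$ appearing in the generating cofibrations.

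\emph{Imposing invertibility.} The final and most delicate part is forcing $\alpha$ and $\beta$ to be mutually inverse. Here I would push out $i_4$ twice. Recall $i_4$ identifies a parallel pair of $3$-cells, so pushing out (a copower of) $i_4$ along the $2$-functor selecting the pair $(\beta\alpha,\, 1_f)$ imposes the equation $\beta\alpha = 1_f$, and a second pushout of $i_4$ along $(\alpha\beta,\,1_g)$ imposes $\alpha\beta = 1_g$. After these two equifier-style pushouts the hom from $X$ to $Y$ becomes the free-living \emph{invertible} arrow $\mathbb{I}$, exactly the target $\mathbf{2}[\mathbb{I}]$, and the composite of the four pushouts is $k$.

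\emph{The main obstacle.} The crux — and where I would spend the real work — is verifying that the two $i_4$-pushouts genuinely produce an invertible $3$-cell rather than merely a pair of $3$-cells satisfying the two equations in some quotient that could behave pathologically. One must check that the free $\mathbf{Gray}$-category (equivalently $2$-category, since here only a single hom is nontrivial) generated by $\alpha,\beta$ modulo $\beta\alpha=1,\ \alpha\beta=1$ has hom-category precisely $\mathbb{I}$, with no extra identifications or surviving endomorphisms; this is the enriched analogue of the standard fact that the walking isomorphism is obtained from the walking pair of opposite arrows by coequalizing the two round-trips against identities. Because the interaction in the $\mathbf{Gray}$-tensor setting happens entirely within a single hom-category and the $i_4$ generators act by the copowers $i_4\cdot\mathbf{2}[-]$, this reduces to the corresponding $\mathbf{Cat}$-level computation, which is the content of the analogous iso-inserter argument in dimension two and is carried out in detail in Appendix \ref{Proof of Lemma 3-cell inserter}.
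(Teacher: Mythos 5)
Your proposal is correct and takes essentially the same route as the paper: the diagram in Appendix \ref{Proof of Lemma 3-cell inserter} exhibits $k$ as the composite of exactly your four pushouts, namely two pushouts of $i_3$ adjoining first $\alpha$ and then a reverse cell, followed by two pushouts of $i_4$ imposing the equations $\alpha^{-1}\alpha = 1_f$ and $\alpha\alpha^{-1} = 1_g$. The only cosmetic differences are that the paper names the freely adjoined reverse cell $\alpha^{-1}$ rather than $\beta$, and that it, too, leaves implicit the final $\mathbf{Cat}$-level check that the resulting hom-category is precisely $\mathbb{I}$.
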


\noindent We now review the limits out of which Eilenberg-Moore objects of pseudomonads, described explicitly in Subsection \ref{Subsection free pseudoalgebras}, can be built. Building them in this way exhibits their forgetful $2$-functors as equiv-fibrations.

\begin{definition}\label{cofibrantly weighted limits in Gray}	
	The $\mathbf{Gray}$-enriched power of $\mathcal{B}$ by $\mathcal{P}$ will be denoted by $\mathcal{B}^\mathcal{P}$, as shorthand for $\mathbf{Gray}\left(\mathcal{P}, \mathcal{B}\right)$. We define various notions of inserters and equifiers as pullbacks of powers in $\mathbf{Gray}$.
	\begin{enumerate}
		\item Let $F, G: \mathcal{A} \rightarrow \mathcal{B}$ be a parallel pair of $2$-functors. Then the \emph{$2$-cell inserter} of $F$ and $G$ is the pullback depicted below left, where the isomorphism $\mathcal{B}\times \mathcal{B} \cong \mathcal{B}^{\mathbf{1}+\mathbf{1}}$ is omitted.
		\item Let $F, G$ be as above and let $\alpha, \beta: F \Rightarrow G$ be a parallel pair of pseudonatural transformations. Then the \emph{$3$-cell inserter} of $\alpha$ and $\beta$ is the pullback depicted below right.
		$$\begin{tikzcd}[font=\fontsize{9}{6}]
			\mathbf{Ins}_{2}\left(F{,}G\right)
			\arrow[rr]
			\arrow[dd, two heads] && \mathcal{B}^\mathbf{2}\arrow[dd, "\mathcal{B}^{i_{2}}", two heads]
			\\
			\\
			\mathcal{A} \arrow[rr, "\left(F{,}G\right)"']&& \mathcal{B}\times \mathcal{B}&{}
		\end{tikzcd}\begin{tikzcd}[font=\fontsize{9}{6}]
			\mathbf{Ins}_{3}\left(\alpha{,}\beta\right)
			\arrow[rr]
			\arrow[dd, two heads] && {
				{\mathcal{B}}^{\mathbf{2}_\mathbf{Cat}{[}\mathbf{2}{]}}\arrow[dd, "{\mathcal{B}}^{i_{3}}", two heads]}
			\\
			\\
			\mathcal{A}\arrow[rr, "\left(\alpha{,}\beta\right)"']&& {
				\mathcal{B}}^{\mathbf{2}_{\mathbf{Cat}}{[}\mathbf{1}+\mathbf{1}{]}}
		\end{tikzcd}$$
		\item Let $F, G: \mathcal{A} \rightarrow \mathcal{B}, \alpha, \beta: F \Rightarrow G$ be as above. Then the \emph{iso-$3$-cell inserter} of $\alpha$ and $\beta$ is the pullback depicted below left, where $k$ is the $2$-functor of Lemma \ref{iso 3 cell inserter from 3 cell inserter and 3 cell equifier}.
		\item Let $\alpha$ and $\beta$ be as above and let $\Gamma, \Pi: \alpha \Rrightarrow \beta$ be a parallel pair of modifications. Then the \emph{$3$-cell equifier} of $\Gamma$ and $\Pi$ is the pullback depicted below right.

		$$\begin{tikzcd}[font=\fontsize{9}{6}]
			\mathbf{IsoIns}_{3}\left(\alpha{,}\beta\right)
			\arrow[rr]
			\arrow[dd, two heads]
			&& {{\mathcal{B}}^{\mathbf{2}_\mathbf{Cat}{[}\mathbb{I}{]}}\arrow[dd, "{\mathcal{B}}^{k}", two heads]}
			\\
			\\
			\mathcal{A}\arrow[rr, "\left(\alpha{,}\beta\right)"']
			&& {\mathcal{B}}^{\mathbf{2}_{\mathbf{Cat}}{[}\mathbf{1}+\mathbf{1}{]}}&{}
		\end{tikzcd}\begin{tikzcd}[font=\fontsize{9}{6}]
			\mathbf{Equif}_{3}\left(\Gamma{,}\Pi\right)
			\arrow[rr]
			\arrow[dd, two heads] && {
				{\mathcal{B}}^{\mathbf{2}_\mathbf{Cat}{[}\mathbf{2}_{\mathbf{Set}}{[}\mathbf{1}{]}{]}}\arrow[dd, "{\mathcal{B}}^{i_{4}}", two heads]}
			\\
			\\
			\mathcal{A}\arrow[rr, "\left(\Gamma{,}\Pi\right)"']&& {
				\mathcal{B}}^{\mathbf{2}_{\mathbf{Cat}}{[}\mathbf{2}_\mathbf{Set}{[}\mathbf{1}+\mathbf{1}{]}{]}}
		\end{tikzcd}$$
	\end{enumerate}
\end{definition}

\begin{remark}
	For each of the limits in Definition \ref{cofibrantly weighted limits in Gray}, its name is justified by the cone determined via the horizontally depicted projection. Note that an (resp. iso) $n$-cell inserter universally inserts an $n$-cell in $\mathbf{Gray}$. Thus when $n=2$ this is a pseudonatural transformation, while when $n=3$ this is a (resp. an invertible) modification. 
\end{remark}

\begin{proposition}\label{Pseudoalgebras via cofibrant weighted limits}
	The pseudomonadic $2$-functor $U^S: \mathcal{A}^S \rightarrow \mathcal{A}$ can be built in $\mathbf{Gray}$ as a finite composite pullbacks of powers of the generating cofibrations in the Lack model structure recalled in Example \ref{Example Lack model structure}. In particular, it is an equiv-fibration.
\end{proposition}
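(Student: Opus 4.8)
The plan is to realise $U^S$ as a finite composite of the pullbacks introduced in Definition \ref{cofibrantly weighted limits in Gray}, building up the pseudoalgebra data of Proposition \ref{EM universal property} layer by layer over $\mathcal{A}$. First I would form the $2$-cell inserter $\mathbf{Ins}_{2}\left(S, 1_\mathcal{A}\right)$ of the parallel pair $S, 1_\mathcal{A}: \mathcal{A} \rightrightarrows \mathcal{A}$; its projection $p$ to $\mathcal{A}$ carries a universal pseudonatural transformation $\phi: Sp \Rightarrow p$, which is the structure map of the algebra. Over this $2$-category I would form two successive iso-$3$-cell inserters: the first inserting an invertible modification $u$ between the parallel pseudonatural transformations $1_{p}$ and $\phi.\left(\eta p\right)$, and the second inserting an invertible modification $m$ between $\phi.\left(S\phi\right)$ and $\phi.\left(\mu p\right)$. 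Finally I would impose the two pseudoalgebra axioms from the definition in Subsection \ref{Subsection free pseudoalgebras} as a pair of $3$-cell equifiers, one for the associativity coherence and one for the unit coherence. By the coherence theorem for pseudoalgebras (Proposition 4.2 of \cite{Coherent Approach to Pseudomonads}), these finitely many data and axioms are exactly what a pseudoalgebra requires, so the resulting limit has the universal property of $\mathcal{A}^S$ and its projection to $\mathcal{A}$ is $U^S$.

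Each of these four constructions is, directly by Definition \ref{cofibrantly weighted limits in Gray}, a pullback of a power $\mathcal{A}^{j}$ of a generating cofibration $j$: the $2$-cell inserter uses $j = i_{2}$, the $3$-cell equifiers use $j = i_{4}$, and the iso-$3$-cell inserters use $j = k$. The map $k$ is not itself a generating cofibration, but Lemma \ref{iso 3 cell inserter from 3 cell inserter and 3 cell equifier} exhibits it as a finite composite of pushouts of copowers of the generating cofibrations. Since the power $2$-functor $\mathbf{Gray}\left(-, \mathcal{A}\right)$ is contravariant and sends colimits to limits, it converts this cellular presentation of $k$ into a presentation of $\mathcal{A}^{k}$ as a finite composite of pullbacks of powers of the generating cofibrations. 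Splicing these presentations together shows that each step, and hence $U^S$ as a whole, is a finite composite of pullbacks of powers of generating cofibrations, which is the first assertion.

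It then remains to read off that $U^S$ is an equiv-fibration. Each generating cofibration $i_{j}$ is a cofibration, and since every $2$-category is fibrant and the Lack model structure is monoidal with respect to the $\mathbf{Gray}$-tensor product (Example \ref{Example Lack model structure}), powering the fibrant object $\mathcal{A}$ by $i_{j}$ yields a fibration $\mathcal{A}^{i_{j}}$, that is, an equiv-fibration. Fibrations are stable under pullback and closed under composition, so the composite $U^S$ is an equiv-fibration. I expect the only genuinely delicate point to be the identification of universal properties in the first paragraph: one must check that the pseudonatural transformations and modifications produced at each stage whisker together into precisely the composites appearing in the pseudoalgebra axioms, and that the two equifiers impose exactly those axioms and no others. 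This bookkeeping is where the coherence theorem of \cite{Coherent Approach to Pseudomonads} does the essential work, guaranteeing both that no higher or additional coherence cells intervene and that the construction terminates after finitely many steps.
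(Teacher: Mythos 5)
Your proposal is correct and follows essentially the same route as the paper: a $2$-cell inserter for $\phi$, two successive iso-$3$-cell inserters for $u$ and $m$, and two $3$-cell equifiers imposing the unit and associativity axioms, with Lemma \ref{iso 3 cell inserter from 3 cell inserter and 3 cell equifier} handling the non-generating map $k$. Your final two paragraphs (transporting the cellular presentation of $k$ through the power functor, and deducing the equiv-fibration property from fibrancy of all objects and the monoidal model structure) merely make explicit bookkeeping that the paper leaves implicit.
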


\begin{proof}
	We first construct the $2$-cell inserter $\mathbf{Ins}_{1}\left(S, 1_{\mathcal{A}}\right)$, whose universal cone is depicted below. This $2$-category has \begin{itemize}
		\item objects given by $\left(X, \phi_{X}\right)$ with $\phi: SX \rightarrow X$ a morphism in $\mathcal{A}$,
		\item arrows given by $\left(f, \bar{f}\right): \left(X, \phi_{X}\right) \rightarrow \left(Y, \phi_{Y}\right)$ where $\bar{f}: \phi_{Y}.Sf \Rightarrow f.\phi_{X}$ is invertible,
		\item $2$-cells $\beta: \left(f, \bar{f}\right) \Rightarrow \left(g, \bar{g}\right)$ given by $2$-cells $\beta: f \Rightarrow g$ in $\mathcal{A}$ satisfying the usual coherence for a $2$-cell of pseudoalgebras in $\mathcal{A}^S$.
	\end{itemize}
	
	$$\begin{tikzcd}[font=\fontsize{9}{6}]
		&&
		\mathcal{A}
		\arrow[dddd,Rightarrow,shorten=30,"\phi"]
		\arrow[rrdd,"S"]
		\\
		\\
		{\mathbf{Ins}_1\left(S{,}1_{\mathcal{A}}\right)}
		\arrow[rruu,"p_1"]
		\arrow[rrdd,"p_1"']
		&&&&
		\mathcal{A}&{}
		\\
		\\
		&&
		\mathcal{A}
		\arrow[rruu,"1_\mathcal{A}"']
	\end{tikzcd}$$
	
	\noindent Next, we will construct two successive iso-$3$-cell inserters from $\mathbf{Ins}_{1}\left(S, 1_{\mathcal{A}}\right)$. The first of these will have universal cone consisting of a projection $p_{2}: \mathbf{Ins}_{2}\left(\eta p_{1}.\phi , 1_{p_{1}}\right) \rightarrow \mathbf{Ins}_{1}\left(s, 1_{\mathcal{A}}\right)$, and an invertible modification $u$ as depicted below.
	
	$$\begin{tikzcd}[row sep = 15, column sep = 15, font=\fontsize{9}{6}]
		&&
		\mathcal{A}
		\arrow[dddd,Rightarrow,shorten=30,"\phi.p_{2}"]
		\arrow[rrdd,"S"'{name=B}]
		\arrow[rrdd,bend left=70,"1_\mathcal{A}"{name=A}]
		\\
		\\
		\mathbf{Ins}_{2}\left(\eta p_{1}.\phi{,}1_{p_{1}}\right)
		\arrow[rruu,"p_1p_{2}"]
		\arrow[rrdd,"p_1p_{2}"']
		&&&&
		\mathcal{A}
		\arrow[rr,Rightarrow,"u"]
		\arrow[rr, no head]
		&&
		{\mathbf{Ins}_2}\left(\eta p_{1}.\phi{,}1_{p_{1}}\right)
		\arrow[rrrr,bend left,"p_1p_{2}"{name=C}]
		\arrow[rrrr,bend right,"p_1p_{2}"'{name=D}]
		&&&&
		\mathcal{A} &{}
		\\
		\\
		&&
		\mathcal{A}
		\arrow[rruu,"1_\mathcal{A}"']
		\arrow[from=A,to=B,Rightarrow,shorten=8,"\eta"']
		\arrow[from=C,to=D,Rightarrow,shorten=5,"1_{p_1p_{2}}"]
	\end{tikzcd}$$
	
	\noindent The $2$-category $\mathbf{IsoIns}_{2}\left(\eta p_{1}.\phi{,}1_{p_{1}}\right)$ has \begin{itemize}
		\item objects $\left(X, \phi_{X}, u_{X}\right)$ where the pair $\left(X, \phi_{X}\right)$ is an object of $\mathbf{Ins}_{1}$ and $u_{X}: \phi_{X}.\eta_{X} \Rightarrow 1_{X}$ is an invertible $2$-cell in $\mathcal{A}$,
		\item arrows $\left(f, \bar{f}\right): \left(X, \phi_{X}, u_{X}\right)\rightarrow \left(Y, \phi_{Y}, u_{Y}\right)$ are $1$-cells $\left(X, \phi_{X}\right) \rightarrow \left(Y, \phi_{Y}\right)$ in $\mathbf{Ins}_{1}$ which moreover satisfy the unit coherence for $1$-cells in $\mathcal{A}^S$,
		\item $2$-cells just as in $\mathbf{Ins}_{1}$.
	\end{itemize}
	Next, form the iso-$3$-cell inserter $\mathbf{Ins}_{3}\left(\mu p_{1}p_{2}.\phi p_{2}, S\phi p_{2}.\phi p_{2}\right)$, whose universal invertible modification is the $m$ depicted below. Observe that the $2$-category $\mathbf{Ins}_{3}$ has 
	\begin{itemize}
		\item objects $\left(X, \phi_{X}, u_{X}, m_{X}\right)$ where the triple $\left(X, \phi_{X}, u_{X}\right)$ is an object of $\mathbf{Ins}_{2}$ and $m_{X}: \phi_{X}.S\phi_{X} \Rightarrow \phi_{X}.\mu_{X}$ is an invertible $2$-cell in $\mathcal{A}$,
		\item arrows $\left(f, \bar{f}\right): \left(X, \phi_{X}, u_{X}, m_{X}\right) \rightarrow \left(Y, \phi_{Y}, u_{Y}, m_{Y}\right)$ are $1$-cells $\left(X, \phi_{X}, u_{X}\right) \rightarrow \left(Y, \phi_{Y}, u_{Y}\right)$ in $\mathbf{Ins}_{2}$ which moreover satisfy the associativity coherence for $1$-cells in $\mathcal{A}^S$,
		\item $2$-cells just as in $\mathbf{Ins}_{2}$, hence also just as in $\mathbf{Ins}_{1}$. 
	\end{itemize}
	$$\begin{tikzcd}[column sep=15, font=\fontsize{9}{6}]
		&&&&
		\mathcal{A}
		\arrow[dddd,Rightarrow,shorten=30,"\phi"]
		\arrow[rrdd,"S"'{name=A}]
		\arrow[rr,"S"]
		&&
		\mathcal{A}
		\arrow[dd,"S"]
		\arrow[to=A,Rightarrow,shorten=8,"\mu"]
		&&&&&&
		\mathcal{A}
		\arrow[rr,"S"]
		\arrow[dd,Rightarrow,shorten=12,"\phi"]
		&&
		\mathcal{A}
		\arrow[dddd,"S"]
		\\
		\\
		{\mathbf{Ins}_2}
		\arrow[rr,"p_2"]
		&&
		{\mathbf{Ins}_1}\left(S{,}{1_\mathcal{A}}\right)
		\arrow[rruu,"p_1"]
		\arrow[rrdd,"p_1"']
		&&&&
		\mathcal{A}
		&&
		{\mathbf{Ins}}_2
		\arrow[ll,Rightarrow,"m"']
		\arrow[ll, no head]
		\arrow[rr,"p_2"]
		&&
		{\mathbf{Ins}_1}\left(S{,}{1_\mathcal{A}}\right)
		\arrow[rruu,"p_1"]
		\arrow[rr,"p_1"]
		\arrow[rrdd,"p_1"']
		&&
		\mathcal{A}
		\arrow[rruu,"1_\mathcal{A}"']
		\arrow[dd,Rightarrow,shorten=12,"\phi"]
		\\
		\\
		&&&&
		\mathcal{A}
		\arrow[rruu,"1_\mathcal{A}"']
		&&&&&&&&
		\mathcal{A}
		\arrow[rr,"1_\mathcal{A}"']
		&&
		\mathcal{A}
	\end{tikzcd}$$
	
	\noindent Finally, we need to form $3$-cell equifiers to specify each of the two non-derivable axioms for a pseudoalgebra. For the right unit axiom, we form the $3$-cell equifier of the identity with the $3$-cell depicted below. This results in a $2$-category $\mathbf{Equif}_{1}$ which is just like $\mathbf{Ins}_{3}$ except that the data $\left(X, \phi_{X}, u_{X}, m_{X}\right)$ now also satisfies the right unit axiom for a pseudoalgebra.
	
	$$\begin{tikzcd}
		\phi.1_\mathcal{A} \arrow[r, "1_{\phi}.Su"] & \phi.S\phi.S\eta \arrow[r, "m.1_{S\eta}"] & \phi.\mu.S\eta \arrow[r, "1_{\phi}.\rho"] & \phi.1_\mathcal{A}
	\end{tikzcd}$$
	
	\noindent For the associativity axiom, form the equifier of the following parallel pair of $3$-cells in $\mathbf{Gray}$. Observe that the resulting $2$-category will be isomorphic to $\mathcal{A}^S$. Note that we have omitted projections for clarity.
	
	$$\begin{tikzcd}[font=\fontsize{9}{6}]
		&\phi.\mu.S^{2}\phi
		\arrow[r, "m.1_{S\mu}"]
		&\phi.S\phi.\mu_{S}
		\arrow[rd, "1_{\phi}.\alpha"]
		\\
		\phi.S\phi.S^{2}\phi
		\arrow[rd, "m.1_{S^{2}\phi}"']
		\arrow[ru, "1_{\phi}.Sm"]
		&&&\phi.\mu.\mu_{S}
		\\
		&\phi.S\phi.S\mu
		\arrow[r, "1_{\phi}.\mu_{\phi}"']
		&\phi.\mu.S\mu
		\arrow[ru, "m.1_{\mu_{S}}"']
	\end{tikzcd}$$
\end{proof}

\begin{corollary}\label{TriKL weight is cofibrant}
	The weight $W$ for $\mathbf{Gray}$-enriched Kleisli objects is a connected, cellular cofibrant object in the projective model structure on $[\mathbf{Psmnd}^\text{op}, \mathbf{Gray}]$.
\end{corollary}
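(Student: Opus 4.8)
The plan is to exhibit $W=\mathbf{Psadj}(J-,\mathbf{c})$ as a connected, cellular cofibrant weight by running the colimit dual of the construction in Proposition \ref{Pseudoalgebras via cofibrant weighted limits}. The organising principle is that the weighted colimit functor $(-)\star\mathbb{S}$ is cocontinuous in the weight: it sends the representable $\mathbf{Psmnd}(-,\mathbf{m})$ to the diagram value $\mathcal{A}$, copowers to copowers, and pushouts to pushouts. Consequently a description of $W\star\mathbb{S}$ as an iterated pushout of copowers of diagram values, natural in $\mathbb{S}$, reflects back to a cell decomposition of $W$ itself: applied to the generic diagram, namely the $\mathbf{Gray}$-enriched Yoneda embedding for which $W\star(-)$ returns $W$ by the co-Yoneda lemma, such a description becomes a relative cell complex $\mathbf{0}\rightarrowtail W$ built from copowers of the representable. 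Dually to the $2$-cell inserters, iso-$3$-cell inserters and $3$-cell equifiers that assemble $U^S$ in Proposition \ref{Pseudoalgebras via cofibrant weighted limits}, the cells here are $2$-cell coinserters, iso-$3$-cell coinserters and $3$-cell coequifiers.

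Concretely I would read the cells off the generators-and-relations presentation of $\mathcal{A}_S=W\star\mathbb{S}$ recorded in Remark \ref{Gray Kleisli generators and relations}. Beginning from the representable $\mathbf{Psmnd}(-,\mathbf{m})$, whose weighted colimit is $\mathcal{A}$ with coprojection $F\colon\mathcal{A}\to\mathcal{A}_S$, one attaches in turn: an $i_2$-copower cell inserting the $1$-cells $\varepsilon_X$; three families of iso-$2$-cell cells of the form $k\cdot\mathbf{Psmnd}(-,\mathbf{m})$, each a composite of an $i_3$-cell and an $i_4$-cell by Lemma \ref{iso 3 cell inserter from 3 cell inserter and 3 cell equifier}, inserting the invertible generating $2$-cells $\varepsilon_f$, $u_X$ and $m_X$; and finally $i_4$-copower cells imposing the relations, namely the two coherence axioms making $\varepsilon$ pseudonatural, the modification conditions on $u$ and $m$, and the two non-derivable pseudoalgebra coherences (the right-unit and associativity axioms, the left-unit coherence being redundant by Remark \ref{Remark derivable law for pseudoalgebras}). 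Since the objects of $\mathcal{A}_S$ are precisely those of $\mathcal{A}$, no object-adjoining $i_1$-cells occur beyond the representable seed, so no coproduct cells appear and $W$ lands in the saturation of the weights for coinserters and coequifiers; this is exactly the connected cellular class identified in Remark \ref{general projective cofibrants, and relation to flexibility in Cat}. As the complex is finite, this yields connected, cellular cofibrancy.

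The main obstacle will be promoting the correspondence of the first paragraph from the level of realised colimits to the level of attaching maps, so that the cells are genuine pushouts of copowers of the single representable. The delicate feature is that an inserted cell's source and target are related by the right $\bar{\Delta}_{+}$-action: for instance $\varepsilon_X\colon SX\to X$ has source and target differing by the action of $S\in\bar{\Delta}_{+}$, so the corresponding $i_2$-cell is attached not along one copy of the representable but along the span selecting both the generating object and its translate under $S$ inside the free module $\mathbf{Psmnd}(-,\mathbf{m})$; similar care is needed for $\varepsilon_f$, $u$, $m$ and for each relational coequifier. I would verify that these attaching data assemble into honest pushouts of copowers of $\mathbf{Psmnd}(-,\mathbf{m})$ in $[\mathbf{Psmnd}^{\text{op}},\mathbf{Gray}]$, using only the generating cofibrations together with $k$, whose own cellularity is supplied by Lemma \ref{iso 3 cell inserter from 3 cell inserter and 3 cell equifier}. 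Once this bookkeeping is in place, connectedness is immediate from the absence of object-cells above the seed, and cofibrancy of $W$ follows because $\mathbf{0}\rightarrowtail\mathbf{Psmnd}(-,\mathbf{m})$ is a single generating cell.
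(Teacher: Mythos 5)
Your first paragraph is precisely the paper's argument: dualise Proposition \ref{Pseudoalgebras via cofibrant weighted limits}, run the dual construction on the pseudomonad in $[\mathbf{Psmnd}^\text{op},\mathbf{Gray}]$ given by the Yoneda embedding, and identify the output with $W$ by co-Yoneda; each stage is then a pushout of a copower of $i_{n}$ (or of $k$) by the representable $\mathbf{Psmnd}(-,\mathbf{m})$, that is, of a generating projective cofibration, giving cellularity, with connectedness following because no coproduct cells occur beyond the seed $i_{1}\cdot\mathbf{Psmnd}(-,\mathbf{m})$.

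The gap is in your second paragraph, where you ``read the cells off'' the presentation of Remark \ref{Gray Kleisli generators and relations}: this conflates the $2$-categorical generators-and-relations presentation of the \emph{realised} colimit $\mathcal{A}_{S}$ with the $\mathbf{Gray}$-enriched cell structure of the weight $W$, and the finer decomposition you propose is not implementable by projective cells. In $[\mathbf{Psmnd}^\text{op},\mathbf{Gray}]$ the copower is the pointwise Gray tensor product, so the single pushout of $i_{2}\cdot\mathbf{Psmnd}(-,\mathbf{m})$ along $[\mathbf{Psmnd}(-,\mathbf{1}),\mathrm{id}]$ already adjoins the \emph{entire} pseudonatural transformation $\varepsilon$ --- the $1$-cells $\varepsilon_{X}$, the invertible $2$-cells $\varepsilon_{f}$, and their pseudonaturality coherences --- in one cell; this is exactly what a $2$-cell coinserter does, dually to Definition \ref{cofibrantly weighted limits in Gray}. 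Likewise each pushout of $k\cdot\mathbf{Psmnd}(-,\mathbf{m})$ adjoins a whole invertible modification together with its modification condition. The correct complex therefore has exactly five stages beyond the seed: one $i_{2}$-cell for $\varepsilon$, two $k$-cells for $u$ and $m$ (each decomposing into generating cells by Lemma \ref{iso 3 cell inserter from 3 cell inserter and 3 cell equifier}), and two $i_{4}$-cells for the right-unit and associativity axioms. Your additional cells --- a $k$-cell ``for the $\varepsilon_{f}$'', and $i_{4}$-cells imposing pseudonaturality of $\varepsilon$ and the modification conditions of $u$ and $m$ --- do not exist as projective cells: those families are indexed by the \emph{morphisms} of the base, and since $\mathbf{Psmnd}$ has the single object $\mathbf{m}$, no copower of a representable can index them; moreover $\{\varepsilon_{f}\}$ is not a modification between two existing pseudonatural transformations, so $k\cdot\mathbf{Psmnd}(-,\mathbf{m})$ cannot attach it, and were such cells forced in, the result would carry duplicated generating $2$-cells and would not be $W$. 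This also dissolves the ``main obstacle'' of your third paragraph: there is no per-generator bookkeeping to promote, because the attaching maps are simply the cocone legs of the coinserter and coequifier diagrams dual to those of Definition \ref{cofibrantly weighted limits in Gray}, the enrichment supplying all naturality automatically.
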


\begin{proof}
	In the proof of Proposition \ref{Pseudoalgebras via cofibrant weighted limits} we described how the Eilenberg-Moore object can be constructed in $\mathbf{Gray}$ using pullbacks of powers of generating cofibrations. One implements the dual construction in $[\mathbf{Psmnd}^\text{op}, \mathbf{Gray}]$ to build $W$. In particular, start with the pseudomonad on the representable $\mathbf{Psmnd}^\text{op}\rightarrow \mathbf{Gray}$ determined by the Yoneda embedding $\mathbf{Psmnd}\rightarrow [\mathbf{Psmnd}^\text{op}, \mathbf{Gray}]$, in place of the pseudomonad $\left(\mathcal{A}, S\right)$ in $\mathbf{Gray}$. Trace through the same construction replacing every pullback with the corresponding pushout, and every instance of a power by $i_{n}, n \in \{2, 3, 4\}$ (or power by $k$) with a copower by $i_{n}$ (resp. or copower by $k$).
\end{proof}

\begin{remark}\label{weight for lax algebras and pseudo maps is projective cofibrant}
	We observe that the forgetful $2$-functor from the $2$-category of \emph{lax} algebras, pseudomorphisms and $2$-cells is also possible to construct as a composite of pullbacks of powers of cofibrations. One replaces every instance of $k$ with $i_{3}$, and performs an additional $3$-cell equifier for the left unit coherence described in Remark \ref{Remark derivable law for pseudoalgebras}, as this is no longer derivable in the lax case. Performing the dual construction starting with $\mathbf{Psmnd} \rightarrow [\mathbf{Psmnd}^\text{op}, \mathbf{Gray}]$ in $[\mathbf{Psmnd}^\text{op}, \mathbf{Gray}]$ one finds that the corresponding weight $W'$ is also cofibrant. The weighted colimit of $\left(\mathcal{A}, S\right)$ weighted by this new weight $W'$ has a generators and relations presentation similar to $\mathcal{A}_{S}$, except that the $2$-cells $u_{X}$ and $m_{X}$ are no longer required to be invertible and the right unit coherence needs to be specified as it is no longer derivable. The limit variant of $W'$ produces the $2$-category of \emph{lax algebras}, pseudomorphisms and $2$-cells.
\end{remark}

\section{Conclusion}

\noindent In Subsection \ref{Section definition and presentation of Gray enriched Kleisli objects for pseudomonads} we defined a notion of Kleisli object of a pseudomonad in terms of a $\mathbf{Gray}$-enriched weighted colimit. Applying the enriched colimit construction corresponding to this weight to a pseudomonad $\left(\mathcal{A}, S\right)$ in $\mathbf{Gray}$ yields the $2$-category $\mathcal{A}_{S}$ which is neither the Kleisli bicategory nor the $2$-category of free pseudoalgebras. Rather, $\mathcal{A}_{S}$ can be described via a presentation in terms of generators and relations, in which each object $X \in \mathcal{A}$ has been equipped with a pseudoalgebra structure, each map has been equipped with a pseudomorphism structure, and each $2$-cell forms a $2$-cell of pseudoalgebras (Subsection \ref{Subsection the presentation of the Kleisli object of a pseudomonad}). The canonical comparison $2$-functor from this $\mathbf{Gray}$-enriched Kleisli object $\mathcal{A}_{S}$ to any other pseudoadjunction $L \dashv R$ splitting the original pseudomonad has been shown to be bi-fully faithful in Theorem \ref{comparison from Gray Kleisli is bi-fully-faithful}. In particular, the comparison to the Eilenberg-Moore object to pseudoalgebras also allows for a simpler description of the $2$-cells in $\mathcal{A}_{S}$ in terms of $2$-cells of pseudoalgebras. This is described in Corollary \ref{2-cells in A_S are 2-cells of free pseudoalgebras}.
\\
\\
\noindent As per Theorem \ref{when is comparison from gray kleisli a biequivalence}, the comparison is a weak biequivalence if and only if the left pseudoadjoint $L$ is biessentially surjective on objects. Moreover, this is also logically equivalent to an `up to biequivalence' version of the universal property for $\mathbf{Gray}$-enriched Kleisli objects for pseudomonads. The property of being biessentially surjective on objects is closed under composition of $2$-functors, while in contrast $\mathbf{Gray}$-enriched Kleisli left pseudoadjoints are not closed under composition as recorded in Corollary \ref{Gray Kleisli not closed under composition}. Closure under composition of Kleisli left adjoints is an essential ingredient in the explicit description of free cocompletions of $2$-categories under Kleisli objects of monads, as developed in \cite{FTM2}. With these goals in mind, Section \ref{Section Homotopy Theoretic Aspects} established nice homotopical properties for this weight, such as projective cellular cofibrancy. As we will discuss in the forthcoming \cite{Miranda Tricategorical Limits and Colimits}, projective cofibrancy of the weight means that $\mathbf{Gray}$-enriched Kleisli objects also enjoy a tricategorical universal property. 

\section{Appendix}

\subsection{Explicit biequivalence inverse to the comparison}\label{Appendix explicit inverse to comparison}

\noindent Let $K: \mathcal{A}_{S} \rightarrow \mathcal{B}$ be the comparison $2$-functor of Theorem \ref{comparison from Gray Kleisli is bi-fully-faithful}, consider the bijective on objects- $2$-fully faithful factorisation of $K$ and denote its bijective on objects part by $C: \mathcal{A}_S \rightarrow \mathcal{B}'$. By Theorem \ref{comparison from Gray Kleisli is bi-fully-faithful}, the $2$-functor $C$ is a weak biequivalence and hence underlies a biadjoint biequivalence in $\mathbf{Bicat}$. Proposition \ref{D pseudofunctor pseudoinverse to C} describes this biadjoint biequivalence explicitly. This involves giving data as listed below.

\begin{itemize}
	\item A pseudofunctor $D: \mathcal{B}' \rightarrow \mathcal{A}_S$, described in part (1),
	\item Two invertible icons $\gamma: DC \rightarrow 1_{\mathcal{A}_{S}}$ and $\beta: 1_{\mathcal{B}'} \rightarrow CD$, described in parts (2) and (3) respectively,
	\item An invertible modification $u: D\left(\beta\right).\gamma_{D} \Rrightarrow 1_{D}$, described in part (4).
\end{itemize}

\begin{proposition}\label{D pseudofunctor pseudoinverse to C}
	\hspace{1mm}
	\begin{enumerate}
		\item There is an identity on objects pseudofunctor $D: \mathcal{B}' \rightarrow \mathcal{A}_S$ which
		
		\begin{itemize}
			\item sends $p:\bar{F}X \rightarrow \bar{F}Y$ to $\varepsilon_{Y}.Up.\eta_{X}$,
			\item sends $\phi: p \Rightarrow q$ to $\varepsilon_{Y}.\phi.\eta_{X}$,
			\item has unitor $D_{X}: 1_{DX} \rightarrow D\left(1_{X}\right)$ given by $u_{X}$
			\item has compositor $D_{q, p}: DqDp \Rightarrow D\big(q.p\big)$ on \begin{tikzcd}X \arrow[r, "p"] &Y \arrow[r, "q"] & Z\end{tikzcd} given by the pasting depicted below, in which we note that $\mu = U{\bar{\varepsilon}}_{\bar{F}}$.
		\end{itemize}		
		$$\begin{tikzcd}[font=\fontsize{9}{6}]
			X \arrow[rr,"{\eta}_{X}"]
			&& SX \arrow[rr,"Up"]
			&& SY \arrow[rr,"{\varepsilon}_{Y}"]
			\arrow[rrdd,"S{\eta}_{Y}"] \arrow[dddd,"{\text{id}}_{SY}"'{name=A}, bend right]
			&& UY \arrow[rrdd,"{\eta}_{Y}"] \arrow[dd,"{\varepsilon}_{{\eta}_{Y}}",Rightarrow,shorten=3mm]
			\\
			\\
			&&&&&& {S^2}Y \arrow[rr,"{\varepsilon}_{SY}"] \arrow[dd,"SUq"'{name=C}] \arrow[ddll,"{\mu}_{Y}"'{name=B}]
			&& SY \arrow[dd,"Uq"] \arrow[lldd,"{\varepsilon}_{Uq}",Rightarrow,shorten=5mm]
			\\
			\\
			&&&& SY \arrow[rrdd,"Uq"'{name=D}]
			&& {S^2}Z \arrow[rr,"{\varepsilon}_{SZ}"] \arrow[dd,"{\mu}_{Z}"']
			&& SZ \arrow[dd,"{\varepsilon}_{Z}"] \arrow[lldd,Rightarrow,shorten=5mm,"m_{Z}"]
			\\
			\\
			&&&&&& SZ \arrow[rr,"{\varepsilon}_{Z}"']
			&& Z \arrow[from=B,to=A,"{\rho}_{Y}",Rightarrow,shorten=0.5mm,shift right=12] \arrow[from=C,to=D,Rightarrow,"U\bar{\varepsilon}_{q}"',shorten=5mm, near start]
		\end{tikzcd}$$
		\item There is an invertible icon $\gamma: D\circ C \rightarrow 1_{\mathcal{A}_{S}}$ with components on \begin{itemize}
			\item Generating maps $f: X \rightarrow Y$ from $\mathcal{A}\left(X, Y\right)$ given by either of the following equal $2$-cells.
			$$\begin{tikzcd}[font=\fontsize{9}{6}]
				X \arrow[rr,"{\eta}_X"] \arrow[rrdd,bend right,"{\text{id}}_X"'{name=A}]
				&&
				SX \arrow[to=A,Rightarrow,"u_X",shorten=3mm] \arrow[dd,"{\varepsilon}_X"] \arrow[rr,"Sf"]
				&&
				SY \arrow[lldd,Rightarrow,shorten=3mm,"{\epsilon}_f"] \arrow[dd,"{\varepsilon}_Y"]
				\\
				\\
				&& X \arrow[rr,"f"']
				&& Y
			\end{tikzcd} = \begin{tikzcd}[font=\fontsize{9}{6}]
				SX \arrow[rr,"Sf"] \arrow[rrdd,Rightarrow,"{\eta}_f",shorten=5mm]
				&& SY \arrow[rr,"{\varepsilon}_Y"{name=A}]
				&& Y
				\\
				\\
				X \arrow[uu,"{\eta}_X"]
				\arrow[rr,"f"']
				&& Y \arrow[rruu,bend right,"{\text{id}}_Y"'{name=B}] \arrow[uu,"{\eta}_Y"']
				\arrow[from=A,to=B,Rightarrow,"u_Y",shorten=2mm,shift right=2]
			\end{tikzcd}$$
			\item Generating maps $\varepsilon_{X}: SX \rightarrow X$ given by either of the following equal $2$-cells.
			
			$$\begin{tikzcd}[font=\fontsize{9}{6}]
				SX \arrow[r,"{\eta}_{SX}"] \arrow[rr,bend right=60,"{\text{id}}_{SX}"'{name=A}]
				& {S^2}X \arrow[to=A,Rightarrow,"{\lambda}_X"] \arrow[r,"{\mu}_X"]
				& SX \arrow[r,"{\varepsilon}_X"]
				& X
			\end{tikzcd} = \begin{tikzcd}[font=\fontsize{9}{6}]
				SX \arrow[rr,"{\eta}_{SX}"] \arrow[rrdd,bend right,"{\text{id}}_{SX}"'{name=A}]
				&&
				{S^2}X \arrow[to=A,Rightarrow,"u_{SX}",shorten=3mm] \arrow[dd,"{\varepsilon}_{SX}"] \arrow[rr,"{\mu}_X"]
				&&
				SX \arrow[lldd,Rightarrow,shorten=3mm,"m_X"] \arrow[dd,"{\varepsilon}_X"]
				\\
				\\
				&& SX \arrow[rr,"{\varepsilon}_X"']
				&& X
			\end{tikzcd}$$
			
			\item An arbitrary composite $gf$ in $\mathcal{A}_S$ given by \begin{tikzcd} DC\left(gf\right) \arrow[rr, "D_{Cg {,}Cf}"] && DCg.DCf \arrow[rr, "\gamma_{g}.\gamma_{f}"] && gf \end{tikzcd}
		\end{itemize} 
		\item There is an invertible icon $\beta: 1_{\mathcal{B}'} \rightarrow C\circ D$ with component at $p: X \rightarrow Y$ given by the $2$-cell determined by the following pasting in $\mathcal{B}'$.
		
		$$\begin{tikzcd}[font=\fontsize{9}{6}]
			\bar{F}X \arrow[rr,"\bar{F}{\eta}_X"{name=A}] \arrow[rrdd,"{\text{id}}_{\bar{F}X}"'{name=B},bend right]
			&& \bar{F}U\bar{F}X \arrow[rr,"\bar{F}Up"] \arrow[dd,"\bar{\varepsilon}_{\bar{F}X}"']
			&& \bar{F}U\bar{F}Y \arrow[dd,"\bar{\varepsilon}_{\bar{F}Y}"] \arrow[lldd,"\bar{\varepsilon_p}",Rightarrow,shorten=5mm]
			\\
			\\
			&& \bar{F}X \arrow[rr,"p"']
			&& \bar{F}Y \arrow[from=A,to=B,Rightarrow,shorten=3mm,"\bar{u}_X",shift left=5]
		\end{tikzcd}$$
		\item There is an invertible modification $\Omega: D\left(\beta\right).\gamma_{D} \Rrightarrow 1_{D}$ whose component on $X$ is given by $u_{X}$.
		\item There is an equality of icons $\beta_{C}.C\left(\gamma\right) = 1_{C}$
		\item The data described in parts (1)-(5) form a biequivalence internal to $\mathbf{Bicat}$.
	\end{enumerate}
\end{proposition}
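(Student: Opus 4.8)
The plan is to verify each of the six parts in order, exploiting two features of the situation throughout. First, $\mathcal{A}_S$ is presented by generators and relations (Remark \ref{Gray Kleisli generators and relations}), so any structure landing in $\mathcal{A}_S$ may be specified on the generating $1$- and $2$-cells and then checked against the relations. Second, the domain data is controlled by the splitting pseudoadjunction $\bar F \dashv \bar U$, so that pastings in $\mathcal{B}$ (equivalently in $\mathcal{B}'$) can be manipulated using pseudonaturality of $\bar\varepsilon$, the triangulator $\bar u$, and the identity $\mu = U\bar\varepsilon_{\bar F}$ recorded in part (1). The coherence theorem for pseudomonads and pseudoalgebras of \cite{Coherent Approach to Pseudomonads} will reduce each pasting identity to a manageable calculation.

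First I would establish part (1), that $D$ is a pseudofunctor. The unitor $D_X = u_X$ and the compositor $D_{q,p}$ are invertible, being pastings of invertible constraint $2$-cells, and the compositor is natural in both arguments by naturality of the pseudoadjunction constraints. The two unit coherences for $D$ reduce, after cancelling $\eta$ and $\varepsilon$ against the relevant modification components, to the right and left unit axioms for the pseudoalgebra $(F,\varepsilon,u,m)$; here the \emph{derivable} left unit coherence of Remark \ref{Remark derivable law for pseudoalgebras} is exactly what is needed. The associativity coherence for $D_{q,p}$ is the substantial pasting identity, and I expect this to be the main obstacle: it unfolds into a diagram whose cells are components of $m$, of the pseudonaturality constraints $\varepsilon_{(-)}$, and of $\alpha$, $\mu_\phi$, $\rho$, made to commute by the associativity coherence for $(F,\varepsilon,u,m)$ together with the modification conditions for $m$. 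The combinatorics here are close to those of the pentagon verification flagged in Remark \ref{combinatorics of Kleisli arrows and composition}.

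Next I would treat the two icons, parts (2) and (3). For $\gamma: DC \to 1_{\mathcal{A}_S}$, I would first check that the two displayed pastings defining $\gamma_f$ agree (a unit pseudomorphism axiom for $(F,\varepsilon,u,m)$), and likewise that the two pastings defining $\gamma_{\varepsilon_X}$ agree (using the left unit law of Remark \ref{Remark derivable law for pseudoalgebras}). One then verifies the icon axioms: compatibility with identities is forced by $D_X = u_X$, compatibility with composition is built into the stated value of $\gamma$ on composites, and icon naturality need only be checked on the generating $2$-cells $\varepsilon_f$, $u_X$, $m_X$, where it again reduces to the pseudoalgebra axioms. Since these checks only involve generators and relations, $\gamma$ is thereby determined on all of $\mathcal{A}_S$. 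For $\beta: 1_{\mathcal{B}'} \to CD$, whose domain is an honest $2$-category, I would verify the icon axioms directly from the displayed pasting using pseudonaturality of $\bar\varepsilon$ and the triangle constraint $\bar u$.

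Finally, parts (4)--(6) assemble the biadjoint biequivalence. Part (4) requires only the single modification axiom for $\Omega$ (components $u_X$), and part (5) is a direct computation on generators showing the composite $C \xrightarrow{\beta C} CDC \xrightarrow{C\gamma} C$ is strictly $1_C$. With $\beta$ and $\gamma$ both invertible, one triangle holding strictly (part (5)) and the other up to the invertible modification $\Omega$ (part (4)), it remains to verify the single swallowtail coherence relating $\Omega$ to this strict triangle; since all cells involved are invertible this is a finite check, and its validity is in any case guaranteed by Theorem \ref{comparison from Gray Kleisli is bi-fully-faithful}, which shows $C$ is a weak biequivalence and hence underlies some biadjoint biequivalence in $\mathbf{Bicat}$. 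The explicit data of parts (1)--(5) realise one such structure.
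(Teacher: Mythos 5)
Your proposal is correct in outline, but it takes a genuinely different route from the paper, because the paper deliberately gives \emph{no} proof of this proposition. Immediately after stating it, the paper says the explicit description is not needed for any of its results, and justifies the statement by appeal to Theorem \ref{comparison from Gray Kleisli is bi-fully-faithful}: since $C$ is bijective on objects and bi-fully faithful, it is a weak biequivalence, hence underlies a biadjoint biequivalence in $\mathbf{Bicat}$, and the displayed data merely specialises the general recipe for constructing such an inverse. Your plan instead verifies the displayed data directly against the presentation of $\mathcal{A}_{S}$ from Remark \ref{Gray Kleisli generators and relations}: pseudofunctoriality of $D$, the icon axioms for $\gamma$ and $\beta$ checked on generators, the modification axiom for $\Omega$, and the strict triangle. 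What the paper's approach buys is brevity and a clean reduction to an already-proved theorem; what yours buys is a self-contained confirmation that the \emph{specific} cells listed in parts (1)--(5) really do assemble into the biequivalence, something the paper asserts but never checks. Your strategy of reducing every identity to the imposed relations (pseudonaturality of $\varepsilon$, the modification conditions for $u$ and $m$, the unit and associativity coherences) is the right one and should go through.

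Two small corrections to your plan. First, the equality of the two pastings defining $\gamma_{\varepsilon_{X}}$ in part (2) is not an instance of the derivable left unit law of Remark \ref{Remark derivable law for pseudoalgebras}: that law involves $S\eta_{X}$, $\rho_{X}$ and $\varepsilon_{\eta_{X}}$, whereas the displayed pastings involve $\eta_{SX}$, $\lambda_{X}$, $u_{SX}$ and $m_{X}$. The required equality is precisely the component at $X$ of the \emph{imposed} right unit coherence of the presentation, so this step is easier than you suggest. Second, your closing appeal to Theorem \ref{comparison from Gray Kleisli is bi-fully-faithful} to dispatch the swallowtail is imprecise as stated: that theorem guarantees that \emph{some} biadjoint biequivalence exists, not that your particular $\Omega$ satisfies the swallowtail identity. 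However, part (6) as stated only claims a biequivalence internal to $\mathbf{Bicat}$, and for that the invertibility of the icons $\gamma$ and $\beta$ already suffices (they are equivalences in the relevant hom-bicategories), so the swallowtail can either be verified directly or bypassed; if you want the full biadjoint biequivalence you should instead cite the general result that biequivalence data can always be improved to biadjoint biequivalence data by modifying one triangulator. You should also note explicitly that $\gamma$ is well defined on $1$-cells: when a composite of generators from $\mathcal{A}$ is again a morphism of $\mathcal{A}$, the value prescribed by the composition rule in the third bullet must agree with the value prescribed directly, which is a short check using naturality of the constraint data.
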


\noindent We do not give a proof for Proposition \ref{D pseudofunctor pseudoinverse to C} as the explicit description of the biequivalence inverse that it gives is not needed for any of the proofs in this paper. By Theorem \ref{comparison from Gray Kleisli is bi-fully-faithful} a biequivalence inverse indeed must exist, and we have just specialised the general way in which such biequivalence inverses can be explicitly constructed.

\subsection{Proof of Lemma \ref{iso 3 cell inserter from 3 cell inserter and 3 cell equifier}}\label{Proof of Lemma 3-cell inserter}
\begin{proof}
	Observe that $k$ is the composite of the middle column in the diagram below, in which all squares are pushouts and each $i_{n}$ for $n \in \{3, 4\}$ is a generating cofibration in the Lack model structure on $2$-$\mathbf{Cat}$, recalled in Example \ref{Example Lack model structure}. Since cofibrations are stable under pushout (Corollary 1.1.11 of \cite{Hovey Model Categories}) and closed under composition, it follows that $k$ is also a cofibration. Note that data appearing in black are objects, morphisms and $2$-cells \textit{in} certain $2$-categories, while data appearing in \color{red}{red}\color{black} \hspace{1mm} are $2$-functors \textit{between} these $2$-categories.
	
	\begin{tikzcd}[column sep=12, row sep = 15, font=\fontsize{9}{6}]
		&&&&&&&&&&
		X
		\arrow[rrrr,bend left, "f"{name=A}]
		\arrow[rrrr,bend right,"g"'{name=B}]
		&&
		{} \arrow[dddddd,shorten=30,red,"i_3"]
		&&
		Y
		\\
		\\
		\\
		\\
		\\
		\\
		X
		\arrow[rrrr,bend left, "f"{name=A}]
		\arrow[rrrr,bend right,"g"'{name=B}]
		&&
		{}
		\arrow[dddddd,shorten=30,red,"i_3"']
		&&
		Y
		\arrow[rrrrrr,shorten=8,red,"i_3"]
		&&&&&&
		X
		\arrow[rrrr,bend left, "f"{name=A}]
		\arrow[rrrr,bend right,"g"'{name=B}]
		&&
		{}
		\arrow[dddddd,shorten=38,red]
		&&
		Y
		\arrow[from=A,to=B,Rightarrow,"\alpha"'{name=C},shorten=5]
		\\
		\\
		\\
		\\
		\\
		\\
		X
		\arrow[rrrr,bend left, "f"{name=A}]
		\arrow[rrrr,bend right,"g"'{name=B}]
		&&
		{}
		&&
		Y
		\arrow[rrrrrr,red,shorten=8]
		\arrow[from=A,to=B,Leftarrow,"\alpha^{-1}"'{name=D},shift left=2,shorten=5]
		&&&&&&
		X
		\arrow[rrrr,bend left=40, "f"{name=A}]
		\arrow[rrrr,bend right=40,"g"'{name=B}]
		&&
		{}
		\arrow[dddddd,shorten=34,red]
		&&
		Y
		\arrow[from=A,to=B,Rightarrow,"\alpha"'{name=C},shift right=3,shorten=5]
		\arrow[from=A,to=B,Leftarrow,"\alpha^{-1}"{name=D},shift left=2,shorten=5]
		&&&&&&
		X
		\arrow[rrrr,bend left=40, "f"{name=A}]
		\arrow[rrrr,bend right=40,"f"'{name=B}]
		\arrow[llllll,red,shorten=8]
		&&
		{}
		\arrow[dddddd,shorten=34,red,"i_4"]
		&&
		Y
		\arrow[from=A,to=B,Rightarrow,"\alpha^{-1}\alpha"'{name=C},shift right=2,shorten=5]
		\arrow[from=A,to=B,Rightarrow,"1_{f}"{name=D},shift left=5,shorten=5]
		\\
		\\
		\\
		\\
		\\
		\\
		X
		\arrow[rrrr,bend left=40, "g"{name=A}]
		\arrow[rrrr,bend right=40,"g"'{name=B}]
		&&
		{}
		\arrow[dddddd,red,shorten=34,"i_4"']
		&&
		Y
		\arrow[rrrrrr,red,shorten=8]
		\arrow[from=A,to=B,Rightarrow,"\alpha\alpha^{-1}"'{name=C},shift right=2,shorten=5]
		\arrow[from=A,to=B,Rightarrow,"1_{g}"{name=D},shift left=5,shorten=5]
		&&&&&&
		X
		\arrow[rrrr,bend left=40, "f"{name=A}]
		\arrow[rrrr,bend right=40,"g"'{name=B}]
		&&
		{}
		\arrow[dddddd,shorten=34,red]
		&&
		Y
		\arrow[from=A,to=B,Rightarrow,"\alpha"'{name=C},shift right=4,shorten=5]
		\arrow[from=A,to=B,Leftarrow,"\alpha^{-1}"{name=D},shift left=2,shorten=5]
		&&&&&&
		X
		\arrow[rrrr,bend left=40, "f"{name=A}]
		\arrow[rrrr,bend right=40,"f"'{name=B}]
		\arrow[llllll,shorten=8,red]
		&&
		=
		&&
		Y
		\arrow[from=A,to=B,Rightarrow,"\alpha^{-1}\alpha"'{name=C},shift right=2,shorten=5]
		\arrow[from=A,to=B,Rightarrow,"1_{f}"{name=D},shift left=3,shorten=5]
		\\
		\\
		\\
		\\
		\\
		\\
		X
		\arrow[rrrr,bend left=40, "g"{name=A}]
		\arrow[rrrr,bend right=40,"g"'{name=B}]
		&&
		=
		&&
		Y\arrow[rrrrrr, red]
		\arrow[from=A,to=B,Rightarrow,"\alpha\alpha^{-1}"'{name=C},shift right=3,shorten=5]
		\arrow[from=A,to=B,Rightarrow,"1_{g}"{name=D},shift left=4,shorten=5]
		&&&&&&
		X
		\arrow[rrrr,bend left=40, "f"{name=A}]
		\arrow[rrrr,bend right=40,"g"'{name=B}]
		&&
		\cong_{\alpha}
		&&
		Y
	\end{tikzcd}	
\end{proof}

\end{document}